\DeclareMathAlphabet{\mathpzc}{OT1}{pzc}{m}{it}
\renewcommand\@seccntformat[1]{\csname the#1\endcsname.\quad}
\renewcommand\numberline[1]{\hb@xt@\@tempdima{#1.\hfil}}
\newcommand{\define}[1]{\textbf{#1}}
\newcommand{\sheaf}[1]{#1}
\newcommand{\cosheaf}[1]{\widehat{#1}}
\theoremstyle{definition}
\newtheorem{thm}{Theorem}[section]
\newtheorem{lem}[thm]{Lemma}
\newtheorem{defn}[thm]{Definition}
\newtheorem{cor}[thm]{Corollary}
\newtheorem{prop}[thm]{Proposition}
\newtheorem{ex}[thm]{Example}
\newtheorem{rmk}[thm]{Remark}
\renewcommand{\sfdefault}{iwona}
\DeclareMathAlphabet{\mathbfsf}{\encodingdefault}{\sfdefault}{bx}{n}
\newcommand{\dd}{\dagger}
\newcommand{\op}{\mathsf{op}}
\newcommand{\Hom}{\mathrm{Hom}}
\newcommand{\Set}{\mathbfsf{Set}}
\newcommand{\Vect}{\mathbfsf{Vect}}
\newcommand{\vect}{\mathbfsf{vect}}
\newcommand{\Fun}{\mathbfsf{Fun}}
\newcommand{\Open}{\mathbfsf{Open}}
\newcommand{\Closed}{\mathbfsf{Closed}}
\newcommand{\Shv}{\mathbfsf{Shv}}
\newcommand{\Coshv}{\mathbfsf{CoShv}}
\newcommand{\Alex}{\mathbfsf{Alex}}
\newcommand{\cat}{\mathbfsf{C}}
\newcommand{\dat}{\mathbfsf{D}}
\newcommand{\equivpc}{\mathcal{P}}
\newcommand{\covU}{\mathcal{U}}
\newcommand{\hG}{\widehat{G}}
\newcommand{\equivp}{\widehat{\mathcal{P}}}
\newcommand{\skycshv}{\widehat{S}}
\newcommand{\Lan}{\mathrm{Lan}}
\newcommand{\Ran}{\mathrm{Ran}}
\newcommand{\id}{\mathrm{id}}
\newcommand{\colim}{\varinjlim}
\newcommand{\cok}{\mathrm{cok}}
\newcommand{\im}{\mathrm{im}}
\newcommand{\supp}{\mathrm{supp}}
\newcommand{\RR}{\mathbb{R}}
\newtheoremstyle{justin}
{7pt}
{7pt}
{}
{7pt}
{\bf}
{:}
{.5em}
{}
\newcommand{\squigrightarrow}{\rightsquigarrow}
\begin{document}

\title{Dualities between Cellular Sheaves and Cosheaves}

\author{Justin Michael Curry}


\maketitle

\begin{abstract}
This paper affirms a conjecture of MacPherson: that the derived category of cellular sheaves is equivalent to the derived category of cellular cosheaves.
We give a self-contained treatment of cellular sheaves and cosheaves and note that certain classical dualities give rise to an exchange of sheaves with cosheaves.
Following a result of Pitts that states that cosheaves are cocontinuous functors on the category of sheaves, we use the derived equivalence provided here to gain a novel description of compactly supported sheaf cohomology.
\end{abstract}

\section{Introduction}

The utility of sheaf theory in mathematics is manifest.
What is less obvious is the utility of cosheaves, where data is assigned to open sets in a covariant way, i.e.~a cosheaf assigns to a pair of open sets $U\subseteq V$ a pair of objects and a morphism between them $\cosheaf{F}(U)\to\cosheaf{F}(V)$.
The relative absence of cosheaves from the literature is surprising since the simplest possible topological invariant, $\pi_0$, is a cosheaf~\cite{de2016categorified,woolf}.
Moreover, whenever we are given a sheaf $F$ the association that takes an open set to the derived group of compactly supported sections is also a cosheaf~\cite{borel1960homology,dt-lag}:
\[
U \qquad \squigrightarrow \qquad R\Gamma_c(U;F)
\]
This paper proves that in the special case of cellular sheaves and cosheaves the above association establishes a derived equivalence of categories, thereby proving a conjecture of Robert MacPherson. We note that similar results have been obtained by Schneider~\cite{schneider-vd} and Lurie~\cite{lurie-VD}. We now detail the contents of the paper.

Because of the sparse literature on cosheaves, a brief treatment of sheaves and cosheaves over a general topological space is given in Section~\ref{sec:general-sheaves}.
Thinking of an open cover as a functor from the nerve allows us to phrase the (co)sheaf axiom simply as commuting with (co)limits of this type.
Moreover, since the association of an element of the nerve with its corresponding open set is order-reversing, one is prepared for the apparent ``wrong direction'' of the arrows that appear in the definition of a cellular (co)sheaf.

In Section~\ref{sec:sheaves-on-posets}, we restrict our attention to sheaves and cosheaves over posets equipped with the Alexandrov topology.
The use of Kan extensions in this section is integral, and the association of elements to closed sets provides a topological explanation for one of the other pushforward operations that is defined for sheaves on posets in Section~\ref{subsec:open-pushforward}.

Cellular sheaves and cosheaves are introduced in Section~\ref{sec:cell-sheaves}.
Note that by using the classification result detailed in~\cite{curry-patel-CCC}, cellular (co)sheaves are equivalent to constructible (co)sheaves where the stratification is given by a cell structure.
However, the perspective used here is that cellular (co)sheaves are simply (co)sheaves on the face-relation poset of a cell complex.
Even with this more narrow interpretation, our duality result allows us to say that the face-relation poset of a cell complex is derived equivalent to its opposite poset, thus extending the results of Ladkani~\cite{ladkani2008derived}.

A description of projective and injective objects for these categories is given in Section~\ref{sec:derived_sheaf_cohom}. Although in general the category of sheaves does not have enough projectives (cf. Section~\ref{subsec:no-proj}), in the cellular setting there are both enough injectives and enough projectives.
This property is exploited in Section~\ref{sec:derived-homology} to define four homology theories, some of which are related through duality.
The derived perspective is also used to prove that these theories are invariant under subdivision.

The proof of the derived equivalence is given in Section~\ref{sec:derived-equivalence}, where an explicit formula is given that takes in a cellular sheaf and produces a complex of cellular cosheaves.
By applying linear duality, one obtains the formula for Verdier duality given by Shepard~\cite{shepard}.
Some apparently classical results are proved equite easily using this formulation of duality.

Section~\ref{sec:coend} concludes the paper with a categorical justification for the study of cosheaves. Using the derived equivalence proved here, compactly supported cellular sheaf cohomology can be described explicitly as a coend, or tensor product, of a sheaf with a particular complex of cosheaves.

\section{Sheaves and Cosheaves in General}
\label{sec:general-sheaves}

For this section, we assume that $X$ is a topological space and that $\dat$ is a category with enough limits and colimits. 
The collection of open sets of $X$ is denoted $\Open(X)$, which is a category with open sets for objects and inclusions for morphisms.
A collection of open sets $\covU:=\{U_i\}_{i\in\Lambda}$ covers an open set $U$ if $\cup_{i\in\Lambda}U_i=U$. 
A cover can alternatively be thought of as a functor from a particular category to $\Open(X)$ whose colimit is $U$, as we now explain.

\begin{defn}
Suppose $\covU:=\{U_i\}_{i\in \Lambda}$ is an open cover of $U$. 
The \define{nerve} of the cover $\covU$ is a category $N(\covU)$, whose objects are finite subsets 
\[
I=\{i_0,\ldots,i_n\} \subseteq \Lambda \qquad \text{such that} \qquad U_I:=U_{i_0}\cap\cdots\cap U_{i_n}\neq\emptyset,
\] 
with a unique arrow from $I\to J$ if and only if $I\subseteq J$.
\end{defn}

\begin{rmk}
Since we only consider finite subsets $I\subseteq \Lambda$ the corresponding sets $U_I$ are open. 
The association of $I$ to $U_I$ is order-reversing. 
This implies that we have the following functor and its formal dual:
\[
\iota_{\covU}:N(\covU)^{\op}\to\Open(X) \qquad \text{and} \qquad \iota_{\covU}^{\op}:N(\covU)\to \Open(X)^{\op}
\]
Note that $\covU$ covers $U$ if and only if the colimit of $\iota_{\covU}$ is $U$ and the limit of $\iota_{\covU}^{\op}$ is $U$. 
\end{rmk}

\begin{defn}
Any functor $\sheaf{F}:\Open(X)^{\op}\to\dat$ is a \define{presheaf}. We refer to the morphism $\rho_{V,U}^F:F(U)\to F(V)$ associated to the inclusion $U\subseteq V$ as the \define{restriction map} from $V$ to $U$.
Dually, any functor $\cosheaf{F}:\Open(X)\to\dat$ is a \define{precosheaf}.
The morphism $r^{\cosheaf{F}}_{U,V}:\cosheaf{F}(V)\to\cosheaf{F}(U)$ associated to the inclusion $U\subseteq V$ is the \define{extension map} from $U$ to $V$.
We often drop the superscript for convenience.
\end{defn}

\begin{ex}[Leray Presheaves]\label{ex:Leray-presheaves}
Suppose $f:Y\to X$ is a continuous map. 
The functor
\[
U\subseteq X \qquad \squigrightarrow \qquad H^i(f^{-1}(U);\Bbbk),
\]
where $H^i(-;\Bbbk)$ is singular cohomology in degree $i$ with coefficients in a field $\Bbbk$, is a presheaf of vector spaces.
These are the \define{Leray presheaves} of $f$. 
\end{ex}

\begin{ex}[Supported Functions]\label{ex:supp-precosheaves}
The functor
\[
U\subseteq X \qquad \squigrightarrow \qquad \{f:U\to\RR\,|\, \text{supp}(f)\subset U \},
\]
where $\supp(f)$ is the closure of the set of points where $f$ is non-zero within $X$, is a precosheaf of sets.
If $f$ is supported on $U$, then it is also supported on any open set $V$ containing $U$, via extension by zero.
\end{ex}

\begin{defn}[Sheaf and Cosheaf Axioms]
Let $\sheaf{F}$ and $\cosheaf{F}$ denote a presheaf and a precosheaf, respectively.
We say $\sheaf{F}$ is a \define{sheaf} if it commutes with limits of type $\iota_{\covU}^{\op}$, i.e.~the universal arrow
\[
\sheaf{F}(U)=\sheaf{F}(\varprojlim \iota^{\op}_{\covU}) \longrightarrow \varprojlim (\sheaf{F}\circ\iota_{\covU}^{\op})=:F[\covU]
\]
is an isomorphism for every possible cover $\covU$.
Dually, we say $\cosheaf{F}$ is a \define{cosheaf} if it commutes with colimits of type $\iota_{\covU}$, i.e.~the universal arrow
\[
\cosheaf{F}[\covU]:=\varinjlim (\cosheaf{F}\circ \iota_{\covU}) \longrightarrow \cosheaf{F}(\varinjlim \iota_{\covU})=\cosheaf{F}(U)
\]
is an isomorphism for every possible cover $\covU$.
If the above maps are isomorphisms for a particular cover $\covU$, we say it is a (co)sheaf on the cover.
\end{defn}

\begin{ex}[Sheaf of Sections]
Given a continuous map $f:Y\to X$, the functor
\[
U\subseteq X \qquad \squigrightarrow \qquad \{s:U\to Y \,|\,f\circ s=\id_U\},
\]
is the canonical example of a sheaf of sets.
It is called the \define{sheaf of sections} of $f$.
\end{ex}

\begin{ex}[Reeb Cosheaf]
Given a continuous map $f:Y\to X$, the functor
\[
U\subseteq X \qquad \squigrightarrow \qquad \pi_0(f^{-1}(U)),
\]
where $\pi_0$ is the set of path components, is the canonical example of a cosheaf of sets.
It is called the \define{Reeb cosheaf} of $f$.
\end{ex}

\begin{ex}[Need of Sheafification]
This is a continuation of Example~\ref{ex:Leray-presheaves}.
The functor
\[
U\subseteq X \qquad \squigrightarrow \qquad H^i(f^{-1}(U);\Bbbk)
\]
is a sheaf of vector spaces for $i=0$. For $i>0$ and for the covers $\covU$ consisting of two elements $U$ and $V$, the failure of the sheaf axiom is measured by the connecting homomorphism in the Mayer-Vietoris long exact sequence. 
For more complicated covers a spectral sequence is needed.
However, every presheaf of vector spaces can be \emph{sheafified} and the sheafification of the Leray presheaves define the Leray sheaves.
\end{ex}

\begin{ex}[Partitions of Unity]
Suppose $X$ is paracompact.
The functor 
\[
U\subseteq X \qquad \squigrightarrow \qquad \{f:U\to\RR\,|\, \text{supp}(f)\subset U \},
\]
is not a cosheaf of sets, but it is a cosheaf of $\RR$-modules, by virtue of the existence of a partition of unity for any cover $\covU$.
\end{ex}

\subsection{Refinement of Covers}

We now describe how the sheaf or cosheaf axiom is inherited by coarser covers.

\begin{defn}[Refinement of Covers]
  Suppose $\covU$ and $\covU'$ are covers of $U$, then we say that $\covU'$ \define{refines} $\covU$ if for every $U_i'\in\covU'$ there is a $U_j\in\covU$ and an inclusion $U_i'\to U_j$. Note that every cover refines the trivial cover $\{U\}$.
\end{defn}

\begin{lem}
  Let $\cosheaf{F}$ and $F$ be a precosheaf and a presheaf respectively. Suppose $\covU'$ refines another cover $\covU$ of an open set $U$, then there are well-defined maps
  \[
  \cosheaf{F}[\covU']\to \cosheaf{F}[\covU] \qquad \mathrm{and} \qquad F[\covU]\to F[\covU'].
  \]
\end{lem}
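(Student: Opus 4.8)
The plan is to show that a refinement is, up to a non-canonical choice, a morphism of nerves, and that this morphism transports the diagrams computing $F[\covU]$ and $\cosheaf{F}[\covU]$ into those computing $F[\covU']$ and $\cosheaf{F}[\covU']$ by means of the restriction and extension maps of $F$ and $\cosheaf{F}$. First I would package the refinement as a map of index sets. Writing $\covU=\{U_j\}_{j\in\Lambda}$ and $\covU'=\{U'_i\}_{i\in\Lambda'}$, the refinement hypothesis lets me choose, for each $i\in\Lambda'$, an index $r(i)\in\Lambda$ together with an inclusion $U'_i\subseteq U_{r(i)}$. For a finite subset $I=\{i_0,\ldots,i_n\}\in N(\covU')$ one has $U'_I=U'_{i_0}\cap\cdots\cap U'_{i_n}\subseteq U_{r(i_0)}\cap\cdots\cap U_{r(i_n)}=U_{r(I)}$, where $r(I):=\{r(i_0),\ldots,r(i_n)\}$; since $U'_I\neq\emptyset$ we get $U_{r(I)}\neq\emptyset$, so $r(I)$ is an object of $N(\covU)$. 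As $I\subseteq J$ forces $r(I)\subseteq r(J)$, and $N(\covU)$ has at most one arrow between any two objects, this assignment is a functor $N(r):N(\covU')\to N(\covU)$.

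Next I would produce the comparison of diagrams. For the presheaf, the inclusions $U'_I\subseteq U_{r(I)}$ furnish restriction maps $F(U_{r(I)})\to F(U'_I)$ which assemble into a natural transformation $(F\circ\iota^{\op}_{\covU})\circ N(r)\Rightarrow F\circ\iota^{\op}_{\covU'}$ of functors on $N(\covU')$; naturality is exactly the functoriality (composability) of restriction maps. Restricting the limit cone of $F[\covU]=\varprojlim(F\circ\iota^{\op}_{\covU})$ along $N(r)$ and composing with this natural transformation yields a cone on $F\circ\iota^{\op}_{\covU'}$ with apex $F[\covU]$, and the universal property of $F[\covU']=\varprojlim(F\circ\iota^{\op}_{\covU'})$ produces the desired map $F[\covU]\to F[\covU']$. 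Dually, the extension maps $\cosheaf{F}(U'_I)\to\cosheaf{F}(U_{r(I)})$ give a natural transformation $\cosheaf{F}\circ\iota_{\covU'}\Rightarrow(\cosheaf{F}\circ\iota_{\covU})\circ N(r)^{\op}$; composing with the colimit cocone of $\cosheaf{F}[\covU]$ and invoking the universal property of $\cosheaf{F}[\covU']=\varinjlim(\cosheaf{F}\circ\iota_{\covU'})$ produces $\cosheaf{F}[\covU']\to\cosheaf{F}[\covU]$.

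The main obstacle is that $N(r)$ depends on the choice of refinement function $r$, so I must check that the resulting maps do not. Given a second choice $\tilde r$ with $U'_i\subseteq U_{\tilde r(i)}$, for any $I\in N(\covU')$ one has $U'_I\subseteq U_{r(I)}\cap U_{\tilde r(I)}=U_{r(I)\cup\tilde r(I)}$, and this last set is a nonempty intersection, hence an object $K:=r(I)\cup\tilde r(I)$ of $N(\covU)$ receiving arrows from both $r(I)$ and $\tilde r(I)$. Because the restriction maps of $F$ compose and $U'_I\subseteq U_K\subseteq U_{r(I)}$, the composite $F[\covU]\to F(U_{r(I)})\to F(U'_I)$ equals the projection $F[\covU]\to F(U_K)$ followed by restriction $F(U_K)\to F(U'_I)$, using the compatibility of the limit cone with the arrow $r(I)\to K$; the same factorization through $F(U_K)$ holds for $\tilde r$. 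Hence the two cones on $F\circ\iota^{\op}_{\covU'}$ agree componentwise and the induced maps into $F[\covU']$ coincide. The cosheaf statement is the formal dual, with colimit cocones in place of limit cones and extension maps in place of restriction maps, so the maps are well defined independently of all choices.
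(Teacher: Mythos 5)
Your proof is correct and takes essentially the same route as the paper: the refinement induces a comparison of the nerve-indexed diagrams via a natural transformation built from restriction (resp.\ extension) maps, and the universal property of the (co)limit yields the map $F[\covU]\to F[\covU']$ (resp.\ $\cosheaf{F}[\covU']\to\cosheaf{F}[\covU]$). You merely make explicit two details the paper leaves implicit---the induced functor $N(r):N(\covU')\to N(\covU)$ between nerves, and the independence of the choice of refinement function via the common upper bound $r(I)\cup\tilde r(I)$---the latter being precisely what the paper dismisses as ``easily checked.''
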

\begin{proof}
We'll detail the proof for a pre-cosheaf $\cosheaf{F}$. A refinement $\covU'\to\covU$ defines a natural transformation $\cosheaf{F}\circ \iota_{\covU'}\Rightarrow \cosheaf{F}\circ \iota_{\covU}$. The colimit defines a natural transformation from $\cosheaf{F}\circ \iota_{\covU}$ to the constant diagram $\cosheaf{F}[\covU]$. Since the composition of natural transformations is a natural transformation, this induces a cocone $\cosheaf{F}\circ\iota_{\covU'}\Rightarrow \cosheaf{F}[\covU]$ which, by the universal property of the colimit, defines a unique induced map there, i.e.
\[
\cosheaf{F}\circ \iota_{\covU'}\Rightarrow \cosheaf{F}\circ \iota_{\covU}\Rightarrow \cosheaf{F}[\covU] \quad \mathrm{implies} \quad \exists!\, \cosheaf{F}[\covU']\to\cosheaf{F}[\covU].
\]
It is easily checked that this map is independent of the choices made when defining the refinement.
\end{proof}

\begin{cor}\label{cor:refined_axiom}
  If $\cosheaf{F}$ is a cosheaf or $F$ is a sheaf for the cover $\covU'$, then it is a cosheaf or sheaf for every cover it refines.
\end{cor}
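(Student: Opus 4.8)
The plan is to prove the cosheaf assertion and to obtain the sheaf assertion for free by the formal duality between $\cosheaf{F}$ and $F$: the two halves of the preceding Lemma are mirror images, so any argument transcribes to the other case by reversing every arrow. So suppose $\covU'$ refines $\covU$ and that $\cosheaf{F}$ is a cosheaf on $\covU'$. The first step is to name the three maps in play: the two universal structure maps $p_{\covU}:\cosheaf{F}[\covU]\to\cosheaf{F}(U)$ and $p_{\covU'}:\cosheaf{F}[\covU']\to\cosheaf{F}(U)$ coming from the cosheaf axiom, together with the comparison map $r:\cosheaf{F}[\covU']\to\cosheaf{F}[\covU]$ produced by the Lemma. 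I would then check that these fit into a commuting triangle $p_{\covU}\circ r=p_{\covU'}$. This is pure formalism: all three morphisms are the unique maps induced out of colimits by the single cocone whose legs are the extension maps $\cosheaf{F}(U'_J)\to\cosheaf{F}(U)$ and $\cosheaf{F}(U_I)\to\cosheaf{F}(U)$ into the common target $\cosheaf{F}(U)$, so the identity follows from the universal property of $\varinjlim$ exactly as the well-definedness of $r$ was argued in the Lemma.

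Feeding the hypothesis into the triangle gives, for free, that $p_{\covU}$ is a split epimorphism: the composite $r\circ p_{\covU'}^{-1}$ is a section of $p_{\covU}$. Dually, for a sheaf the analogous triangle $r\circ\alpha_{\covU}=\alpha_{\covU'}$, with $\alpha$ the structure maps $F(U)\to F[-]$, exhibits $\alpha_{\covU}$ as a split monomorphism. This is as far as the triangle alone can take us, and it is \emph{not} yet the conclusion: a split epimorphism need not be invertible. The entire content of the corollary is therefore concentrated in promoting this retraction to a genuine isomorphism, equivalently in showing that the comparison map $r$ is itself an isomorphism.

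This promotion is the step I expect to be the main obstacle, precisely because it is where the cosheaf property, and not mere category theory, must enter. My plan would be to localize: for each $U_i\in\covU$ the finer cover restricts to a cover $\{U_i\cap U'_j\}_j$ of $U_i$, and more generally each intersection $U_I$ receives a restricted cover $\{U_I\cap U'_j\}_j$. Granting that $\cosheaf{F}$ remains a cosheaf on each of these restricted covers, one may rewrite $\cosheaf{F}(U_I)=\varinjlim_J \cosheaf{F}(U_I\cap U'_J)$, substitute into $\cosheaf{F}[\covU]=\varinjlim_I \cosheaf{F}(U_I)$, and then interchange the two colimits (a Fubini argument for $\varinjlim$) to identify $\cosheaf{F}[\covU]$ with a colimit cofinally equivalent to $\cosheaf{F}[\covU']$, whence $r$ is an isomorphism. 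The delicate point, and the reason this is more than bookkeeping, is justifying that the cosheaf condition for the single cover $\covU'$ of $U$ actually descends to the restricted covers of the smaller sets $U_I$; this descent is the genuine input I would have to pin down, either from an independent localization lemma or from whatever standing hypotheses on the covers are in force.
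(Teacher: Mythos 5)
Your diagnosis is correct, and it is in fact sharper than the paper's own proof. The paper's argument is exactly the commuting triangle you set up: it factors the canonical map for $\covU'$ as $\cosheaf{F}[\covU']\to\cosheaf{F}[\covU]\to\cosheaf{F}(U)$ (dually $F(U)\to F[\covU]\to F[\covU']$) and then asserts that ``by functoriality, the factored maps must themselves be isomorphisms.'' That assertion is precisely the promotion from split epimorphism (resp.\ split monomorphism) to isomorphism that you explicitly decline to make, and the paper offers nothing further to justify it. So relative to the paper you have reproduced its entire valid content (the triangle, whose commutativity you argue correctly from the universal property) and correctly located the missing step.

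The step you flag as the delicate point --- descending the (co)sheaf condition from the single cover $\covU'$ of $U$ to the restricted covers $\{U_I\cap U_j'\}_j$ of the intersections $U_I$ --- genuinely does not follow from the stated hypotheses; indeed, without it the corollary is false in this generality, so your hesitation is vindicated. Take $X=\{a,b,c\}$ discrete, $U=X$, $\covU=\{\{a,b\},\{b,c\}\}$, and $\covU'=\{\{a\},\{b\},\{c\}\}$, which refines $\covU$. Define a presheaf with $F(\{x\})=\Bbbk$ for each singleton, $F(X)=\Bbbk^3$, $F(\{b,c\})=\Bbbk^2$ with projections as restrictions, but $F(\{a,b\})=\Bbbk^3$ with coordinates $(u,v,w)$, restrictions $u$ to $\{a\}$ and $v$ to $\{b\}$, and restriction $F(X)\to F(\{a,b\})$ given by $(x_a,x_b,x_c)\mapsto(x_a,x_b,0)$; all composites are functorial. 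Since the singletons are pairwise disjoint, the nerve of $\covU'$ is discrete, $F[\covU']=\Bbbk^3$, and the canonical map is the identity, so $F$ is a sheaf on $\covU'$. But $F[\covU]$ is the pullback of $F(\{a,b\})\to F(\{b\})\leftarrow F(\{b,c\})$, which is $\Bbbk^4$, and $F(X)\to F[\covU]$ is the split monomorphism onto the slice $w=0$ --- not an isomorphism. Consequently the corollary needs the extra hypothesis you identified (the (co)sheaf condition on each restricted cover of each $U_I$), under which your colimit-interchange and cofinality argument does complete the proof; and that stronger hypothesis is exactly what holds in the paper's sole application (Proposition~\ref{prop:equivalence}), where the cover by all basic opens restricts on any $U_I$ to the cover by all basic opens of $U_I$, on which the Kan-extended functor satisfies the axiom by cofinality. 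In short: your proposal is incomplete exactly where you said it was, no argument from the corollary's stated hypotheses can close that hole, and the paper's proof closes it only by an invalid inference.
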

\begin{proof}
  Suppose we have a series of refinements
  \[
  \covU'\to\covU\to \{U\}.
  \] 
  To say that $\cosheaf{F}$ or $F$ is cosheaf or sheaf for $\covU'$ is to say that the following induced maps are isomorphisms:
  \[
  \xymatrix{\cosheaf{F}[\covU'] \ar[r] \ar@/^2pc/[rr]^{\cong} & \cosheaf{F}[\covU] \ar[r] & \cosheaf{F}(U)} \qquad \xymatrix{F(U) \ar[r] \ar@/^2pc/[rr]^{\cong} & \cosheaf{F}[\covU] \ar[r] & \cosheaf{F}[\covU']}
  \]
  However, by functoriality, the factored maps must themselves be isomorphisms, i.e.
  \[
  \xymatrix{\cosheaf{F}[\covU]\ar[r]^{\cong} & \cosheaf{F}(U)} \qquad \xymatrix{F(U) \ar[r]^{\cong} & F[\covU]}.
  \]
\end{proof}

We will make use of this corollary when studying sheaves and cosheaves on a poset equipped with the Alexandrov topology.

\section{Sheaves and Cosheaves on Posets}
\label{sec:sheaves-on-posets}

In this section we specialize the discussion of Section~\ref{sec:general-sheaves} to spaces associated to posets.
Pavel Alexandrov defined a topology associated to a finite poset in~\cite{alex-dr}.
Many have considered sheaves over posets, e.g.~\cite{ladkani2008homological,dihom_1}, but we point out the obvious dualizations for cosheaves along the way.
For this section $X$ will denote a partially ordered set $(X,\leq_X)$.
As before, $\dat$ is a category with enough limits and colimits.

\begin{defn}[Alexandrov Topology]
The \define{Alexandrov topology} is defined as follows: $U\subseteq X$ is open if and only if it is an up-set, i.e.
\[
x\in U \quad \mathrm{and} \quad x\leq y \qquad \mathrm{implies} \qquad y\in U.
\] 
Let $\Alex(X)$ denote the category of Alexandrov opens ordered by inclusion. 
The smallest open set containing an element $x\in X$ is the \textbf{open star} or \define{basic open} of $x$, written $U_x:=\{y\in X\,|\, x\leq y\}$. 
Additionally, we define the \define{closure of x} by $\bar{x}:=\{y\in X \,|\, y\leq x\}$.
\end{defn}

\begin{ex}\label{ex:closed_gen}
  Consider $(\RR,\leq)$ with the usual partial order. The open sets are all those open or half open intervals such that the right-hand endpoint is $+\infty$. Observe that the closed set $(-\infty,0)$ cannot be written as an intersection of closed sets of the form $\bar{t}$. Thus the closures at $t$ do not form a basis.
\end{ex}

Observe that, just as with the nerve, the association of elements of $X$ to their corresponding basic opens is order reversing, i.e.~we have a functor 
\[
\iota:X^{\op} \to \Alex(X) \qquad \text{since} \qquad x\leq y \quad \Rightarrow \quad U_y\subseteq U_x.
\]
This is what motivates the next proposition.

\begin{prop}\label{prop:equivalence}
If $X$ is a poset then any functor $\sheaf{F}:X\to\dat$ or $\cosheaf{F}:X^{\op}\to\dat$ defines a sheaf or cosheaf on the Alexandrov topology via a right or left Kan extension, respectively.
\[
\xymatrix{ X \ar[r]^{\sheaf{F}} \ar[d]_{\iota} & \dat \\ \Alex(X)^{\op} \ar@{.>}[ru]_{\Ran_{\iota} \sheaf{F}} & }
\qquad 
\xymatrix{ X^{\op} \ar[r]^{\cosheaf{F}} \ar[d]_{\iota} & \dat \\ \Alex(X) \ar@{.>}[ru]_{\Lan_{\iota} \cosheaf{F}} & }
\]
\end{prop}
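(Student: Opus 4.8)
The plan is to evaluate the Kan extensions by the pointwise formula and then to check the (co)sheaf axiom by hand. I would treat the sheaf case in detail; the cosheaf case is formally dual, trading the right Kan extension, limits, and initial functors for the left Kan extension, colimits, and final functors. Throughout write $G := \Ran_{\iota}\sheaf F$.

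First I would unwind the pointwise right Kan extension. For an Alexandrov open $U$, the comma category computing $G(U)$ has objects the pairs $(x,\, U_x \subseteq U)$; since $U$ is an up-set such an inclusion exists precisely when $x \in U$, and as everything in sight is a poset this comma category is canonically the subposet $U \subseteq X$. The pointwise formula therefore reads
\[
G(U) \;\cong\; \varprojlim_{x \in U} \sheaf F(x),
\]
and in particular $G(U_x) \cong \sheaf F(x)$ because $x$ is initial in $U_x$; this records that $\iota$ is fully faithful and that $G$ recovers $\sheaf F$ on basic opens.

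Next I would reduce the sheaf axiom to a single cofinality statement. Fix a cover $\covU$ of $U$. Using the formula above for each nerve intersection $U_I$, the right-hand side of the axiom becomes an iterated limit, which I rewrite as a limit over a single indexing category:
\[
\varprojlim_{I \in N(\covU)} \varprojlim_{x \in U_I} \sheaf F(x) \;\cong\; \varprojlim_{(I,x) \in \cE} \sheaf F(x).
\]
Here $\cE$ is the category of pairs $(I,x)$ with $x \in U_I$, a morphism $(I,x) \to (J,y)$ being the data of $J \subseteq I$ together with $x \le y$; the isomorphism is the ``Fubini'' observation that a cone over the iterated diagram is exactly a cone over $(I,x)\mapsto \sheaf F(x)$. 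Writing $\sheaf F$ also for its restriction to the subposet $U$, the projection $\pi\colon \cE \to U$, $(I,x)\mapsto x$, recovers the above diagram as $\sheaf F\circ\pi$, so it suffices to show that $\pi$ is an initial functor; then $\varprojlim_{\cE}(\sheaf F\circ\pi) \cong \varprojlim_{x\in U}\sheaf F(x) = G(U)$, and one checks via the relevant universal properties that this isomorphism is the canonical comparison map of the sheaf axiom.

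The crux, and the only place the Alexandrov topology is used, is the initiality of $\pi$, i.e. that each comma category $(x_0 \downarrow \pi)$ is nonempty and connected. Nonemptiness is immediate: $x_0 \in U$ lies in some $U_{i_0}$, giving the object $(\{i_0\}, x_0)$. Connectivity rests on cover elements being up-sets: since $x_0 \in U_{i_0}$, every $x \ge x_0$ also lies in $U_{i_0}$. Hence for any object $(I,x)$ over $x_0$ one has $x \in U_{I \cup \{i_0\}}$, and the zigzag
\[
(I,x) \;\leftarrow\; (I\cup\{i_0\},x) \;\rightarrow\; (\{i_0\},x) \;\leftarrow\; (\{i_0\},x_0)
\]
lies entirely in $(x_0 \downarrow \pi)$ and connects $(I,x)$ to the fixed object $(\{i_0\},x_0)$. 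I expect the main obstacle to be purely organizational: keeping the two variances in $\cE$ straight and confirming that the zigzag respects the structure maps from $x_0$. The cosheaf case runs dually, with $(\Lan_{\iota}\cosheaf F)(U) \cong \varinjlim_{x\in U}\cosheaf F(x)$, where $\pi$ is now final and the same up-set property supplies the connectivity; in the elementary targets $\Set$ and $\Vect$ the entire verification can instead be read off directly, since the compatibility conditions imposed by a cover coincide with those of the full poset precisely because $x_0 \le x$ and $x_0 \in U_i$ force $x \in U_i$.
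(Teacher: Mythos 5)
Your route is genuinely different from the paper's. The paper argues in two steps: the pointwise Kan extension formula $G(U)\cong\varprojlim_{x\in U}F(x)$ forces the sheaf axiom on the finest cover $U=\cup_{x\in U}U_x$ essentially by inspection, and Corollary~\ref{cor:refined_axiom} then transfers the axiom to every other cover, since the cover by basic opens refines every cover of $U$. You instead verify the axiom for an arbitrary cover $\covU$ directly, collapsing the iterated limit $\varprojlim_{I\in N(\covU)}\varprojlim_{x\in U_I}F(x)$ to a limit over the Grothendieck-style category $\cE$ and reducing everything to a cofinality statement for the projection $\pi:\cE\to U$. Your bookkeeping is correct: the comma category computing $\Ran_\iota F$ at $U$ is indeed the subposet $U$, and with your variance on $\cE$ (morphisms $(I,x)\to(J,y)$ iff $J\subseteq I$ and $x\le y$) cones over $\cE$ do match cones over the iterated diagram. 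What your version buys is self-containment (no refinement lemma) and a precise identification of the single place where the Alexandrov up-set property enters; what the paper's version buys is brevity and reuse of a lemma already in hand.

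There is one directional slip you need to repair. For the \emph{limit} comparison $\varprojlim_{\cE}(F\circ\pi)\cong\varprojlim_{x\in U}F(x)$, the condition on $\pi$ is that the comma categories $(\pi\downarrow x_0)$ --- objects $(I,x)$ with $x\le x_0$ --- be nonempty and connected; what you verified is nonemptiness and connectivity of $(x_0\downarrow\pi)$ (objects with $x_0\le x$), which is the finality condition governing \emph{colimits}. The two are not interchangeable: the inclusion of the terminal object of $\{0\to 1\}$ has every $(d\downarrow G)$ nonempty and connected, yet the limit over $\{0\to 1\}$ is $F(0)$ while the limit over the subcategory is $F(1)$. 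Fortunately your zigzag dualizes using the same topological input in mirrored form: if $x\le x_0$ and $x\in U_I$, then $x_0\in U_I$ because $U_I$ is an intersection of up-sets; hence every object $(I,x)$ of $(\pi\downarrow x_0)$ admits morphisms $(I,x)\to(I,x_0)\to(\{i\},x_0)$ for any $i\in I$, and any two singleton objects $(\{i\},x_0)$ and $(\{j\},x_0)$ receive morphisms from $(\{i,j\},x_0)$, which is a legitimate object since $x_0\in U_i\cap U_j$ witnesses $\{i,j\}\in N(\covU)$. Note that the check you actually wrote down is precisely the one needed for the cosheaf half of the proposition, where $\pi$ must be final for the colimit comparison; so after swapping the two comma-category verifications between the two halves, your proof is complete and correct.
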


\begin{proof}
We recall what these Kan extensions assign to a given Alexandrov open $U$ where we make the the notational identifications $\sheaf{F}=\Ran_{\iota} \sheaf{F}$ and $\cosheaf{F}=\Lan_{\iota} \cosheaf{F}$:
\[
\sheaf{F}(U):= \varprojlim_{x\in U}\sheaf{F}(x) \qquad \text{and} \qquad \cosheaf{F}(U)\cong \varinjlim_{x\in U}\cosheaf{F}(x)
\]
The extension forces the (co)sheaf axiom for the finest possible cover of $U=\cup_{x\in U} U_{x}$, which by Corollary~\ref{cor:refined_axiom} implies the corresponding axiom for every other possible cover.
\end{proof}

\begin{ex}[Nerve]
Suppose $\sheaf{F}$ is a sheaf on a general topological space $X$. Suppose $\covU=\{U_i\}_{i\in\Lambda}$ is an open cover. The nerve is also a poset, so composing $\iota^{\op}_{\covU}:N(\covU)\to\Open(X)^{\op}$ with $\sheaf{F}:\Open(X)^{\op}\to\dat$ gives a sheaf over the nerve equipped with the Alexandrov topology.
\end{ex}

Let $[\cat,\dat]$ denote the category whose objects are functors $F:\cat\to\dat$ and whose morphisms are natural transformations. 
The following is a simple modification of an equivalence known to experts, which is normally stated for sheaves~\cite{ladkani2008derived}.

\begin{cor}\label{cor:equivalence}

The following categories are equivalent:
\[
[X,\dat]\cong \Shv(X;\dat) \qquad [X^{\op},\dat]\cong \Coshv(X;\dat)
\]
When $\dat=\Vect$ we will drop the extra notation and write $\Shv(X)$ and $\Coshv(X)$.
\end{cor}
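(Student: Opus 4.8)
The plan is to deduce this corollary directly from Proposition~\ref{prop:equivalence} by constructing an explicit pair of functors in each direction and checking they are mutually quasi-inverse. I will treat the sheaf case $[X,\dat]\cong\Shv(X;\dat)$ in detail; the cosheaf case follows by the formally dual argument, replacing right Kan extensions and limits with left Kan extensions and colimits. In one direction, Proposition~\ref{prop:equivalence} already gives a map on objects: a functor $\sheaf{F}:X\to\dat$ produces the sheaf $\Ran_\iota\sheaf{F}$ on $\Alex(X)$ via $(\Ran_\iota\sheaf{F})(U)=\varprojlim_{x\in U}\sheaf{F}(x)$. In the other direction, I would restrict a sheaf $\sheaf{G}:\Alex(X)^{\op}\to\dat$ along $\iota:X^{\op}\to\Alex(X)$ (equivalently, evaluate on basic opens), sending $\sheaf{G}$ to the composite $\sheaf{G}\circ\iota^{\op}:X\to\dat$, which on an element $x$ returns the stalk-like value $\sheaf{G}(U_x)$.

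The key steps are as follows. First I would verify that both assignments are functorial on morphisms (natural transformations), which for the restriction functor is immediate and for the Kan extension follows from functoriality of $\varprojlim$. Second, I would check the composite starting from $[X,\dat]$: restricting $\Ran_\iota\sheaf{F}$ back along $\iota$ recovers $\sheaf{F}$, because $U_x$ has $x$ as its minimum element, so $x$ is initial in the index category $U_x$ and hence $\varprojlim_{y\in U_x}\sheaf{F}(y)\cong\sheaf{F}(x)$ naturally in $x$. Third, and this is the substantive direction, I would check the composite starting from $\Shv(X;\dat)$: given a sheaf $\sheaf{G}$, I must show the canonical comparison map $\sheaf{G}(U)\to\varprojlim_{x\in U}\sheaf{G}(U_x)$ is an isomorphism for every Alexandrov open $U$. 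This is precisely the sheaf axiom for the cover $\covU=\{U_x\}_{x\in U}$ of $U$, which holds by the defining property of $\sheaf{G}$ being a sheaf together with the observation, recorded in the proof of Proposition~\ref{prop:equivalence}, that $U=\bigcup_{x\in U}U_x$ and that this is the finest cover. Finally I would confirm these isomorphisms assemble into natural isomorphisms of functors, so the two composites are naturally isomorphic to the respective identities.

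The main obstacle, and the only place requiring genuine care, is the third step: identifying the limit $\varprojlim_{x\in U}\sheaf{G}(U_x)$ over the poset $U$ with the limit defining the sheaf condition on the nerve of the cover $\{U_x\}$. These two index categories are not literally the same, so I must argue that the comparison map built from $U$-indexed data agrees with the comparison map built from the nerve $N(\covU)$. The cleanest way is to exhibit a cofinal functor between the two diagrams: every nonempty finite intersection $U_{x_0}\cap\cdots\cap U_{x_n}$ is itself an up-set and in fact contains a basic open, so the basic opens are cofinal among all terms of the nerve, making the two limits canonically isomorphic. I expect the bookkeeping of naturality to be routine once this cofinality is established, so I would state it as a lemma-free remark and refer to Corollary~\ref{cor:refined_axiom} to pass from the basic-open cover to arbitrary covers.
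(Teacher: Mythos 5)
Your overall architecture is the natural fleshing-out of what the paper leaves implicit (the paper offers no separate proof of this corollary, deferring to Proposition~\ref{prop:equivalence} and the literature): restriction along $\iota$ one way, Kan extension the other, with one composite trivial because $x$ is the minimum of $U_x$, and the other composite reducing to the sheaf axiom for the basic cover. Your first two steps are correct. But the step you yourself flag as the crux fails as stated. First, $x\mapsto\{x\}$ is not a functor from the poset $U$ to $N(\covU)$, since $x\leq y$ gives no inclusion $\{x\}\subseteq\{y\}$; and if instead you regard the singletons as a full subcategory of $N(\covU)$, then for $|I|\geq 2$ the comma category of singletons mapping into $I$ is the \emph{discrete} category on the elements of $I$, hence not connected, so the singletons do not form an initial subcategory. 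Second, and decisively, no cofinality argument of any kind can work, because cofinality is blind to the values of the diagram and would force $\varprojlim_{I\in N(\covU)}G(U_I)\cong\varprojlim_{x\in U}G(U_x)$ for \emph{every} presheaf $G$, which is false. Take $X=\{a,b,c,d\}$ with $a,b<c$ and $a,b<d$, so that $U_a\cap U_b=\{c,d\}$ is open but not basic; let $G(U_a)=G(U_b)=G(\{c,d\})=\Bbbk$ and $G=0$ on all other opens, with $G(U_a)\to G(\{c,d\})$ the identity and $G(U_b)\to G(\{c,d\})$ zero. For $U=X$ the poset-indexed limit is $\Bbbk^2$, while the nerve-indexed limit is $\Bbbk$, because the nerve object $I=\{a,b\}$ imposes $s_a=0$ inside $G(\{c,d\})$. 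Your observation that each $U_I$ \emph{contains} a basic open orients the comparison maps $G(U_I)\to G(U_y)$ in a direction that simply does not occur in the nerve diagram, which is exactly why the containment buys nothing.

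The repair must invoke the sheaf hypothesis at the intersections $U_I$, which is precisely what a cofinality argument would have let you skip. Each $U_I$ is itself an Alexandrov open covered by its basic opens, and for a sheaf $G$ (say $\dat=\Vect$, where limits are computed on underlying sets) the axiom makes $G(V)\to\prod_{y\in V}G(U_y)$ injective for every open $V$: in the nerve limit, every coordinate $s_J$ is determined by any singleton coordinate via the morphism $\{y\}\subseteq J$. Now, given a cone $(f_x\colon A\to G(U_x))_{x\in U}$ over the poset diagram, the required nerve datum $g_I\colon A\to G(U_I)$ exists because the family $(f_y)_{y\in U_I}$ satisfies the nerve compatibilities for the basic cover of $U_I$: for finite $J\subseteq U_I$ with $V=\bigcap_{y\in J}U_y\neq\emptyset$, the equality in $G(V)$ of the restrictions of $f_y$ and $f_{y'}$ may be tested, by the injectivity above, after restricting further to each basic open $U_z\subseteq V$ with $z\in V$, where both sides equal $f_z$ by the cone conditions, since $y\leq z$ and $y'\leq z$. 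This produces the inverse to your comparison map; the naturality bookkeeping is then routine as you say, and the dual argument (coproducts surjecting onto colimits, in the spirit of Section~\ref{subsec:cosheaf-homology-colimit}) handles cosheaves. With this substitution in place of the cofinality claim, your proof goes through and coincides with the argument the paper intends.
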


Given this equivalence, one can ask how standard operations on sheaves, e.g.~pullback and pushforward, translate into operations on the categories $[X,\dat]$ and $[X^{\op},\dat]$.
Topological interpretations of these functors are provided as well, with special attention paid to the pushforward with open supports described in Section~\ref{subsec:open-pushforward}. 

\subsection{Pullback}
\label{subsec:pullback}

Suppose $(X,\leq_X)$ and $(Y,\leq_Y)$ are posets. 
A map of posets is a map of sets $f:X\to Y$ that is order-preserving, i.e. if $x\leq_X x'$ then $f(x)\leq_Y f(x')$. 
Alternatively, since a poset can be viewed as a category, a map of posets is just a functor. We will continue to abbreviate $(X, \leq_X)$ as $X$ and $(Y,\leq_Y)$ as $Y$.

\begin{defn}
  Given a sheaf $\sheaf{G}$ or a cosheaf $\cosheaf{G}$ on $Y$ and a map of posets $f:X\to Y$, we can define the \define{pullback} or \define{inverse image} to be the obvious pre-composition that completes the following diagrams:
\[
\xymatrix{Y \ar[r]^-{\sheaf{G}} & \dat \\
X \ar[u]^{f} \ar@{-->}[ru]_{f^*\sheaf{G}} & } 
\qquad
\xymatrix{Y^{\op} \ar[r]^-{\cosheaf{G}} & \dat \\
X^{\op} \ar[u]^{f^{\op}} \ar@{-->}[ru]_{f^*\cosheaf{G}} & } 
\]

Since both of these constructions are functorial, we have defined two functors:
  \[
  f^*:\Shv(Y;\dat) \to \Shv(X;\dat) \qquad f^*:\Coshv(Y;\dat)\to\Coshv(X;\dat).
  \]
\end{defn}

\begin{rmk}
The above definition agrees with the classical definition of the pushforward sheaf, as we now see. 
Recall that given a sheaf $\sheaf{G}$ on a topological space $Y$ and a continuous map $f:X\to Y$, the sheaf $f^*\sheaf{G}$ is defined to be the sheafification of the pre-sheaf
\[
  f^*G(U):=\varinjlim_{V\supset f(U)} G(V).
\]
However, when working in the Alexandrov topology
\[
  f^*\sheaf{G}(U_x):=\varinjlim_{V \supset f(U_x)} \sheaf{G}(V) \cong \sheaf{G}(V_{f(x)})=\sheaf{G}(f(x)),
\]
where we have used the fact that the smallest open set containing $f(U_x)=f(\{x' | x\leq x'\})$ is $V_{f(x)}=\{y|f(x)\leq y\}$.
\end{rmk}

\begin{ex}[Constant Sheaf and Cosheaf]
 Consider the constant map $p:X\to\star$. A sheaf $\sheaf{G}$ on $\star$ is completely specified by a vector space $W$, so we can refer to $\sheaf{G}$ by the name $W$. We define the constant sheaf on $X$ with value $W$ to be $W_X:=p^*W$. One sees that it is a sheaf that assigns $W$ to every cell with all the restriction maps being the identity. Similarly, the constant cosheaf with value $W$ is $\cosheaf{W}_X:=p^* W$.
\end{ex}

\subsection{Pushforward}
\label{subsec:pushforward}

\begin{defn}
Given a sheaf $\sheaf{F}$ or a cosheaf $\cosheaf{F}$ on $X$ and a map of posets $f:X\to Y$, the \define{pushforward (co)sheaf} $f_*\sheaf{F}$ is the right (left) Kan extension of $F$ along $f$, i.e. on point a $y\in Y$, $$f_*\sheaf{F}(y):=\varprojlim_{f(x)\geq y} F(x) \qquad f_*\cosheaf{F}(y)=\varinjlim_{f(x)\geq y} \cosheaf{F}(x)$$
Since both of these constructions are functorial, we have defined two functors:
  \[
  f_*:\Shv(X;\dat) \to \Shv(Y;\dat) \qquad f_*:\Coshv(X;\dat)\to\Coshv(Y;\dat)
  \]
\end{defn}

\begin{rmk}
Let us check that the above point-wise description of the pushforward agrees with the usual formulation of the pushforward. 
If $f:X\to Y$ is a map of topological spaces, then for every open set $V\subseteq Y$, the pre-image $f^{-1}(V)$ is open as well. 
Hence, the value of the pushforward (co)sheaf on $V$ is just the value on $f^{-1}(V)$.
In the Alexandrov topology, the pre-image of a basic open $V_y$ is just $f^{-1}(V_y)=\{x\in X\,|\, f(x)\geq y\}$. Taking the limit or colimit over this indexing set simply appeals to the sheaf or cosheaf axiom for the finest cover of this open set.
\end{rmk}

\begin{rmk}
Note that, in contrast to pullbacks, this definition is context-dependent. The operation $f_*$ is defined differently depending on whether it is being applied to a sheaf or a cosheaf.
\end{rmk}

\begin{ex}[Global Sections]
Let $p:X\to \star$ be the constant map. We have the following obvious isomorphisms, some of which will be proved later in the paper.
\[
  p_* \sheaf{F}(\star)\cong \sheaf{F}(X) \cong \varprojlim \sheaf{F} \cong H^0(X;\sheaf{F}) \qquad p_*\cosheaf{F}(\star) \cong \cosheaf{F}(X) \cong \varinjlim\cosheaf{F} \cong H_0(X;\cosheaf{F})
\]
As we will see, the derived functors of $p_*$ will be used to define higher sheaf cohomology and cosheaf homology.
\end{ex}

\subsection{Pushforward with Open Supports}
\label{subsec:open-pushforward}

Since we defined the pushforward of a sheaf on a poset using the right Kan extension, one can just as well ask what happens when we use the left Kan extension instead. This turns out to be the basis for defining \emph{sheaf homology} and \emph{cosheaf cohomology}.

\begin{defn}
Given a sheaf $\sheaf{F}$ or a cosheaf $\cosheaf{F}$ on $X$ and a map of posets $f:X\to Y$, the \define{pushforward with open supports} is the left Kan extension of $F$ along $f$ or the right Kan extension of $\cosheaf{F}$ along $f^{\op}$, i.e. on point a $y\in Y$, 
\[
f_{\dd}F(y)=\varinjlim_{f(x)\leq y} F(x) \qquad f_{\dd}\cosheaf{F}(y):= \varprojlim_{f(x)\leq y}\cosheaf{F}(x)
\]

Since both of these constructions are functorial, we have defined two functors:
  \[
  f_{\dd}:\Shv(X;\dat) \to \Shv(Y;\dat) \qquad f_{\dd}:\Coshv(X;\dat)\to\Coshv(Y;\dat)
  \]
\end{defn}

This functor appears to be quite unusual, despite its naturality from the categorical perspective. To explain its topological origin, we revisit some of the original ideas of Alexandrov.

When Alexandrov first defined his topology he did two things differently:
\begin{enumerate}
  \item He only defined the topology for \emph{finite} posets.
  \item He defined the \emph{closed} sets to have the property that if $x\in V$ and $x'\leq x$, then $x'\in V$.
\end{enumerate}
\index{cosheaf!on closed sets}\index{sheaf!on closed sets}
Let us repeat the initial analysis of sheaves and diagrams indexed over posets, where we now put closed sets on equal footing with open sets. Observe that as before we have an inclusion functor:
\[
 j:(X,\leq) \to \Closed(X) \qquad x \rightsquigarrow \bar{x}:=\{x'| x'\leq x\}
\]
Consequently, we have a similar diagram for a functor $F:X\to\dat$ as before.
\[
  \xymatrix{X \ar[r]^F \ar[d]_{j} & \dat \\
  \Closed(X) \ar@{.>}[ur]_{?} & }
\]

If we choose the left Kan extension, we'd like to say the extended functor is a cosheaf on closed sets, i.e. use the definition of a cosheaf but replace open sets with closed sets. Unfortunately, this concept is not well defined for general topological spaces because the arbitrary union (colimit) of closed sets is not always closed. For Alexandrov spaces this property does hold and this illustrates one of the extra symmetries this theory possesses.

However, in order for the Kan extension to take a diagram and make it into a cosheaf, we need to know whether the image of the inclusion functor defines a basis for the closed sets. In Example \ref{ex:closed_gen} we showed that this is not always the case. The topology generated by the image of this functor is called the \textbf{specialization} topology and it suffers from certain technical deficiencies. In particular, order-preserving maps are not necessarily continuous in this topology, thus it fails to give a functorial theory. Fortunately, for finite posets these topologies agree and we can talk about cosheaves on closed sets without any trouble.

We now can give a topological explanation for the existence of the functor $f_{\dd}$. It is the functor analogous to ordinary pushforward where we have adopted closed sets as the indexing category for cosheaves and sheaves. If $f:X\to Y$ is a map of posets, then $f^c$ is the induced map between closed sets. The dagger pushforward is then the obvious completion of the diagram.
\[
\xymatrix{\Closed(X) \ar[r]^-F & \dat \\
\Closed(Y) \ar[u]^{f^c} \ar@{-->}[ru]_{f_{\dd}F} & }
\]

In Section \ref{sec:derived-homology} this functor provides the foundation for defining \textbf{sheaf homology} and \textbf{cosheaf cohomology}, which are theories that don't exist for general spaces.

\subsection{Adjunctions}
\label{subsec:adjunctions}

Finally, we note the usual adjunctions between these functors hold.

\begin{thm}
The functors $f^*:\Shv(Y)\to \Shv(X)$ and $f_*:\Shv(X)\to\Shv(Y)$ form an adjoint pair $(f^*,f_*)$ and thus
\[
\Hom_{\Shv(X)}(f^*G,F)\cong \Hom_{\Shv(Y)}(G,f_*F).
\]
Dually, the functors for cosheaves satisfy the opposite adjunction $(f_*,f^*)$
\[
\Hom_{\Coshv(Y)}(f_*\cosheaf{F},\hG) \cong \Hom_{\Coshv(X)}(\cosheaf{F},f^*\hG).
\]
\end{thm}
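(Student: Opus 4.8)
The plan is to recognize all four functors as restrictions or Kan extensions between functor categories and then invoke the standard two-sided adjunction $\Lan_f \dashv f^* \dashv \Ran_f$ enjoyed by restriction. By Corollary~\ref{cor:equivalence} I may replace $\Shv(X;\dat)$ and $\Coshv(X;\dat)$ by the functor categories $[X,\dat]$ and $[X^{\op},\dat]$; under these identifications the pullbacks of Section~\ref{subsec:pullback} are precomposition functors and the pushforwards of Section~\ref{subsec:pushforward} are, by their very definitions, Kan extensions. The bulk of the argument is then bookkeeping: matching each functor to the correct Kan extension and keeping track of which side the adjoint sits on.

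For the sheaf statement I would argue as follows. Under $\Shv(X;\dat)\cong[X,\dat]$ the functor $f^*\sheaf{G}=\sheaf{G}\circ f$ is precisely precomposition (restriction) along the poset map $f:X\to Y$, while $f_*\sheaf{F}$ was \emph{defined} to be the right Kan extension $\Ran_f F$, with pointwise value $\varprojlim_{f(x)\geq y}F(x)$. The asserted adjunction $(f^*,f_*)$ is therefore nothing more than the defining universal property of the right Kan extension, $f^*\dashv\Ran_f$. Since $\dat$ has enough limits, the pointwise right Kan extension exists and indeed computes $f_*$, so there is no existence issue to address.

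For the cosheaf statement the one point requiring care is that the ambient category is $[X^{\op},\dat]$ and the relevant map is $f^{\op}:X^{\op}\to Y^{\op}$. Here $f^*\hG=\hG\circ f^{\op}$ is restriction along $f^{\op}$, and I would check that $f_*\cosheaf{F}$, defined as the left Kan extension with pointwise value $\varinjlim_{f(x)\geq y}\cosheaf{F}(x)$, is exactly $\Lan_{f^{\op}}\cosheaf{F}$: a morphism $f^{\op}(x)\to y$ in $Y^{\op}$ is the same datum as the relation $f(x)\geq y$ in $Y$, so the comma category indexing the defining colimit is precisely $\{x\mid f(x)\geq y\}$. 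The standard adjunction $\Lan_{f^{\op}}\dashv(f^{\op})^*$ then delivers the reversed adjoint pair $(f_*,f^*)$ claimed in the theorem.

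The only genuine obstacle is this direction-reversal in the cosheaf case: passing to opposite categories converts a right Kan extension into a left Kan extension and interchanges which functor is the left adjoint, so one must resist the temptation to copy the sheaf argument verbatim. I expect the cleanest exposition to cite the general two-sided adjunction for functor categories as a black box and then simply align the four pointwise formulas above; for a fully self-contained proof one would instead read the unit and counit off the universal cones and cocones defining the Kan extensions and verify the two triangle identities, which is routine once the identifications are in hand.
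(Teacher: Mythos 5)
Your proposal is correct, but it routes the argument differently from the paper. The paper proves the sheaf adjunction by constructing the unit and counit by hand from the pointwise formulas: the counit $f^*f_*\sheaf{F}\to\sheaf{F}$ comes from the projection $(f_*\sheaf{F})(f(x))=\varprojlim\{\sheaf{F}(z)\mid f(z)\geq f(x)\}\to \sheaf{F}(x)$ using $x\in f^{-1}(f(x))$, and the unit $\sheaf{G}\to f_*f^*\sheaf{G}$ comes from assembling the restriction maps $\rho^{\sheaf{G}}_{f(x),y}:\sheaf{G}(y)\to \sheaf{G}(f(x))$ through the universal property of the limit; the cosheaf half is left to dualization. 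You instead treat the whole theorem as an instance of the standard two-sided adjunction $\Lan_f\dashv f^*\dashv\Ran_f$ for restriction along a functor between small categories, which is legitimate here because the paper \emph{defines} $f_*$ as the pointwise right Kan extension for sheaves and as $\Lan_{f^{\op}}$ for cosheaves, and $\dat$ is assumed to have enough limits and colimits so the pointwise extensions exist. Your one nontrivial check --- that a morphism $f^{\op}(x)\to y$ in $Y^{\op}$ is the relation $f(x)\geq y$, so the comma category indexing $\Lan_{f^{\op}}\cosheaf{F}(y)$ is exactly $\{x\mid f(x)\geq y\}$ --- is done correctly and is precisely the point where a careless copy of the sheaf argument would go wrong. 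What each approach buys: your black-box citation delivers the triangle identities for free (the paper asserts universality of its unit and counit without verifying them) and handles sheaves and cosheaves uniformly under $(-)^{\op}$, while the paper's concrete construction exhibits the actual unit and counit maps, which is more self-contained and more informative for computations later in the text. Since the unit and counit of the abstract Kan extension adjunction specialize exactly to the maps the paper writes down, the two proofs agree at the level of data; yours differs in packaging and in which facts are cited versus verified.
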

\begin{proof}
  Recall that $f^*(f_*F)(x)=(f_*F)(f(x))$. Using the fact that 
  $$(f_*F)(f(x)) = \varprojlim\{F(z)|f(z)\geq f(x)\},$$ 
  we get a map to $F(x)$ since $x\in f^{-1}(f(x))$ and this morphism is final for each $x$. This implies there is a natural transformation of functors $f^*\circ f_*\to\id$, which is universal (final). 
  
  Similarly, 
$$f_*(f^*G)(y)=\varprojlim\{f^*G(x)=G(f(x))|f(x)\geq y\}$$ 
  and since $y\leq f(x)$ we can use the restriction map $\rho^G_{f(x),y}:G(y)\to G(f(x))$. The universal property of the limit guarantees a map $G(y)\to\varprojlim G(f(x))=f_*f^*G(y)$ and thus a natural transformation of functors $\id\to f_*f^*$.
\end{proof}

\begin{thm}
  The functors $f_{\dd}:\Shv(X)\to\Shv(Y)$ and $f^*:\Shv(Y)\to\Shv(X)$ form an adjoint pair $(f_{\dd},f^*)$ and thus
  \[
  \Hom_{\Shv(Y)}(f_{\dd}F,G)\cong\Hom_{\Shv(X)}(F,f^*G).
  \]
  Dually, the functors for cosheaves satisfy the opposite adjunction $(f^*,f_{\dd})$
  \[
  \Hom_{\Coshv(X)}(f^*\hG,\cosheaf{F}) \cong \Hom_{\Coshv(Y)}(\hG,f_{\dd}\hG).
  \]
\end{thm}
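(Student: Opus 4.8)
The plan is to recognize this theorem as the formal dual of the preceding one. There, $f_*$ was the right Kan extension and therefore the right adjoint to precomposition $f^*$; here $f_{\dd}$ is \emph{by definition} a Kan extension in the opposite variance, and the adjunction is nothing more than its defining universal property. Concretely, I would first invoke Corollary~\ref{cor:equivalence} to replace $\Shv(X)$ and $\Shv(Y)$ by the functor categories $[X,\dat]$ and $[Y,\dat]$. Under this identification the definition in Section~\ref{subsec:pullback} shows that $f^*$ is literally precomposition with the functor $f\colon X\to Y$, while $f_{\dd}=\Lan_f$ by definition.

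For the sheaf statement the universal property of the left Kan extension then yields the adjunction immediately:
\[
\Hom_{[Y,\dat]}(\Lan_f F, G)\;\cong\;\Hom_{[X,\dat]}(F, G\circ f)\;=\;\Hom_{\Shv(X)}(F, f^*G).
\]
To present this in the explicit unit/counit style of the previous proof, I would exhibit both natural transformations. The counit $f_{\dd}f^*G\to G$ is, at $y\in Y$, the canonical map out of the colimit $\varinjlim_{f(x)\leq y}G(f(x))\to G(y)$ determined by the cocone whose leg at $x$ is the structure map $G(f(x))\to G(y)$ of the sheaf $G$ associated to $f(x)\leq y$. The unit $F\to f^*f_{\dd}F$ is, at $x\in X$, the inclusion $F(x)\to\varinjlim_{f(x')\leq f(x)}F(x')=f_{\dd}F(f(x))$ of the term indexed by $x$ itself, which is legitimate since $f(x)\leq f(x)$. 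The two triangle identities are then a routine verification on these colimit structure maps.

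For the cosheaf statement I would repeat the argument after passing to opposites. By Corollary~\ref{cor:equivalence} one has $\Coshv(X)\cong[X^{\op},\dat]$, the pullback $f^*$ is precomposition with $f^{\op}$, and $f_{\dd}$ is by definition the right Kan extension $\Ran_{f^{\op}}$. Since the right Kan extension is right adjoint to precomposition, I obtain
\[
\Hom_{\Coshv(X)}(f^*\hG,\cosheaf{F})\;\cong\;\Hom_{[Y^{\op},\dat]}(\hG,\Ran_{f^{\op}}\cosheaf{F})\;=\;\Hom_{\Coshv(Y)}(\hG,f_{\dd}\cosheaf{F}),
\]
which is exactly the opposite adjunction $(f^*,f_{\dd})$. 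The corresponding unit and counit are dual to those above, assembled from the extension maps of $\cosheaf{F}$ and the projection maps of the defining limits.

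The computation carries no analytic content, so the only thing demanding genuine care — and what I treat as the main obstacle — is the bookkeeping of variances: confirming that the pointwise formulas defining $f_{\dd}$ (the colimit over $\{x\mid f(x)\leq y\}$ for sheaves, the limit over the same indexing set for cosheaves) really do compute the abstract Kan extensions in the relevant functor categories, and that the opposite categories are tracked correctly so that the sheaf and cosheaf adjunctions land on opposite sides. Because $\dat$ has enough limits and colimits these pointwise Kan extensions exist and agree with the abstract ones, so the entire argument reduces to the universal property already recorded in Proposition~\ref{prop:equivalence}.
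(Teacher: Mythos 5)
Your proposal is correct and takes essentially the same route as the paper: the paper likewise defines $f_{\dd}$ as the left Kan extension $\Lan_f$ (resp.\ $\Ran_{f^{\op}}$ for cosheaves) and proves the adjunction by constructing exactly your counit $f_{\dd}f^*G(y)=\varinjlim_{f(x)\leq y}G(f(x))\to G(y)$ from the restriction maps of $G$ and asserting the unit $\id\to f^*f_{\dd}$ by a similar argument. Your version is in fact slightly more complete, since you make the unit explicit (the structure map of the term indexed by $x$, using $f(x)\leq f(x)$) and note the abstract one-line justification via $\Lan_f\dashv f^*$, whereas the paper leaves both implicit.
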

\begin{proof}
  $f_{\dd}(f^*G)(y)=\varinjlim \{G(f(x))|f(x)\leq y\}$ so again we can use the restriction maps to define maps to $G(y)$. The universal property of colimits gives a map $f_{\dd}f^*G(y)\to G(y)$ and thus a map of functors $f_{\dd}f^*\to\id$. Similar arguments give a map $\id\to f^*f_{\dd}$
\end{proof}

\section{Cellular Sheaves and Cosheaves}
\label{sec:cell-sheaves}

The sheaves of interest in this paper were explicitly outlined by Robert MacPherson.
Subsequent work on cellular sheaves was carried out in the theses of Shepard~\cite{shepard}, Vybornov~\cite{vybornov-triang}, and the author~\cite{curry2014sheaves}.
Shepard provided the first truly detailed account of the subject, but his thesis was never published.
We review the necessary definitions.

\begin{defn}\label{defn:reg-cell}
  A \define{regular cell complex} is a space $|X|$ equipped with a partition into pieces $\{|\sigma|\}_{\sigma\in X}$ called \define{cells} satisfying the following four properties. We denote the closure of a cell $|\sigma|\subseteq|X|$ by $|\Bar{\sigma}|$.
  \begin{enumerate}
    \item Each point $x\in |X|$ has an open neighborhood intersecting only finitely many $|\sigma|$.
    \item $|\sigma|$ is homeomorphic to $\RR^k$ for some $k$ (where $\RR^0$ is one point).
    \item If $|\Bar{\tau}|\cap |\sigma|$ is non-empty, then $|\sigma|\subseteq |\Bar{\tau}|$.
    \item The pair $|\sigma|\subset |\Bar{\sigma}|$ is homeomorphic to the pair $B^{k}\subset \bar{B}^k$, i.e.~there is a homeomorphism from the closed ball $\varphi:\bar{B}^k\to |\Bar{\sigma}|$ that sends the interior to $|\sigma|$.
  \end{enumerate}
  We will refer to the space and its indexing set as a pair $(|X|,X)$.
\end{defn}

\begin{defn}\label{defn:cell-cmplx}
 A \define{cell complex} is a space $X$ with a partition into pieces $\{|\sigma|\}_{\sigma\in X}$ whose one point compactification is a regular cell complex.
\end{defn}

\begin{ex}
The geometric realization of a simplicial complex $|K|$ is a regular cell complex, where the \emph{interior} of a simplex is regarded as a cell.
In fact, every regular cell complex can be refined so that each cell is the interior of a triangulation~\cite[III.1.7]{lundell1969topology}.
\end{ex}

\begin{ex}[Non-example]
 The open interval $(0,1)$ decomposed with only one open cell is \emph{not} a cell complex. Its one-point compactification is the circle decomposed with one vertex $\{\infty\}$ and one edge $(0,1)$ whose attaching map is not an embedding, thus contradicting the fourth axiom. Further refining the decomposition of $(0,1)$ to $(0,\frac{1}{2})\cup\{\frac{1}{2}\}\cup(\frac{1}{2},1)$ gives an example of a cell complex. 
\end{ex}

\begin{defn}
The third property of a cell complex $(|X|,X)$ endows $X$ with the structure of a poset, where $\sigma\leq\tau$ if and only if $|\sigma|\subseteq \Bar{|\tau|}$. 
This is the \define{face relation poset} of $(|X|,X)$.
Note that this poset is naturally graded by dimension, where the dimension of $\sigma$ refers to the dimension of the open ball $B^i$ that $|\sigma|$ is homeomorphic to. 
We will write $\sigma^i$ if the dimension of $\sigma$ is $i$. 
The set $X^i$ will be the subset of $X$ of dimension $i$ cells.
\end{defn}

\begin{rmk}\label{rmk:quotient-topology}
The Alexandrov topology on the face-relation poset arises quite naturally.
Consider the quotient topology on $|X|$ that identifies two points $x\sim x'$ if they belong to the same cell $|\sigma|$.
This defines a map
\[
  \xymatrix{|X| \ar[d]^{q} \\ X:=|X|/\sim}
\]
that is continuous if we declare the open stars $U_{\sigma}:=\{\tau\in X\,|\, \sigma\leq\tau\}$ to be open. 
See Figure~\ref{fig:alex_interval} for an example.
\end{rmk}

\begin{figure}[ht]
  \centering
  \includegraphics[width=.5\textwidth]{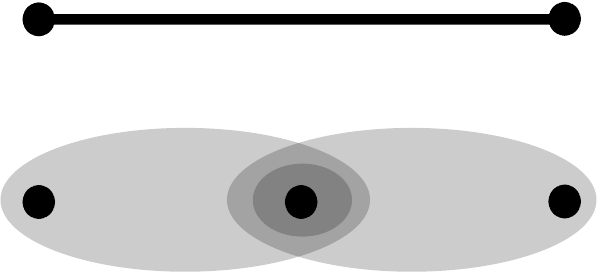}
  \caption{Interval with three cells and associated Alexandrov space}
  \label{fig:alex_interval}
\end{figure}

\begin{defn}\label{defn:cell-sheaves}
A \define{cellular sheaf} $\sheaf{F}$ on $X$ is a functor $\sheaf{F}:X\to\dat$.
Similarly, a \define{cellular cosheaf} $\cosheaf{F}$ is a functor $\cosheaf{F}:X^{\op}\to\dat$.   
Understanding that Corollary~\ref{cor:equivalence} applies, we will abuse notation and refer to the functor categories $[X,\dat]$ and $[X^{\op},\dat]$ as $\Shv(X;\dat)$ and $\Coshv(X;\dat)$, respectively.
Unless otherwise indicated, we will assume that $\dat=\Vect_{\Bbbk}$ and write $\Shv(X)$ and $\Coshv(X)$ for short.
\end{defn}

\begin{rmk}
Unbeknownst to MacPherson, Sir Christopher Zeeman outlined a similar definition in his thesis~\cite{dihom_1}, but used the term ``stack.''
Zeeman's choice of terminology was unfortunate, since his term is now used for a more general concept.
\end{rmk}

\begin{rmk}\label{rmk:3reasons}
There are multiple reasons why these functors can be viewed as (co)sheaves.
The simplest reason---and the one detailed in Proposition~\ref{prop:equivalence}---is that these functors define (co)sheaves on the Alexandrov topology of the face-relation poset.
A second reason is that if a sheaf is locally constant when restricted to each cell, then one can pushforward and pullback this sheaf along the map $q:|X|\to X$ described in Remark~\ref{rmk:quotient-topology} to obtain an isomorphic sheaf.
A third reason is that (co)sheaves that are constructible with respect to a cell structure are equivalent to cellular (co)sheaves~\cite{AFT,curry-patel-CCC,kash-rh,treumann-stacks}.
\end{rmk}

\begin{ex}
Consider the unit interval $|X|=[0,1]$ with the following cell structure: $|x|=0$, $|y|=1$, and $|a|=(0,1)$. 
Then a cellular sheaf (left) or a cellular cosheaf (right) on this cell complex is completely determined by choosing three vector spaces and two linear maps for each incidence relation.
\[
  \xymatrix{& F(a) & \\ F(x) \ar[ur]^{\rho_{a,x}} & & \ar[lu]_{\rho_{a,y}} F(y)}
  \qquad
  \xymatrix{& \cosheaf{F}(a) \ar[dl]_{r_{x,a}} \ar[dr]^{r_{y,a}} & \\ \cosheaf{F}(x) & & \cosheaf{F}(y)}
\]
\end{ex}

One reason for studying cellular sheaves and cosheaves simultaneously is that studying a celluar sheaf over a triangulated manifold automatically yields a cellular cosheaf over the dual triangulation.
See Figure~\ref{fig:dual-sheaf-cosheaf}.

\begin{prop}[Poincar\'e Duality]\label{thm:mfld_sheaf_cosheaf}
 Suppose $F$ is a cellular sheaf on a triangulated closed $n$-manifold $X$, then $F$ defines a cellular cosheaf $\cosheaf{F}$ on the dual triangulation $\tilde{X}$.
\end{prop}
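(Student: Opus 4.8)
The plan is to reduce the statement to the purely combinatorial fact that the face-relation poset of the dual triangulation $\tilde{X}$ is \emph{anti-isomorphic} to the face-relation poset of $X$; once this is in hand, the construction of $\cosheaf{F}$ from $F$ is a mere relabeling. Recall that by Definition~\ref{defn:cell-sheaves} a cellular sheaf on $X$ is a functor $F\colon X\to\dat$, whereas a cellular cosheaf on $\tilde{X}$ is a functor $\cosheaf{F}\colon \tilde{X}^{\op}\to\dat$. Thus it suffices to produce an isomorphism of posets $d\colon X\xrightarrow{\cong}\tilde{X}^{\op}$ and then set $\cosheaf{F}:=F\circ d^{-1}$.

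First I would recall the construction of the dual cell structure. Passing to the barycentric subdivision $\mathrm{sd}(X)$, assign to each simplex $\sigma^{k}$ of $X$ its \emph{dual block} $D(\sigma)$, namely the union of those simplices of $\mathrm{sd}(X)$ whose minimal vertex is the barycenter $\hat{\sigma}$. Because $X$ is a closed $n$-manifold, the link of $\sigma$ is homeomorphic to an $(n-k-1)$-sphere, and coning off this link shows that $D(\sigma)$ is homeomorphic to a closed $(n-k)$-ball attached along its boundary as required by Definition~\ref{defn:reg-cell}; hence the collection $\{D(\sigma)\}_{\sigma\in X}$ assembles into a regular cell complex $\tilde{X}$ with $\dim\tilde{\sigma}=n-\dim\sigma$, where I write $\tilde{\sigma}$ for the cell $D(\sigma)$.

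The key incidence property is the order reversal $\sigma\leq\tau \iff \tilde{\tau}\leq\tilde{\sigma}$: a simplex $\sigma$ is a face of $\tau$ exactly when the barycenter $\hat{\tau}$ lies in the closure of $D(\sigma)$, i.e.~exactly when $\tilde{\tau}$ is a face of $\tilde{\sigma}$ in $\tilde{X}$. Unwinding the definition of the opposite poset, this says precisely that $d\colon\sigma\mapsto\tilde{\sigma}$ is an isomorphism $X\xrightarrow{\cong}\tilde{X}^{\op}$. Composing, $\cosheaf{F}:=F\circ d^{-1}\colon \tilde{X}^{\op}\to\dat$ is the desired cellular cosheaf; concretely, for a face relation $\sigma\leq\tau$ in $X$ the sheaf restriction $\rho_{\tau,\sigma}\colon F(\sigma)\to F(\tau)$ becomes, under $d$, the extension map $r_{\tilde{\tau},\tilde{\sigma}}\colon\cosheaf{F}(\tilde{\sigma})\to\cosheaf{F}(\tilde{\tau})$ of $\cosheaf{F}$ along the face relation $\tilde{\tau}\leq\tilde{\sigma}$, exactly as required.

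The main obstacle is contained entirely in the middle step: verifying that each dual block $D(\sigma)$ is genuinely a cell (homeomorphic to a ball and correctly attached) and that the blocks partition $|X|$. This is where the closed-manifold hypothesis is indispensable, since it is what guarantees that links are spheres, so that the dual blocks are balls rather than more general spaces. Once $\tilde{X}$ is known to be a regular cell complex carrying the order-reversing incidence relation above, the passage from the sheaf $F$ to the cosheaf $\cosheaf{F}$ is formal.
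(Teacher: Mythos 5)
Your proof is correct and takes essentially the same route as the paper: the paper's entire argument is the formal relabeling in your final paragraph, namely that the order-reversing correspondence $\sigma\leq\tau \iff \tilde{\tau}\leq\tilde{\sigma}$ lets the single diagram $F\colon X\to\Vect$ be read as a cosheaf $\cosheaf{F}\colon\tilde{X}^{\op}\to\Vect$ with $\cosheaf{F}(\tilde{\sigma})=F(\sigma)$ and $r_{\tilde{\tau},\tilde{\sigma}}=\rho_{\tau,\sigma}$. Your middle step constructing the dual block decomposition goes beyond the paper, which takes the dual triangulation as given in the statement; be aware, though, that your claim that links are spheres (hence dual blocks are balls) is only valid for PL/combinatorial triangulations, since non-PL triangulations of closed topological manifolds exist in which links are merely homology spheres.
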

\begin{proof}
Let a superscript tilde indicate the dual cell, i.e.~$\tilde{\sigma}$ is the cell dual to $\sigma$.
If we have a pair $\sigma\leq\tau$ in $X$, then $\tilde{\tau}\leq\tilde{\sigma}$ in $\tilde{X}$.
We can now define the cellular cosheaf $\cosheaf{F}$ as follows.
On dual cells, we can simply define $\cosheaf{F}(\tilde{\sigma}):=F(\sigma)$ and $\cosheaf{F}(\tilde{\tau}):=F(\tau)$.
The extension maps are also defined by $r_{\tilde{\tau},\tilde{\sigma}}:=\rho_{\tau,\sigma}$.
Said diagramatically, we have the following:
\[
 \xymatrix{F(\sigma^i) \ar[r]^{\rho_{\tau,\sigma}} \ar@{=}[d] & F(\tau^{i+1}) \ar@{=}[d] \\
\cosheaf{F}(\tilde{\sigma}^{n-i}) \ar[r]^{r_{\tilde{\tau},\tilde{\sigma}}} & \cosheaf{F}(\tilde{\tau}^{n-i-1})}
\]
Said simply, the same abstract diagram of vector spaces $F:X\to\Vect$ defines a diagram over $\cosheaf{F}:\tilde{X}^{\op}\to\Vect$, i.e. a cosheaf on the dual cell structure.
\end{proof}


\begin{figure}[ht]
  \centering
  \includegraphics[width=.5\textwidth]{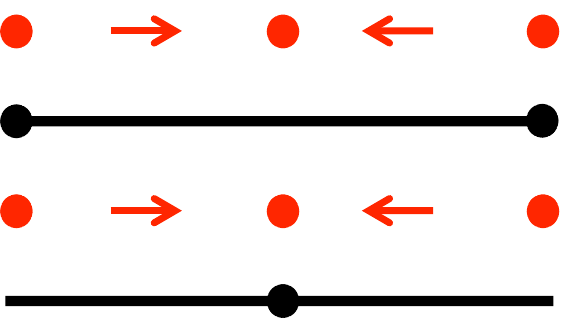}
  \caption{A cellular sheaf defines a cellular cosheaf over the dual cell structure}
  \label{fig:dual-sheaf-cosheaf}
\end{figure}
\subsection{Cellular Sheaf Cohomology}
\label{subsec:cell-sheaf-cohomology}

One of the pleasant features of cellular sheaves is that their cohomology is no more difficult to compute than ordinary cellular cohomology of a cell complex. 
One simply replaces cellular cochains valued in the field $\Bbbk$ with cellular cochains valued in the collection of $\Bbbk$-vector spaces $\{F(\sigma)\}$.
One can view the formulas provided below as simplified versions of formula that \v{C}ech cohomology would provide.


Let's recall one of the basic facts about cell complexes. We write $\sigma\leq_i\tau$ if the difference in dimension between $\tau$ and $\sigma$ is $i$.

\begin{lem}\label{lem:2cells}
If $\sigma\leq_2\tau$, then there are exactly two cells $\lambda_1,\lambda_2$ where $\sigma\leq_1\lambda_i\leq_1\tau$. Said diagrammatically:
\[
  \xymatrix{ & \tau & \\ \lambda_1 \ar[ru] & & \lambda_2 \ar[lu]\\ & \ar[lu] \sigma \ar[ru] &}
\]
\end{lem}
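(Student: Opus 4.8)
Plan. The statement is the classical ``diamond'' (thinness) property of the face poset of a regular cell complex, and I would prove it by localizing the question to the boundary of $\tau$. Passing to the one-point compactification if necessary, we may assume $(|X|,X)$ is a regular cell complex; this changes nothing, since $\sigma$, $\tau$, and every cell between them are genuine cells of finite dimension, and the statement only concerns their incidence relations. First I would record that dimension strictly increases along strict relations: if $\mu<\nu$ then $|\mu|\subseteq|\bar\nu|$ while $|\mu|\cap|\nu|=\emptyset$, so $|\mu|$ lies in the boundary $|\bar\nu|\setminus|\nu|$, which has dimension $\dim\nu-1$. Consequently, writing $m=\dim\sigma$, every cell $\lambda$ with $\sigma<\lambda<\tau$ automatically satisfies $\dim\lambda=m+1$, that is $\sigma\leq_1\lambda\leq_1\tau$. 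Thus the lemma is equivalent to the assertion that there are \emph{exactly two} such $(m+1)$-cells.

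Second, I would pass to the boundary sphere of $\tau$. By the fourth axiom there is a homeomorphism $\varphi:\bar B^{m+2}\to|\bar\tau|$ carrying the interior onto $|\tau|$, so the set $\Sigma:=|\bar\tau|\setminus|\tau|=\varphi(S^{m+1})$ is homeomorphic to $S^{m+1}$. By the third axiom the cells meeting $|\bar\tau|$ are exactly the faces of $\tau$, and each proper face $\mu<\tau$ is disjoint from $|\tau|$ and hence contained in $\Sigma$; so $\Sigma$ inherits a regular cell structure whose cells are precisely $\{\mu:\mu<\tau\}$. In this $(m+1)$-dimensional complex $\sigma$ has codimension one, and the $(m+1)$-cells $\lambda$ with $\sigma<\lambda<\tau$ are exactly the top-dimensional cells of $\Sigma$ whose closure contains $\sigma$.

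The crux is therefore a local claim: in a regular cell decomposition of a manifold (here the sphere $\Sigma\cong S^{m+1}$), a codimension-one cell lies in the closure of exactly two top-dimensional cells. I would prove this by a local homology computation at a point $p$ in the interior of $|\sigma|$. By the third axiom the cells of $\Sigma$ whose closures contain $p$ are precisely those $\nu$ with $\sigma\leq\nu$; since $\Sigma$ has top dimension $m+1$, these are $\sigma$ itself together with the incident top cells $\lambda$. Because the complement of the $m$-skeleton of $\Sigma$ is the disjoint union of the open top cells, a small punctured neighborhood of $p$ meets only these incident $\lambda$; excision and the cellular chain complex then express $H_{m+1}(\Sigma,\Sigma\setminus\{p\})$ in terms of the incident top cells and their incidence numbers with $\sigma$. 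As $\Sigma$ is a closed $(m+1)$-manifold this group is $\ZZ$, which forces exactly two incident top cells (appearing with opposite incidence numbers). Regularity---the embeddedness of each closed top cell, forced by the fourth axiom---guarantees these two cells are distinct, since a single cell cannot carry $\sigma$ on two sides of its boundary sphere.

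I expect the main obstacle to be precisely this last step: pinning down that the count is two, neither more nor fewer, without assuming local flatness of $\sigma$ inside $\Sigma$. The local-homology argument sidesteps flatness by invoking only that $\Sigma$ is a genuine manifold, so the delicate point to verify with care is that a small punctured neighborhood of $p$ decomposes cellularly in such a way that $H_{m+1}(\Sigma,\Sigma\setminus\{p\})$ is computed solely by the top cells whose closures contain $\sigma$. Once that identification is secured, the manifold condition $H_{m+1}=\ZZ$ delivers the count, and reassembling the pieces yields exactly the two cells $\lambda_1,\lambda_2$ with $\sigma\leq_1\lambda_i\leq_1\tau$.
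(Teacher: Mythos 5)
Your proposal is correct, but it takes a genuinely different route from the paper. The paper disposes of the regular case in one line by citing Cooke and Finney (p.\ 31) and only argues the reduction: a cell complex has a regular one-point compactification, and neither $\lambda_1$ nor $\lambda_2$ can be the added vertex at infinity when $\sigma\in X$. You instead prove the regular case from scratch: strict monotonicity of dimension along face relations pins every intermediate cell to dimension $\dim\sigma+1$; axioms (3) and (4) exhibit $\Sigma=|\bar\tau|\setminus|\tau|$ as a regular subcomplex homeomorphic to $S^{m+1}$ in which $\sigma$ has codimension one; and the count of incident top cells falls out of the local homology $H_{m+1}(\Sigma,\Sigma\setminus\{p\})\cong\ZZ$ at an interior point $p$ of $|\sigma|$ --- the classical pseudomanifold (thinness) argument, essentially the one hidden inside the citation. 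Your handling of the non-compact case ("changes nothing," since the cell at infinity is a $0$-cell and so cannot lie strictly between $\sigma$ and $\tau$ by your dimension-monotonicity remark) matches the paper's observation exactly. What your route buys is self-containedness; what it costs is that the step you rightly flag as delicate rests on two standard regular-CW facts: the identification of $H_{m+1}(\Sigma,\Sigma\setminus\{p\})$ with $H_{m+1}$ of the relative cellular complex of $(\Sigma,L)$, where $L$ is the closed subcomplex of cells not containing $\sigma$ in their closure (via a deformation retraction of $\Sigma\setminus\{p\}$ onto $L$, standard after barycentric subdivision, since regular complexes are triangulated by the order complex), and the fact that codimension-one incidence numbers in a regular complex are $\pm 1$ --- the latter is genuinely needed, since without it a single incident top cell with incidence number $0$ would also give kernel $\ZZ$, and it is what makes your "opposite incidence numbers" conclusion and the distinctness of $\lambda_1,\lambda_2$ precise. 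Neither fact presupposes the diamond property, so there is no circularity; both live in the same chapter of Cooke--Finney that the paper cites, which is a fair summary of the trade-off between the two proofs.
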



\begin{proof}
The statement is well known for a regular cell complex~\cite[p.~31]{cooke1967homology}.
Now, if $(|X|,X)$ is a cell complex, then it has a regular one-point compactification and neither cell $\lambda_1$ nor $\lambda_2$ can be the point at infinity if $\sigma$ is in $X$.
\end{proof}

As is the case in cellular homology, we need a sign condition that distinguishes these two different sequences of incidence relations.

\begin{defn}\label{defn:signed-reln}
 A \textbf{signed incidence relation} is an assignment to any pair of cells $\sigma,\tau\in X$ a number $[\sigma:\tau]\in\{0,\pm 1\}$ such that 
 \begin{itemize}
 \item if $[\sigma:\tau]\neq 0$, then $\sigma\leq_1 \tau$, and
 \item if $\gamma$ and $\tau$ are any pair of cells, the sum $\sum_{\sigma}[\gamma:\sigma][\sigma:\tau]=0$.
 \end{itemize}
\end{defn}

One way to get such a signed incidence relation is to choose a local orientation (via the homeomorphism of each cell $|\sigma|$ with $\RR^k$) for each cell without regard to global consistency. Lemma~\ref{lem:2cells} then guarantees that the second property of Definition~\ref{defn:signed-reln} is satisfied.

We can now provide formulae for computing cellular sheaf cohomology and cellular cosheaf homology that is completely analogous to cellular cohomology.

\begin{defn}[\cite{shepard,dihom_1}]\label{defn:cpt-cohom}
 Given a cellular sheaf $F:X\to\Vect$ we define its \textbf{compactly supported co-chains} in degree $n$ to be the product of the vector spaces residing over all the $n$-dimensional cells.
\[
 C^n_c(X;F)=\bigoplus_{\sigma\in X^n}F(\sigma)
\]
 These vector spaces are graded components in a complex of vector spaces $C^{\bullet}_c(X;F)$. The differentials are defined by
\[
 \delta^n_c=\sum_{\sigma\leq\tau} [\sigma^n:\tau^{n+1}]\rho_{\tau,\sigma}.
\]
The cohomology of the complex $C^{\bullet}_c(X;F)$
\[
 \xymatrix{0 \ar[r] & \oplus F(\mathrm{vertices}) \ar[r]^-{\delta_c^0} & \oplus F(\mathrm{edges}) \ar[r]^-{\delta_c^1} & \oplus F(\mathrm{faces}) \ar[r] & \cdots }
\]
is defined to be the \textbf{compactly supported cohomology} of $F$, written $H^n_c(X;F)$.
\end{defn}

\begin{lem}
 $(C^{\bullet}_c(X;F),\delta^{\bullet}_c)$ is a chain complex.
\end{lem}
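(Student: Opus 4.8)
The plan is to verify the single identity $\delta^{n+1}_c \circ \delta^n_c = 0$, since the grading and the morphisms are already packaged into the definition of $C^{\bullet}_c(X;F)$. I would begin by unwinding this composite one summand at a time. Fix an $n$-cell $\sigma$ and an $(n+2)$-cell $\tau$; the part of $\delta^{n+1}_c \circ \delta^n_c$ that sends $F(\sigma)$ into $F(\tau)$ is obtained by summing over all intermediate $(n+1)$-cells $\lambda$ with $\sigma\leq_1\lambda\leq_1\tau$, yielding
\[
\sum_{\sigma\leq_1\lambda\leq_1\tau} [\lambda^{n+1}:\tau^{n+2}]\,[\sigma^n:\lambda^{n+1}]\;\rho_{\tau,\lambda}\circ\rho_{\lambda,\sigma}.
\]

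The key structural step is to invoke functoriality of the cellular sheaf $F:X\to\Vect$. Because $F$ is an honest functor on the face-relation poset, the composite of restriction maps along $\sigma\leq\lambda\leq\tau$ agrees with the single map assigned to $\sigma\leq\tau$; that is, $\rho_{\tau,\lambda}\circ\rho_{\lambda,\sigma}=\rho_{\tau,\sigma}$ for every intermediate $\lambda$. This is exactly where the sheaf being a genuine functor, rather than merely a diagram of maps, enters, and it lets me factor the common map out of the sum, leaving the scalar coefficient $\sum_{\sigma\leq_1\lambda\leq_1\tau}[\sigma:\lambda][\lambda:\tau]$ multiplying the single morphism $\rho_{\tau,\sigma}$.

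It then remains to show this coefficient vanishes, and here I would combine Lemma~\ref{lem:2cells} with the defining property of the signed incidence relation. By Lemma~\ref{lem:2cells}, for $\sigma\leq_2\tau$ there are exactly two interpolating cells $\lambda_1,\lambda_2$, so the sum has only two terms; and the second bullet of Definition~\ref{defn:signed-reln}, applied with $\gamma=\sigma$, states precisely that $\sum_{\lambda}[\sigma:\lambda][\lambda:\tau]=0$, the only nonzero contributions coming from $\lambda$ with $\sigma\leq_1\lambda\leq_1\tau$. Hence the coefficient of $\rho_{\tau,\sigma}$ is zero for every pair $\sigma\leq_2\tau$; since $F(\tau)$-components indexed by cells $\sigma$ that are not codimension-two faces of $\tau$ receive no contribution at all, the full composite $\delta^{n+1}_c\circ\delta^n_c$ vanishes.

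I do not anticipate a genuine obstacle, as the whole argument mirrors the classical proof that cellular cochains form a complex: the scalars $\Bbbk$ of ordinary cellular cohomology are replaced by the vector spaces $F(\sigma)$, with the sign cancellation supplied by Lemma~\ref{lem:2cells} and Definition~\ref{defn:signed-reln} and the coefficient structure preserved by functoriality. The only point requiring care is bookkeeping, namely tracking which restriction map and which sign attach to each incidence and confirming that the functorial collapse $\rho_{\tau,\lambda}\circ\rho_{\lambda,\sigma}=\rho_{\tau,\sigma}$ is applied consistently across all intermediate cells.
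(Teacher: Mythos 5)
Your proof is correct and is precisely the argument the paper intends: its one-line proof cites Definition~\ref{defn:signed-reln} together with linearity/functoriality of the restriction maps, which is exactly the computation you carry out in detail (factoring $\rho_{\tau,\lambda}\circ\rho_{\lambda,\sigma}=\rho_{\tau,\sigma}$ and killing the scalar $\sum_{\lambda}[\sigma:\lambda][\lambda:\tau]$ via the second bullet of the definition). Your appeal to Lemma~\ref{lem:2cells} is harmless but strictly redundant, since the vanishing of that sum is already an axiom of the signed incidence relation; the lemma is only needed to show such a relation exists.
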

\begin{proof}
The result is a consequence of Definition~\ref{defn:signed-reln} along with linearity of the maps $\rho_{\tau,\sigma}$.
\end{proof}

To define the cellular sheaf cohomology of $F$, we simply remove all the cells from $X$ without compact closures and apply the same formula.

\begin{defn}[Ordinary Cohomology]\label{defn:ord-cohom}
    Let $X$ be a cell complex and $F:X\to\Vect$ a cellular sheaf. Let $j:X'\to X$ be the subcomplex consisting of cells that do not have vertices in the one-point compactification of $X$. Define the ordinary cochains and cohomology by
\[
  C^{\bullet}(X;F):=C^{\bullet}_c(X';j^*F) \qquad H^i(X;F):=H^i_c(X';j^*F)
\]
\end{defn}

\begin{rmk}
We say that Definition~\ref{defn:ord-cohom} is the ``ordinary cohomology'' because it is natural from the perspective of limits and injective resolutions, as we'll see in Section~\ref{sec:derived_sheaf_cohom}.
\end{rmk}

\begin{ex}[Compactly Supported vs. Ordinary Cohomology]
Consider the example of the half-open interval $X=[0,1)$ decomposed as $x=\{0\}$ and $a=(0,1)$. 
Now consider the constant sheaf $\Bbbk_X$. 
To compute compactly supported cohomology, we must first pick a local orientation of our space. By choosing the orientation that points to the right, we get that $[x:a]=-1$. 
The cohomology of our sheaf is computed via the complex
\[
  \xymatrix{\Bbbk \ar[r]^{-1} & \Bbbk},
\]
which yields $H^0_c=H^1_c=0$. 
If we follow the prescription for computing ordinary cellular sheaf cohomology, then we must remove the vector space sitting over $a$ in our computation. 
The resulting complex is simply the vector space $\Bbbk$ placed in degree 0, so $H^0(X;\Bbbk_X)=\Bbbk$ and all higher cohomologies vanish.
\end{ex}

\subsection{Cellular Cosheaf Homology}
\label{subsec:cell-cosheaf-homology}

For cellular cosheaves the exact dual construction works, but we use the term ``Borel-Moore'' in place of ``compactly supported.''
This is in part to honor Borel and Moore, whose landmark paper~\cite{borel1960homology} implicitly contains the cosheaf axiom.

\begin{defn}[Borel-Moore Cosheaf Homology]\label{defn:BM-hom}
 Let $X$ be a cell complex and let $\cosheaf{F}:X^{\op}\to\Vect$ be a cellular cosheaf. 
 Define the \textbf{Borel-Moore homology of} $H_{\bullet}^{BM}(X;\cosheaf{F})$ to be the homology of the complex $C^{BM}_{\bullet}(X;\cosheaf{F})$:
\[
 \xymatrix{ \cdots \ar[r] & \oplus \cosheaf{F}(\mathrm{faces}) \ar[r]^-{\partial_2} & \oplus \cosheaf{F}(\mathrm{edges}) \ar[r]^-{\partial_1} & \oplus \cosheaf{F}(\mathrm{vertices}) \ar[r] & 0 }
\]
\end{defn}

\begin{defn}[Ordinary Cosheaf Homology]
  Let $X$ be a cell complex and let $\cosheaf{F}:X^{\op}\to\Vect$ be a cellular cosheaf. By discarding all the cells without compact closure, we obtain the maximal compact subcomplex $X'$. If we write $j:X'\hookrightarrow X$ for the inclusion, then we can define the ordinary chain complex to be
  \[
  C_{\bullet}(X;\cosheaf{F})=C^{BM}_{\bullet}(X';j^*\cosheaf{F}).
  \]
  Applying Definition~\ref{defn:BM-hom} then yields \textbf{cosheaf homology} $H_{\bullet}(X;\cosheaf{F})$ of $\cosheaf{F}$.
\end{defn}

\section{The Derived Category of Cellular (Co)Sheaves}
\label{sec:derived_sheaf_cohom}

There is a surprising symmetry in the land of cellular sheaves and cosheaves.
The category of sheaves over a topological space $X$ only has, in general, enough injective objects, i.e.~every sheaf admits an injective resolution.
In Section~\ref{subsec:no-proj}, a topological criterion is given to determine when this is the case.
However, since the categories of cellular sheaves and cosheaves are equivalent to the functor categories $[X,\Vect]$ and $[X^{\op},\Vect]$, we have both enough injectives and projectives.
Recall that, when there are enough injectives and projectives, the bounded derived category of an abelian category can be described as the homotopy category of chain complexes of injective or projective objects~\cite[Thm 6.7]{aluffi}.
In Section~\ref{subsec:elem-inj-proj} we describe a simple class of injective and projective objects---they are constant (co)sheaves supported on the closure and open star of a cell.
In Section~\ref{subsec:suff-elem} we prove that every injective or projective object decomposes into a direct sum of these.

\subsection{No Projective Sheaves in General}
\label{subsec:no-proj}

Although the existence of enough injective sheaves is largely due to the algebraic properties of the data category $\dat$, the existence of enough projectives is driven by geometry of the underlying space.

\begin{prop}[\cite{MO-proj-sheaves}]\label{prop:no-proj}
 Suppose $X$ is a topological space with the property that there is a point $x\in X$ such that for every open neighborhood $U\ni x$ there is a strictly smaller open neighborhood $V\subset U$. Then the category of sheaves on $X$ does not have enough projectives.
\end{prop}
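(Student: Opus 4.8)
The plan is to argue by contradiction: I suppose some nonzero sheaf $P$ admits a projective cover (or simply assume every sheaf is a quotient of a projective, and apply this to a cleverly chosen sheaf), and then exhibit a surjection onto which no lift exists, using the descending chain $U \supset V \supset \cdots$ of neighborhoods at the distinguished point $x$. The key object to test projectivity against will be the skyscraper-type sheaf $\skyshv_x$ supported at $x$, whose stalk records data only at $x$; the obstruction comes from the fact that a section over $P$ near $x$ must ``live'' on some honest open set, but the strict nesting hypothesis means no smallest open set exists, so the section cannot be pinned down.

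First I would recall the definition of projectivity: $P$ is projective if for every surjection of sheaves $\pi:\cF\twoheadrightarrow\cG$ and every map $\phi:P\to\cG$, there is a lift $\tilde\phi:P\to\cF$ with $\pi\circ\tilde\phi=\phi$. Second, I would set up the diagnostic surjection. For the point $x$ and the cofinal sequence $U_0\supset U_1\supset\cdots$ guaranteed by hypothesis, I would build a sheaf $\cF$ as a suitable (co)product of constant sheaves $\Bbbk_{U_n}$ supported on the successive neighborhoods, mapping onto the skyscraper $\skyshv_x$ at $x$; the point is that every section of $\skyshv_x$ over a neighborhood of $x$ lifts only on some $U_n$, never on a single smallest set. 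Third, I would take any hypothetical projective $P$ surjecting onto $\skyshv_x$ and show the required lift cannot be constructed: a lift would force a section of $P$ to factor through a single $U_n$, but by the strict-nesting hypothesis one can always pass to $U_{n+1}$, producing a section of $P$ whose image under $\pi$ is not captured — contradicting surjectivity at the stalk level.

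The main obstacle — and the heart of the argument — is the careful construction of the surjection $\cF\twoheadrightarrow\skyshv_x$ that is genuinely epic as a map of sheaves (surjective on stalks) yet admits \emph{no} lift through the stalk at $x$. The delicate point is that sheaf epimorphisms are detected on stalks, so I must verify surjectivity stalkwise while simultaneously ensuring the \emph{global} lifting problem has no solution; the strict descending chain is precisely what decouples these two, since the colimit defining the stalk at $x$ has no final object among the $U_n$. I expect the cleanest route is to take $\cF=\bigoplus_n \Bbbk_{U_n}$ (or its sheafification), where each summand surjects onto the stalk but the direct sum has no single section restricting compatibly to all stages, so that any candidate map $P\to\cF$ splitting $\pi$ must land in finitely many summands and therefore fails to be epic onto $\skyshv_x$. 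Once this surjection-without-lift is in hand, projectivity of $P$ is contradicted, and since $P$ was an arbitrary projective surjecting onto $\skyshv_x$, the category cannot have enough projectives.
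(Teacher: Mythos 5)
Your overall setup---testing projectivity against a surjection onto the skyscraper sheaf $i_*\Bbbk$ at $x$, built out of extension-by-zero constant sheaves on small neighborhoods of $x$---is the same as the paper's, but your mechanism for the contradiction does not work. The claim at the heart of your third step, that a lift $\tilde\phi\colon P\to\bigoplus_n j_!\Bbbk_{U_n}$ ``must land in finitely many summands and therefore fails to be epic onto $\skyshv_x$,'' is not an obstruction at all: epimorphy onto a skyscraper is detected on the single stalk at $x$, and one summand already suffices, since $j_!\Bbbk_{U_0}\to i_*\Bbbk$ is itself stalk-surjective at $x$. A lift landing in a single summand is therefore perfectly compatible with $\pi=\text{(projection)}\circ\tilde\phi$ being epic, so no contradiction arises. (Two further problems: a morphism of sheaves into an infinite direct sum need not factor through a finite subsum in the first place; and the hypothesis yields a strictly descending chain of neighborhoods of $x$ but \emph{not} a countable cofinal one, so your appeal to ``the cofinal sequence guaranteed by hypothesis'' is unfounded---though in the correct argument no chain is needed at all.)

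What actually closes the argument---and this is the paper's proof---is to evaluate sections on a \emph{fixed} open set rather than to chase summands or stalks. Since $P\to i_*\Bbbk$ is epic on stalks and the map is nonzero only at $x$, there is an open $U\ni x$ with $P(U)\to i_*\Bbbk(U)=\Bbbk$ nonzero. The hypothesis is invoked exactly once: choose an open neighborhood $V$ of $x$ with $V\subsetneq U$, and use projectivity of $P$ to lift the given map through the surjection $j_!\Bbbk_V\to i_*\Bbbk$. Evaluating the resulting commuting triangle on $U$ gives a factorization $P(U)\to j_!\Bbbk_V(U)\to i_*\Bbbk(U)$, and the paper's key point is that $j_!\Bbbk_V(U)=0$, so the nonzero map $P(U)\to\Bbbk$ factors through zero---a contradiction. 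The missing idea in your proposal is precisely this: a \emph{single} sheaf $j_!\Bbbk_V$ whose section space over the strictly larger $U$ vanishes even though its stalk at $x$ surjects onto the skyscraper's. Your direct sum over a descending chain never exploits the strict inclusion $V\subsetneq U$ at the level of section spaces, which is where the content of the hypothesis actually lies; this is exactly the decoupling of stalkwise surjectivity from the global lifting problem that you correctly identified as the delicate point, but did not resolve.
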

\begin{proof}
 Consider the map $i:x\hookrightarrow X$ and the skyscraper sheaf $i_*k$. 
 Suppose there is a projective sheaf $P$ and a surjection $P\to i_*k$. 
In the category of sheaves, this equivalent to the map on stalks $P_y\to (i_*k)_y$ being a surjection for every point $y\in X$.
This map is non-zero only when $y=x$, which implies that there is an open set $U\ni x$ such that $P(U)\to i_* k(U)$ is non-zero.
By our hypothesis there is another open set $V\subsetneq U$.
Consider the constant sheaf with value $k$ extended by zero on $V$, which we denote by $j_!k_V$. 
Note that we have the following diagram of sheaves
\[
 \xymatrix{j_!k_V \ar[r] & i_*k \ar[r] & 0\\ & P \ar@{.>}[lu] \ar[u] &}
\]
whose value on the open set $U$ is
\[
 \xymatrix{j_!\tilde{k}_V(U)=0 \ar[r] & i_*k(U)=k \ar[r] & 0\\ & P(U) \ar@{.>}[lu] \ar[u] &}
\]
so in particular the surjection must factor through zero, which contradicts the assumption that $P(U)\to i_* k(U)$ is non-zero.
\end{proof}

\subsection{Elementary Injectives and Projectives}
\label{subsec:elem-inj-proj}
 
\begin{defn}
Let $i_{\sigma}:\star \to X$ be the poset map that takes the unique element $\star$ to the cell $\sigma$ in $X$.
A cellular sheaf is an \define{elementary injective sheaf} if it is of the form $(i_{\sigma})_*W=:[\sigma]^W$ for some vector space $W$.
A cellular cosheaf is an \define{elementary projective cosheaf} if it is of the form $(i_{\sigma})_*\cosheaf{W}=:[\widehat{\sigma}]^W$ for some vector space $W$.
When the vector space $W$ is not specifified, it is assumed to be the ground field $\Bbbk$.
\end{defn}

\begin{rmk}[Supported on the Closure]
In order to avoid tracing through the definition of the appropriate Kan extension, we give a simple formula that tells you the value of these (co)sheaves on a cell $\tau$:
\begin{equation*}
[\sigma]^W(\tau)=[\widehat{\sigma}]^W(\tau)=
\begin{cases}
W & \text{if } \tau\leq \sigma,\\
0 & \text{other wise.}
\end{cases}
\end{equation*}
In other words, these (co)sheaves are supported on the closure $\bar{\sigma}$ of a cell $\sigma$.
\end{rmk}

\begin{lem}
The sheaf $[\sigma]^W=(i_{\sigma})_*W$ is injective and the cosheaf $[\cosheaf{\sigma}]^W=(i_{\sigma})_*\cosheaf{W}$ is projective.
\end{lem}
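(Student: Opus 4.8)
The plan is to prove injectivity of $[\sigma]^W = (i_\sigma)_* W$ by exhibiting an adjunction that characterizes maps into it, and then to observe that the defining condition for injectivity reduces to a statement about vector spaces, which is automatic because every short exact sequence of vector spaces splits. The projectivity of $[\widehat{\sigma}]^W$ will follow by the formally dual argument.

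First I would invoke the adjunction $(i_\sigma^*, (i_\sigma)_*)$ established in Subsection~\ref{subsec:adjunctions} for the poset map $i_\sigma:\star\to X$. This gives, for any cellular sheaf $G$, a natural isomorphism
\[
\Hom_{\Shv(X)}\bigl(G,\, (i_\sigma)_* W\bigr) \cong \Hom_{\Shv(\star)}\bigl(i_\sigma^* G,\, W\bigr) = \Hom_{\Vect}\bigl(G(\sigma),\, W\bigr),
\]
where I have used that $i_\sigma^* G$ is just the stalk $G(\sigma)$ and that a sheaf on the one-point poset $\star$ is a single vector space. Thus the functor $\Hom_{\Shv(X)}(-,[\sigma]^W)$ is naturally isomorphic to the composite of the exact stalk functor $G\mapsto G(\sigma)$ with the functor $\Hom_{\Vect}(-,W)$.

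The key step is then to verify exactness. Given a short exact sequence $0\to A\to B\to C\to 0$ of cellular sheaves, the stalk functor at $\sigma$ is exact (since exactness of sheaves is checked cellwise, and evaluation at $\sigma$ preserves exactness), so $0\to A(\sigma)\to B(\sigma)\to C(\sigma)\to 0$ is exact in $\Vect$. Applying $\Hom_{\Vect}(-,W)$ to a short exact sequence of vector spaces yields a short exact sequence, because $W$ is injective in $\Vect$ (every object of $\Vect_{\Bbbk}$ is both injective and projective). Composing these two exact functors shows $\Hom_{\Shv(X)}(-,[\sigma]^W)$ is exact, which is precisely injectivity of $[\sigma]^W$.

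For the cosheaf statement I would run the same argument dualized: using the cosheaf adjunction $(f^*, f_*)$, I obtain
\[
\Hom_{\Coshv(X)}\bigl([\widehat{\sigma}]^W,\, \cosheaf{G}\bigr) \cong \Hom_{\Vect}\bigl(W,\, \cosheaf{G}(\sigma)\bigr),
\]
so that $\Hom_{\Coshv(X)}([\widehat{\sigma}]^W,-)$ is the composite of the exact costalk functor $\cosheaf{G}\mapsto\cosheaf{G}(\sigma)$ with the exact functor $\Hom_{\Vect}(W,-)$, whence $[\widehat{\sigma}]^W$ is projective. I expect the only subtle point to be confirming that $i_\sigma^*$ genuinely computes the stalk $(-)(\sigma)$ and that the cosheaf version of the adjunction points in the correct direction; once that bookkeeping is settled, exactness is immediate from the fact that everything in sight is a vector space.
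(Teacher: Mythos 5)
Your proposal is correct and takes essentially the same route as the paper: both hinge on the adjunction $\Hom_{\Shv(X)}(A,(i_{\sigma})_*W)\cong\Hom_{\Vect}(A(\sigma),W)$ (and its cosheaf dual) to reduce injectivity and projectivity to statements in $\Vect$. The paper's explicit basis-extension step is exactly your appeal to the injectivity of $W$ in $\Vect$, so your packaging via exactness of the composite of the evaluation functor with $\Hom_{\Vect}(-,W)$ is only a cosmetic reformulation of the same argument.
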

\begin{proof}
In order to check that the sheaf $[\sigma]^W=(i_{\sigma})_*W$ is injective, it suffices to chase the appropriate adjunction. To wit, if $\iota: \sheaf{A}\to \sheaf{B}$ is a monomorphism and $\eta:\sheaf{I}\to (i_{\sigma})_*W$ is any morphism, the adjunction says that
\[
    \Hom_{\Shv(X)}(A,(i_{\sigma})_*W)\cong \Hom_{\Vect} (A(\sigma),W)
\]
and hence the sheaf map $\eta$ is determined by the vector space map $\eta(\sigma):A(\sigma)\to W$.
Since $\iota:A\to B$ is a monomorphism in $[X,\Vect]$ the map $A(\sigma)\to B(\sigma)$ is a monomorphism. 
Fixing a basis for $A(\sigma)$ and extending it to a basis for $B(\sigma)$ allows us to define an extended map $\tilde{\eta}(\sigma):B(\sigma)\to W$, which by following the above adjunction in reverse gives a sheaf map $\tilde{\eta}:B\to (i_{\sigma})_*W$. A similar argument can be used to prove that $(i_{\sigma})_*\cosheaf{W}$ is projective.
\end{proof}

What is less commonly noted, and for good reason given Proposition~\ref{prop:no-proj}, are projective sheaves and injective cosheaves.

\begin{defn}
Let $i_{\sigma}:\star \to X$ be the map that assigns to the one element $\star$ the value $\sigma\in X$ and let $W$ be a vector space.
A cellular sheaf is an \define{elementary projective sheaf} if it is of the form $(i_{\sigma})_{\dd}W$.
Similarly, a cellular cosheaf is an \define{elementary injective cosheaf} if it is of the form $(i_{\sigma})_{\dd}\widehat{W}$. 
When the vector space $W$ is not specifified, it is assumed to be the ground field $\Bbbk$.
\end{defn}

\begin{rmk}[Supported on the Open Star]
Again, we provide a formula for what cells get what vector spaces.
\begin{equation*}
\{\sigma\}^W(\tau)=\{\widehat{\sigma}\}^W(\tau)=
\begin{cases}
W & \text{if } \sigma\leq \tau,\\
0 & \text{other wise.}
\end{cases}
\end{equation*}
In other words, these (co)sheaves are supported on the open star of $\sigma$.
\end{rmk}

We leave it as an exercise for the reader to check that the above sheaf is actually projective and the above cosheaf is actually injective.

\subsection{Sufficiency of Elementary Objects}
\label{subsec:suff-elem}

For a finite cell complex, the above described objects are sufficient for understanding all injective and projective objects. The following argument is a generalization of Shepard's~\cite[Thm.~1.3.2]{shepard}.

\begin{lem}\label{lem:inj}
Let $X$ be the face-relation poset of a finite cell complex. Every injective or projective sheaf (or cosheaf) is isomorphic to the direct sum of elementary injectives or elementary projectives, respectively.
\end{lem}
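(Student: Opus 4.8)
The plan is to prove the statement for projective sheaves in full and to obtain the other three cases formally: the injective-sheaf case is dual, and both cosheaf statements follow by applying the sheaf statements to the opposite poset, since $\Coshv(X)=[X^{\op},\Vect]=\Shv(X^{\op})$ and passing to $X^{\op}$ interchanges the closure $\bar\sigma$ with the open star of $\sigma$, hence interchanges the two families of elementary objects so that a (co)sheaf statement on $X$ becomes a sheaf statement on $X^{\op}$. The engine of the proof is an additive functor that detects a single elementary summand. For a cellular sheaf $F$ and a cell $\sigma$, define the \emph{generator space}
\[
G_\sigma(F):=\coker\Big(\textstyle\bigoplus_{\rho<\sigma}F(\rho)\xrightarrow{(\rho_{\sigma,\rho})}F(\sigma)\Big),
\]
the part of $F(\sigma)$ not filled in by restriction maps from lower cells. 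This is additive in $F$, and a direct check gives $G_\sigma(\{\tau\}^W)=W$ when $\tau=\sigma$ and $0$ otherwise: for $\tau<\sigma$ the identity restriction maps of $\{\tau\}^W$ already surject onto its value at $\sigma$, while $\tau\not\le\sigma$ makes that value $0$.

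First I would build a comparison map. Choosing for each $\sigma$ a linear lift realizing $V_\sigma:=G_\sigma(P)$ as a subspace of $P(\sigma)$, the inclusions $V_\sigma\hookrightarrow P(\sigma)$ are adjoint, under the $(f_{\dd},f^*)$ adjunction for $i_\sigma:\star\to X$, to sheaf maps $\{\sigma\}^{V_\sigma}\to P$, which assemble into
\[
\Phi:\ Q:=\textstyle\bigoplus_{\sigma\in X}\{\sigma\}^{V_\sigma}\longrightarrow P.
\]
An induction on dimension shows $\Phi$ is an epimorphism for \emph{any} sheaf $P$: at a cell $\tau$ the image of $\Phi_\tau$ contains $V_\tau$, and, by the inductive hypothesis together with functoriality $\rho_{\tau,\rho}\rho_{\rho,\sigma}=\rho_{\tau,\sigma}$, it also contains $\sum_{\rho<\tau}\im\rho_{\tau,\rho}$; these span $P(\tau)$ by the defining property of $V_\tau$.

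The projectivity of $P$ enters only now. Since $\Phi$ is an epimorphism onto a projective object it splits, so $Q\cong P\oplus K$ with $K=\ker\Phi$. Applying the additive functors $G_\sigma$ and using $G_\sigma(Q)=V_\sigma=G_\sigma(P)$ (the first equality from the computation above and additivity, the second by construction) yields $G_\sigma(K)=0$ for every $\sigma$. But a sheaf all of whose generator spaces vanish is zero: $G_\sigma(K)=0$ reads $K(\sigma)=\sum_{\rho<\sigma}\im\rho_{\sigma,\rho}$, which is $0$ at the vertices and, by induction on dimension, $0$ everywhere. Hence $K=0$, $\Phi$ is an isomorphism, and $P\cong\bigoplus_\sigma\{\sigma\}^{V_\sigma}$ is a direct sum of elementary projectives. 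For injective sheaves I would run the mirror argument with the \emph{socle} functor $S^\sigma(F):=\bigcap_{\tau>\sigma}\ker\rho_{\tau,\sigma}\subseteq F(\sigma)$, for which $S^\sigma([\tau]^W)=W$ exactly when $\tau=\sigma$; here the maps $I\to[\sigma]^{S^\sigma(I)}$ adjoint to chosen retractions $I(\sigma)\twoheadrightarrow S^\sigma(I)$ assemble into a monomorphism, injective by downward induction from the top cells, which splits because $I$ is injective, and the complement has vanishing socle everywhere, hence is zero.

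The hard part is pinning down the split complement $K$; everything else is either formal or the easy surjectivity statement, which holds for all sheaves. The whole content is therefore concentrated in the additive functor $G_\sigma$ (resp.\ $S^\sigma$): it is additive, it sees exactly one elementary object, and it annihilates any sheaf with no new generators (no new socle), so that comparing its values on $P$ and on $Q$ forces $K=0$. The payoff of this formulation is that it needs neither a Krull--Schmidt argument nor any finiteness assumption on the stalks $F(\sigma)$; it uses only that $X$ is a finite graded poset, so that the dimension inductions terminate.
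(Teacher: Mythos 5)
Your proof is correct, and it takes a genuinely different route from the one in the paper. The paper peels off one elementary summand at a time: it inducts on the number of cells supporting $I$, picks a cell $\sigma$ of maximal dimension with $I(\sigma)=V\neq 0$, extends the skyscraper map $S_\sigma^V\to I$ across $S_\sigma^V\hookrightarrow[\sigma]^V$ using injectivity, observes the resulting $\tilde{\eta}:[\sigma]^V\to I$ is a (split) monomorphism, and passes to $\cok(\tilde{\eta})$, which has strictly smaller support; the projective and cosheaf cases are handled by mirrored arguments. You instead build the entire candidate decomposition in one stroke, with multiplicity spaces supplied by the generator functor $G_\sigma$ (resp.\ the socle functor $S^\sigma$), prove the comparison map is epi (resp.\ mono) for \emph{arbitrary} sheaves, invoke projectivity (resp.\ injectivity) exactly once to split it, and kill the complement with the additive detection functors. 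This buys you canonical multiplicities $V_\sigma\cong G_\sigma(P)$ --- hence essentially the uniqueness of the decomposition, which the paper's inductive peeling leaves implicit --- a single use of the lifting/extension property rather than one per summand, and an argument that uses only that $X$ is a finite poset, so the cosheaf cases really are the sheaf cases on $X^{\op}$ under $\Coshv(X)=[X^{\op},\Vect]$ (the paper instead says ``dualizing appropriately''). One step you should tighten: from $Q\cong P\oplus K$ and the abstract isomorphisms $G_\sigma(Q)\cong V_\sigma\cong G_\sigma(P)$ alone you cannot conclude $G_\sigma(K)=0$ when $V_\sigma$ is infinite-dimensional, since $V\cong V\oplus\Bbbk$ is possible; what you need --- and what is immediate from your construction, because $\Phi$ restricted to the summand $\{\sigma\}^{V_\sigma}$ is the chosen lift of the quotient $P(\sigma)\to G_\sigma(P)$ --- is that the specific map $G_\sigma(\Phi):G_\sigma(Q)\to G_\sigma(P)$ is an isomorphism, so that applying the additive $G_\sigma$ to the split exact sequence $0\to K\to Q\to P\to 0$ gives $G_\sigma(K)=\ker G_\sigma(\Phi)=0$. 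This matters precisely because you advertise that no finiteness of the stalks is required; with that one sentence added (and its mirror for $S^\sigma(\Psi)$ in the injective case), the advertisement is justified.
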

\begin{proof}
We prove that every injective sheaf is a direct sum of elementary injectives.
Assume for induction that every injective sheaf $I$ that is non-zero on at most $k\leq n-1$ cells is isomorphic to $\oplus_{\sigma}[\sigma]^{V_{\sigma}}$. 
Now consider an injective sheaf that is non-zero on exactly $n$ cells. 
Let $\sigma$ be a cell of maximal dimension where $I(\sigma)=:V\neq 0$. 
Since $I$ is zero on all higher cells incidence to $\sigma$, there is a non-zero map $\eta$ from the skyscraper sheaf $S^V_{\sigma}$ to $I$ with $\eta(\sigma)=\id_V$. 
There is also a non-zero map $\iota:S_{\sigma}^V\to[\sigma]^V$. 
This gives us a diagram
\[
 \xymatrix{0 \ar[r] & S_{\sigma}^V \ar[r]^{\iota} \ar[d]_{\eta} & [\sigma]^V \ar@{.>}[dl]^{\exists \tilde{\eta}} \\ & I &}
\]
and the implicated existence of a map $\tilde{\eta}:[\sigma]^V\to I$. 
If $\tau\leq \sigma$, then by the fact that $\tilde{\eta}$ is a sheaf map,
\[
\id_V=\tilde{\eta}(\sigma)\circ \rho^{[\sigma]}_{\sigma,\tau}=\rho^{I}_{\sigma,\tau}\circ \tilde{\eta}(\tau)
\]
the map $\tilde{\eta}$ is injective. 
Since every short exact sequence where the first object is injective splits, we can deduce that $I\cong [\sigma]^V\oplus \cok(\tilde{\eta})$. 
Since $\cok(\tilde{\eta})$ is zero wherever $I$ is and also zero on $\sigma$, it is non-zero on at most $n-1$ cells and the induction hypothesis applies. 
The zero sheaf is clearly equal to a direct sum of elementary injectives with the zero vector space, which checks the base case, completing the induction.

A similar argument applies to any projective sheaf $P$ where one instead considers the cell $\sigma$ of minimal dimension that $P$ is supported on. Repeating the argument with an elementary projective $\{\sigma\}^V\to S_{\sigma}^V$ provides a surjection $P\to \{\sigma\}^V$, which causes $P$ to split. Dualizing appropriately gives the analogous statements for cellular cosheaves.
\end{proof}







\section{The Derived Definitions of (Co)Homology}
\label{sec:derived-homology}

The standard definition of sheaf cohomology takes an injective resolution of $F$, applies $p_*$ to the injective resolution to obtain a complex of vector spaces and then takes cohomology of this complex.
For the face-relation poset of a cell complex, the categories $\Shv(X)$ and $\Coshv(X)$ have both enough injectives and projectives.
For a cellular sheaf, this allows us to either take an injective resolution and apply $p_*$ or to take a projective resolution and apply $p_{\dd}$ and a similar story holds for cosheaves.
This gives rise to four possible theories of interest---two for cell sheaves and two for cell cosheaves.
The two ``extra'' theories appear to be novel in the literature, although there are related ideas in Bredon~\cite{Bredon}.
It is an open question whether or not the theory of cellular sheaf homology presented here approximates Bredon's.
In Section~\ref{subsec:cosheaf-homology-colimit} we prove that taking a projective resolution of a cosheaf $\cosheaf{F}$ and applying $p_*$ gives a complex that is quasi-isomorphic to the complex of Section~\ref{subsec:cell-cosheaf-homology}.
In Section~\ref{subsubsec:subdivision} we use the derived definitions presented below to prove that these homology theories are invariant under subdivision.

\begin{defn}\label{defn:homology-theories}
Let $p:X\to\star$ denote the constant map. 
Let $I^{\bullet}$ be an injective and $P^{\bullet}$ a projective resolution of a cellular sheaf $\sheaf{F}$. 
Similarly, let $\cosheaf{I}^{\bullet}$ be an injective and $\cosheaf{P}^{\bullet}$ a projective resolution of a cellular cosheaf $\cosheaf{F}$. We define the
\begin{itemize}
\item[-] $i^{th}$ \define{cellular sheaf cohomology} $H^i(X;\sheaf{F})$ to be $H^i(p_* \sheaf{I}^{\bullet})$,
\item[-] $i^{th}$ \define{cellular cosheaf homology} $H_i(X;\cosheaf{F})$ to be $H_i(p_*\cosheaf{P}^{\bullet})$,
\item[-] $i^{th}$ \define{cellular sheaf homology} $H_i(X;\sheaf{F})$ to be $H_i(p_{\dd} \sheaf{P}^{\bullet})$, and
\item[-] the $i^{th}$ \define{cellular cosheaf cohomology} $H^i(X;\cosheaf{F})$ to be $H^i(p_{\dd} \cosheaf{I}^{\bullet})$.
\end{itemize}
\end{defn}

By using Poincar\'e duality, we can show these latter two theories have content by relating them to the expected ones.

\begin{prop}[Homological Poincar\'e Duality]
\label{prop:homological-poincare-duality}
Suppose $X$ is the poset of cells of a triangulation of a closed $n$-manifold. Suppose $\sheaf{F}$ is a cellular sheaf on $X$ and $\cosheaf{F}$ is the cellular cosheaf determined on the dual triangulation $\tilde{X}$ as defined in Proposition~\ref{thm:mfld_sheaf_cosheaf}, then
\[
H^i(X;\sheaf{F})=H^i(\tilde{X};\cosheaf{F}) \qquad \mathrm{and} \qquad H_i(X;\sheaf{F})=H_i(\tilde{X};\cosheaf{F}).
\]
\end{prop}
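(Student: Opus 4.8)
The plan is to promote the object-level Poincar\'e duality of Proposition~\ref{thm:mfld_sheaf_cosheaf} to an isomorphism of abelian categories and then to show that the derived pushforwards of Definition~\ref{defn:homology-theories} are interchanged by it. First I would record that the assignment $\sigma\mapsto\tilde{\sigma}$ is an order-reversing bijection of the face poset $X$ onto the dual face poset $\tilde{X}$ (this is exactly $\sigma\leq\tau \Leftrightarrow \tilde{\tau}\leq\tilde{\sigma}$ from Proposition~\ref{thm:mfld_sheaf_cosheaf}), so that $\tilde{X}\cong X^{\op}$ as posets and hence, by Corollary~\ref{cor:equivalence},
\[
\Coshv(\tilde{X})=[\tilde{X}^{\op},\Vect]\cong[X,\Vect]=\Shv(X).
\]
Call the resulting functor $D:\Shv(X)\to\Coshv(\tilde{X})$. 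It is precomposition with a poset isomorphism $\tilde{X}^{\op}\cong X$, hence an isomorphism of abelian categories, and on objects it is precisely the recipe $\sheaf{F}\mapsto\cosheaf{F}$ of Proposition~\ref{thm:mfld_sheaf_cosheaf}, since $\cosheaf{F}(\tilde{\sigma})=\sheaf{F}(\sigma)$ and $r_{\tilde{\tau},\tilde{\sigma}}=\rho_{\tau,\sigma}$.

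Being an isomorphism of abelian categories, $D$ is exact and carries injective (resp. projective) objects to injective (resp. projective) objects. One can see this concretely using Lemma~\ref{lem:inj} and the support descriptions: $D$ sends the elementary injective sheaf $[\sigma]^W$ to the elementary injective cosheaf $\{\widehat{\tilde{\sigma}}\}^W$ and the elementary projective sheaf $\{\sigma\}^W$ to the elementary projective cosheaf $[\widehat{\tilde{\sigma}}]^W$, because the closure $\bar{\sigma}$ in $X$ corresponds cell-by-cell to the open star of $\tilde{\sigma}$ in $\tilde{X}$ and vice versa. Consequently, if $\sheaf{I}^{\bullet}$ is an injective resolution of $\sheaf{F}$ then $D\sheaf{I}^{\bullet}$ is an injective resolution of $\cosheaf{F}$, and likewise $D$ takes projective resolutions of $\sheaf{F}$ to projective resolutions of $\cosheaf{F}$.

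The heart of the argument is to match the global functors. Writing $p:X\to\star$ and $\tilde{p}:\tilde{X}\to\star$ for the two constant maps, I would use the limit/colimit formulas of Sections~\ref{subsec:pushforward} and \ref{subsec:open-pushforward} to establish
\[
\tilde{p}_{\dd}\circ D = p_* \qquad \text{and} \qquad \tilde{p}_*\circ D = p_{\dd}
\]
as functors $\Shv(X)\to\Vect$. Indeed $p_*\sheaf{F}=\varprojlim\sheaf{F}$ while $\tilde{p}_{\dd}\cosheaf{F}=\varprojlim\cosheaf{F}$, and under the identification $\cosheaf{F}=D\sheaf{F}$ along $\tilde{X}^{\op}\cong X$ these are the limit of the same diagram; dually $p_{\dd}\sheaf{F}=\varinjlim\sheaf{F}=\varinjlim\cosheaf{F}=\tilde{p}_*\cosheaf{F}$. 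I expect this bookkeeping to be the main obstacle: the content is exactly that the duality trades the ordinary pushforward $p_*$ (a limit) for the dagger pushforward $\tilde{p}_{\dd}$ (also a limit), and $p_{\dd}$ (a colimit) for $\tilde{p}_*$ (also a colimit), and it is easy to pair these up backwards if one is careless with the opposite-category conventions.

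With these ingredients the two equalities follow by chaining the identifications. For cohomology, choose an injective resolution $\sheaf{I}^{\bullet}\to\sheaf{F}$; then $D\sheaf{I}^{\bullet}$ resolves $\cosheaf{F}$ by injectives and
\[
H^i(\tilde{X};\cosheaf{F})=H^i(\tilde{p}_{\dd}D\sheaf{I}^{\bullet})=H^i(p_*\sheaf{I}^{\bullet})=H^i(X;\sheaf{F}).
\]
For homology, choose a projective resolution $\sheaf{P}^{\bullet}\to\sheaf{F}$, so $D\sheaf{P}^{\bullet}$ resolves $\cosheaf{F}$ by projectives and
\[
H_i(\tilde{X};\cosheaf{F})=H_i(\tilde{p}_*D\sheaf{P}^{\bullet})=H_i(p_{\dd}\sheaf{P}^{\bullet})=H_i(X;\sheaf{F}),
\]
which completes the plan. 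The point worth emphasizing is that this is a degree-\emph{preserving} duality, reflecting that $\cosheaf{F}$ is nothing but $\sheaf{F}$ relabeled along $\sigma\mapsto\tilde{\sigma}$, and that cosheaf cohomology on $\tilde{X}$ computes the same derived limit as sheaf cohomology on $X$.
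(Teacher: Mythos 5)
Your proposal is correct and takes essentially the same approach as the paper: both proofs rest on the observation that under the order-reversing bijection $\sigma\mapsto\tilde{\sigma}$ the sheaf $\sheaf{F}$ and the cosheaf $\cosheaf{F}$ are literally the same diagram of vector spaces, with the elementary injective sheaf $[\sigma]^W$ becoming the elementary injective cosheaf $\{\widehat{\tilde{\sigma}}\}^W$ and the elementary projective sheaf $\{\sigma\}^W$ becoming the elementary projective cosheaf $[\widehat{\tilde{\sigma}}]^W$, so that resolutions transport across the identification. Your explicit functor identities $\tilde{p}_{\dd}\circ D=p_*$ and $\tilde{p}_*\circ D=p_{\dd}$ simply spell out the bookkeeping that the paper compresses into the phrase that the two diagrams are the same.
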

\begin{proof}
The proof is immediate from the fact that the two diagrams of vector spaces are the same.
In more detail, the injective sheaf $[\sigma]^W$ can also be viewed as the injective cosheaf $\{\widehat{\tilde{\sigma}}\}^W$ on the dual triangulation, hence an injective resolution of $\sheaf{F}$ can be viewed as an injective resolution of $\cosheaf{F}$ on the dual triangulation.
Similarly, the projective sheaf $\{\sigma\}^W$ can also be viewed as the projective cosheaf $[\widehat{\tilde{\sigma}}]^W$ over the dual triangulation.
\end{proof}

\subsection{Cosheaf Homology}
\label{subsec:cosheaf-homology-colimit}

In Sections~\ref{subsec:cell-sheaf-cohomology} and~\ref{subsec:cell-cosheaf-homology} cellular sheaf cohomology and cellular cosheaf homology were defined without using injective or projectives. Shepard outlines a proof that derived functors of the limit agree with the formula presented in~\ref{subsec:cell-sheaf-cohomology}.
Below, we prove that derived functors of the colimit agree with the formulas of~\ref{subsec:cell-cosheaf-homology}.

\begin{thm}
The left derived functors of $p_{*}$ agree with the computational formula for homology, i.e. $L_i p_{*}\widehat{F}=H_i(X;\widehat{F})$.
\end{thm}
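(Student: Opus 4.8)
The plan is to prove the identification through the universal property of left derived functors rather than by grinding through a single chosen resolution. Write $T_i(\cosheaf{F}) := H_i(X;\cosheaf{F})$ for the homology of the cellular chain complex $C_\bullet(X;\cosheaf{F})$ of Section~\ref{subsec:cell-cosheaf-homology}. First I would check that $\{T_i\}_{i\geq 0}$ is a homological $\delta$-functor: each chain group $C_n(X;\cosheaf{F})=\bigoplus_{\sigma}\cosheaf{F}(\sigma)$ is an exact functor of $\cosheaf{F}$, since evaluation at a cell and (finite) direct sums are exact on $\Coshv(X)=[X^{\op},\Vect]$. Hence a short exact sequence of cosheaves yields a short exact sequence of chain complexes, and therefore a long exact sequence in homology with natural connecting maps. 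Because cellular cosheaves have enough projectives, it then suffices to verify the two standard hypotheses pinning down $\{T_i\}$ as $\{L_i p_*\}$: that $T_0$ is the right-exact functor $p_*$, and that $T_i$ is coeffaceable, i.e.\ vanishes on projectives for $i>0$.

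For the degree-zero statement I would show $T_0(\cosheaf{F})=H_0(X;\cosheaf{F})\cong\varinjlim\cosheaf{F}=p_*\cosheaf{F}$ naturally. The point is that in a cell complex the covering relations of the face poset are exactly the codimension-one incidences (this uses the grading by dimension together with Lemma~\ref{lem:2cells}), so every relation defining the colimit $\varinjlim_{X^{\op}}\cosheaf{F}$ is already generated by the extension maps across codimension-one faces. That span of relations is precisely the image of $\partial_1$, so $\coker\partial_1\cong\varinjlim\cosheaf{F}$, which is the asserted natural isomorphism and recovers the Global Sections example.

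For coeffaceability I would reduce to elementary projectives using Lemma~\ref{lem:inj}: every projective cosheaf is a direct sum of the $[\widehat{\sigma}]^W$, and both $T_i$ and $p_*$ commute with (finite) direct sums, so it is enough to treat $P=[\widehat{\sigma}]^{\Bbbk}$ for a cell $\sigma$ with compact closure. Such a $P$ takes the value $\Bbbk$ on the closed cell $\bar{\sigma}$ with identity extension maps, so $C_\bullet(X;[\widehat{\sigma}]^{\Bbbk})$ is literally the cellular chain complex of $\bar{\sigma}$ with constant coefficients. Since $\bar{\sigma}$ is a closed ball, hence contractible, this complex is acyclic in positive degrees and has $H_0=\Bbbk=p_*[\widehat{\sigma}]^{\Bbbk}$, exactly as required. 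With these two facts and the $\delta$-functor structure, the uniqueness of the left derived functors of the right-exact $p_*=\varinjlim$ forces $T_i\cong L_i p_*$.

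The main obstacle I anticipate is the interaction of the degree-zero identification with the passage to the maximal compact subcomplex $X'$ that is built into the \emph{ordinary} (as opposed to Borel--Moore) homology of Section~\ref{subsec:cell-cosheaf-homology}. One must check that cells of non-compact closure contribute nothing new to $\varinjlim_{X^{\op}}\cosheaf{F}$---their values are absorbed through their extension maps into faces lying in $X'$---so that restricting along $j:X'\hookrightarrow X$ alters neither the colimit nor the homology, and correspondingly that the elementary projectives supported at non-compact cells, whose complexes become the cellular chains of the compact part $\bar{\sigma}\cap X'$, remain acyclic in positive degrees. The example $X=[0,1)$, where $H_0^{BM}=0$ while $\varinjlim\Bbbk_X=\Bbbk$, shows both that this compact truncation is essential and that the statement would be false for Borel--Moore homology; managing this truncation carefully is where the real work lies.
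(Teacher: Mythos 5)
Your proposal is correct, and it rests on the same two computational facts as the paper --- that the ordinary chain complex of an elementary projective cosheaf is acyclic in positive degrees (being the cellular chains of a contractible space), and that $H_0(X;\cosheaf{F})=\coker\,\partial_1\cong\varinjlim\cosheaf{F}=p_*\cosheaf{F}$ --- but it assembles them by a genuinely different mechanism. The paper takes a projective resolution $\cosheaf{P}_{\bullet}\to\cosheaf{F}$, forms the double complex $C_i(X;\cosheaf{P}_j)$ augmented by the column $\varinjlim\cosheaf{P}_{\bullet}$ and the row $C_{\bullet}(X;\cosheaf{F})$, and invokes the acyclic assembly lemma to produce quasi-isomorphisms through the totalization; you instead invoke the universal $\delta$-functor theorem: $\{H_i(X;-)\}$ is a homological $\delta$-functor because each $C_n(X;-)$ is exact on $[X^{\op},\Vect]$, it agrees with the right-exact $p_*$ in degree zero, and it is coeffaceable, hence coincides with $\{L_ip_*\}$. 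Your route is more standard and makes naturality automatic, avoiding the totalization bookkeeping, at the price of needing the long-exact-sequence axioms, which the paper never uses; the paper's double complex, conversely, needs no $\delta$-functor formalism and directly exhibits the quasi-isomorphism. Two steps of your sketch deserve the care the paper gives them. First, in degree zero, the claim that the colimit relations are ``precisely the image of $\partial_1$'' elides that the colimit is presented on \emph{all} cells while $C_0$ sees only vertices; as in the paper's proof, one must first absorb the value on each higher cell into its vertices (every cell of $X'$ has a vertex face) and then check that the residual relations among vertex classes are generated by edges. Second, coeffaceability must also cover elementary projectives $[\widehat{\sigma}]$ at cells \emph{without} compact closure, where $C_{\bullet}(X;[\widehat{\sigma}])$ computes the chains of $\bar{\sigma}\cap X'$ --- a closed simplex minus the open star of the vertex at infinity, still contractible; you initially restrict to compact closures but correctly flag and resolve this in your final paragraph, so this is presentational rather than a gap. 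Note finally that your appeal to Lemma~\ref{lem:inj} (stated only for finite complexes) can be dispensed with: coeffaceability needs only \emph{some} surjection onto each cosheaf from an object killed by the $T_i$, and $\bigoplus_{\sigma}[\widehat{\sigma}]^{\cosheaf{F}(\sigma)}\twoheadrightarrow\cosheaf{F}$ supplies one.
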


\begin{proof}
Begin with a projective resolution of $\widehat{P}_{\bullet}\to\widehat{F}$ and then take cellular chains of each cosheaf to obtain the following double complex:
\[
 \xymatrix{ & \vdots & \vdots & \vdots & \\
\cdots &  C_1(X;\widehat{P}_1) \ar[r]\ar[d] & C_1(X;\widehat{P}_0) \ar[r]\ar[d] & C_1(X;\widehat{F}) \ar[r]\ar[d] & 0 \\
\cdots & C_0(X;\widehat{P}_1)\ar[r]\ar[d] & C_0(X;\widehat{P}_0)\ar[r]\ar[d] & C_0(X;\widehat{F})\ar[r]\ar[d] & 0 \\
\cdots & \colim \widehat{P}_1 \ar[r] \ar[d]& \colim \widehat{P}_0\ar[r]\ar[d] & \colim \widehat{F}\ar[r]\ar[d] & 0 \\
& 0 & 0 & 0 &}
\]

\begin{lem}
 For $\widehat{P}$ a projective cosheaf 
$$H_i(C_{\bullet}^{BM}(X;\widehat{P}))\cong H_i(C_{\bullet}(X;\widehat{P}))\cong 0$$ for $i>0$.
\end{lem}
\begin{proof}
 Observe that we can assume that $\widehat{P}$ is an elementary projective co-sheaf with value $k$, i.e. $[\widehat{\sigma}]$, since $C_{\bullet}^{BM}(X;\oplus A_i)=\oplus C_{\bullet}^{BM}(X;A_i)$.

Everything follows from the following consequence of our definition of a cell complex: In the one-point compactification of $X$, the closure of any cell $\sigma\in X$, call it $|\bar{\bar{\sigma}}|$, has the homeomorphism type of a closed $n$-simplex.

$C_{\bullet}(X;[\widehat{\sigma}])$ is the chain complex that computes the cellular homology of $Y=|\{\tau\leq \sigma|\bar{\tau}\,\mathrm{is}\,\mathrm{compact}\}|$, which is a closed $n$-simplex minus the star of a vertex. On the other hand, $C^{BM}_{\bullet}(X;[\widehat{\sigma}])$ is equal to the chain complex calculating the cellular homology of $|\bar{\bar{\sigma}}|$ except in degree zero if $|\bar{\sigma}|$ is not compact. Notice that $H_1$ for both of these complexes is the same, as $|\bar{\sigma}|$ and $|\bar{\bar{\sigma}}|$ are simply connected. This proves the claim.
\end{proof}

\begin{lem}
 For any cellular cosheaf $\widehat{F}$ on a cell complex $X$ we have that $$\colim \widehat{F}\cong\cok(C_1(X;\widehat{F})\to C_0(X;\widehat{F})).$$
\end{lem}
\begin{proof}
 First let us prove that taking the coproduct of $\widehat{F}$ over all the cells obtains a vector space that surjects onto the colimit. As part of the definition of $\colim \widehat{F}$ is a choice of maps $\psi_{\sigma}:\widehat{F}(\sigma)\to\colim \widehat{F}$.  Let $\Psi=\oplus \psi_{\sigma}:\oplus \widehat{F}(\sigma)\to \colim \widehat{F}$, now consider the factorization of this map through the image:
\[
 \xymatrix{\oplus \widehat{F}(\sigma) \ar[rr]^{\Psi} \ar[rd] & & \colim \widehat{F} \\
& \im \Psi \ar[ru]^j &}
\]
Now we can use the $\im \Psi$ to define a new co-cone over the diagram $\widehat{F}$ simply by pre-composing the factorized map with the inclusions $i_{\sigma}:\widehat{F}(\sigma)\to\oplus \widehat{F}(\sigma)$. Since the colimit is the initial object in the category of co-cones, there must be a map $u:\colim \widehat{F}\to \im \Psi$ and thus $u\circ j=\id$ since there is only one map $\colim\widehat{F}\to\colim\widehat{F}$.

Now observe that $C_0(X;\widehat{F})=\oplus \widehat{F}(v_i)$ surjects onto the colimit of $\widehat{F}$ by virtue of the fact that since every cell $\sigma\in X$ has at least one vertex as a face, the map $\Psi$ factors through $\oplus \widehat{F}(v_i)$. Thus there is a surjection from $\Psi':C_0(X;\widehat{F})\to \colim\widehat{F}$. Notice that by universal properties of the cokernel of $\partial_0:C_1(X;\widehat{F})\to C_0(X;\widehat{F})$ it suffices to check that $\Psi'\circ\partial_0=0$. However, this is clear since every $e$ edge has two vertices $v_1$ and $v_2$ (we've discarded all those edges without compact closures), then we need only check the claim for each diagram of the form
\[
 \xymatrix{ & \widehat{F}(e) \ar[ld]_{r_{e,v_1}} \ar[rd]^{r_{e,v_2}} & \\
\widehat{F}(v_1) & & \widehat{F}(v_2)}
\]
where it is clear that the colimit can be written as $\widehat{F}(v_1)\oplus \widehat{F}(v_2)$ modulo the equivalence relation $(r_{e,v_1}(w),0)\simeq (0,r_{e,v_2}(w))$, i.e. 
$\partial_0|_e(w)=(-r_{e,v_1}(w),r_{e,v_2}(w))\simeq(0,0)$.
\end{proof}

From these two theorems we can conclude that the columns away from the chain complex of $\widehat{F}$ are exact and thus $\mathrm{Tot}_{\bullet}(C_i(X;\widehat{P}_j))$ induces quasi-isomorphisms between $\colim \widehat{P}_{\bullet}$ and $C_{\bullet}(X;\widehat{F})$. We have thus established the theorem.
\end{proof}

\subsection{Borel-Moore Cosheaf Homology}
\label{subsubsec:BM_cosheaf_homology}

\begin{defn}\label{defn:BM_cosheaf_homology}
 Suppose $\cosheaf{F}$ is a cellular cosheaf. Define $\Gamma^{BM}(X;\cosheaf{F})$ to be the colimit of the diagram extended over the one-point compactification of $X$ where we define $\cosheaf{F}(\infty)=0$.
\end{defn}

Now we can prove that the formula provided calculates the Borel-Moore homology of a cosheaf $\widehat{F}$ by establishing the following lemma:

\begin{lem}\label{lem:BM_cosheaf_homology}
 For any cellular cosheaf $\widehat{F}$ on a cell complex $X$ we have that $\Gamma^{BM}(X;\widehat{F})\cong \cok(C_1^{BM}(X;\widehat{F})\to C^{BM}_0(X;\widehat{F}))$.
\end{lem}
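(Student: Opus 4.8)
The plan is to reduce this Borel--Moore statement to the ordinary colimit computation of the preceding lemma by working over the one-point compactification. Let $X^{+}=X\cup\{\infty\}$ denote the one-point compactification, which by Definition~\ref{defn:cell-cmplx} is a regular cell complex, and let $\widehat{F}^{+}$ be the precosheaf on $X^{+}$ extending $\widehat{F}$ with $\widehat{F}^{+}(\infty)=0$. In the face poset of $X^{+}$ the new $0$-cell $\infty$ is minimal, so the only structure maps meeting it are the zero maps $\widehat{F}^{+}(\tau)\to\widehat{F}^{+}(\infty)=0$; hence $\widehat{F}^{+}$ is an honest cellular cosheaf on $X^{+}$, and by Definition~\ref{defn:BM_cosheaf_homology} we have $\Gamma^{BM}(X;\widehat{F})=\colim\widehat{F}^{+}$.

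Next I would apply the preceding lemma---which computes $\colim\widehat{F}$ as the cokernel $\cok(C_{1}(X;\widehat{F})\to C_{0}(X;\widehat{F}))$---to the pair $(X^{+},\widehat{F}^{+})$. Since $X^{+}$ is regular and compact, no cell is discarded in forming its ordinary chain complex, and every edge of $X^{+}$ has exactly two vertices in its boundary; these are precisely the structural facts that the previous proof used, so it applies verbatim and yields
\[
\colim\widehat{F}^{+}\;\cong\;\cok\bigl(C_{1}(X^{+};\widehat{F}^{+})\to C_{0}(X^{+};\widehat{F}^{+})\bigr).
\]

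It then remains to identify this cokernel with the Borel--Moore one. The compactification introduces only the single $0$-cell $\infty$ and no higher cells, so $(X^{+})^{1}=X^{1}$ while $(X^{+})^{0}=X^{0}\cup\{\infty\}$. Because $\widehat{F}^{+}(\infty)=0$, the $\infty$ summand contributes nothing and
\[
C_{1}(X^{+};\widehat{F}^{+})=\bigoplus_{e\in X^{1}}\widehat{F}(e)=C_{1}^{BM}(X;\widehat{F}),\qquad C_{0}(X^{+};\widehat{F}^{+})=\bigoplus_{v\in X^{0}}\widehat{F}(v)=C_{0}^{BM}(X;\widehat{F}).
\]
Taking the local orientations on $X^{+}$ to restrict those chosen on $X$, the boundary maps agree on every summand indexed by a cell of $X$, and the components landing over $\infty$ vanish; so the two cokernels coincide and the lemma follows.

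I expect the only real point to verify---more bookkeeping than genuine obstacle---to be this last step: confirming that passing to $X^{+}$ equips each formerly non-compact edge of $X$ with its missing second vertex at $\infty$, so that the Borel--Moore differential on $X$ is exactly the ordinary differential on $X^{+}$ after killing the zero stalk over $\infty$. This is also the conceptual role of the compactification, since it converts the Borel--Moore complex of $X$ into the ordinary complex of $X^{+}$, where the clean ``every edge has two vertices'' argument of the preceding lemma is available.
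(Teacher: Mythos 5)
Your proof is correct and is essentially the paper's own argument: the paper likewise realizes $\Gamma^{BM}(X;\widehat{F})$ as the colimit over the one-point compactification with $\widehat{F}(\infty)=0$, the only difference being packaging---it re-runs the previous lemma's proof and patches the one problematic step (an edge with a single vertex in $X$ acquires $\infty$ as its second endpoint, with the zero extension map), whereas you invoke that lemma as a black box on $(X^{+},\widehat{F}^{+})$ and then match chain groups. Both versions turn on the same two facts you isolate, namely that regularity of $X^{+}$ gives every edge two distinct vertices and that the zero stalk at $\infty$ kills the extra summand, so your reduction is a faithful (if slightly more systematic) rendering of the paper's proof.
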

\begin{proof}
 The proof above goes through until the last argument. Now we have edges $e$ with only one vertex. However, by extending and zeroing out at infinity to get that the colimit of
\[
 \xymatrix{ & \widehat{F}(e) \ar[ld]_{r_{e,v}} \ar[rd]^0 & \\
\widehat{F}(v) & & \widehat{F}(\infty)=0}
\]
is exactly equal to the co-equalizer of $r_{e,v}:\widehat{F}(e)\to F(v)$ and the zero morphism, i.e. the cokernel.
\end{proof}

\subsection{Invariance under Subdivision}
\label{subsubsec:subdivision}

One of the advantanges of using the derived perspective is that it is easy to prove that sheaf cohomology and cosheaf homology are invariant under subdivision by using only categorical methods.

\begin{defn}[\cite{shepard} 1.5, p.29]\index{subdivision}\index{cell complex!subdivision}
 A \textbf{subdivision} of a cell complex $X$ is a cell complex $X'$ with $|X'|=|X|$ and where every cell of $X$ is a union of cells of $X'$.
\end{defn}

Untangling the definition a bit we see that if $\sigma$ is a cell of $X$, then there is a collection of cells $\{\sigma_i'\}$ such that $\cup_i |\sigma_i'|=|\sigma|$. As such, we can define a surjective map of posets $s:X'\to X$ defined by making $s(\sigma')=\sigma$ if $|\sigma'|\subseteq |\sigma|$.

\begin{prop}
 Subdivision of a cell complex $X$ induces an order preserving map $s:X'\to X$ of the corresponding face-relation posets.
\end{prop}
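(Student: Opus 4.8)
The plan is to verify two things: that $s$ is well-defined as a function $X'\to X$, and that it is order-preserving for the face-relation posets. For well-definedness I would first observe that both $\{|\sigma|\}_{\sigma\in X}$ and $\{|\sigma'|\}_{\sigma'\in X'}$ are partitions of the common underlying space $|X|=|X'|$, and that the subdivision hypothesis---every cell of $X$ is a union of cells of $X'$---says exactly that the second partition refines the first. Concretely, fix a cell $\sigma'\in X'$ and a point $p\in|\sigma'|$; it lies in a unique cell $|\sigma|$ of $X$, and writing $|\sigma|$ as a union of cells of $X'$ forces $|\sigma'|$ to be one of them, whence $|\sigma'|\subseteq|\sigma|$. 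Uniqueness of such a $\sigma$ follows because distinct cells of $X$ are disjoint while $|\sigma'|\neq\emptyset$. This makes the assignment $s(\sigma'):=\sigma$ well-defined.

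For order-preservation I would unwind the definition of the face-relation order, namely $\alpha\leq\beta$ iff $|\alpha|\subseteq|\bar\beta|$. Suppose $\sigma'\leq\tau'$ in $X'$, so $|\sigma'|\subseteq|\overline{\tau'}|$, and set $\sigma=s(\sigma')$ and $\tau=s(\tau')$. Since $|\tau'|\subseteq|\tau|$ and topological closure is monotone, $|\overline{\tau'}|\subseteq|\bar\tau|$, and therefore $|\sigma'|\subseteq|\bar\tau|$. As $|\sigma'|\subseteq|\sigma|$ and $|\sigma'|$ is non-empty, the intersection $|\sigma|\cap|\bar\tau|$ is non-empty. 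I would then invoke the closure-incidence axiom (3) of Definition~\ref{defn:reg-cell}---the very axiom used to endow $X$ with its face-relation poset structure---to conclude $|\sigma|\subseteq|\bar\tau|$, i.e. $\sigma\leq\tau$. This is precisely $s(\sigma')\leq s(\tau')$.

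The main subtlety I expect is bookkeeping with closures in a possibly non-compact cell complex: axiom (3) is stated for \emph{regular} cell complexes, whereas $X$ is only required to be regular after one-point compactification. The point to check is that for cells $\sigma,\tau\in X$ (neither being the point at infinity) the intersection $|\sigma|\cap|\bar\tau|$ is insensitive to whether the closure is taken in $|X|$ or in its compactification, since $|\sigma|$ never contains the added point $\infty$; thus the regular-complex form of axiom (3), applied in the compactification, transfers to $X$. Once this is settled the argument above goes through verbatim, and the monotonicity of $s$ needs no further calculation.
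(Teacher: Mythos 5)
Your proof is correct and follows the same route as the paper's, whose entire argument is the two-line observation that $\sigma'\leq\tau'$ leads either to $s(\sigma')=s(\tau')$ or to subdivided cells of a pair $\sigma\leq\tau$; you supply exactly the justification the paper leaves implicit, namely well-definedness of $s$ from the partition-refinement property, monotonicity of closure combined with the closure-incidence axiom (3) of Definition~\ref{defn:reg-cell}, and the transfer of that axiom from the one-point compactification back to $X$ (valid since $|\sigma|$ misses $\infty$ and closures in a subspace are traces of closures in the ambient space). No gaps; your version is simply a fully detailed rendering of the paper's sketch.
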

\begin{proof}
 The ordering on $X'$ is given by the face relation. Suppose $\sigma'\leq \tau'$, then either $s(\sigma')=s(\tau')$ or not. If not, then $\sigma'$ and $\tau'$ belong to the subdivision of two cells $\sigma\leq \tau$.
\end{proof}

We are going to use this fact to define the subdivision of a sheaf in a cleaner manner than is found in~\cite{shepard}.

\begin{defn}
Let $\sheaf{F}$ or $\cosheaf{F}$ be a sheaf or cosheaf on $X$. For a subdivision $s:X'\to X$ of $X$ we get a \define{subdivided sheaf} $F':=s^*F$ or \define{subdivided cosheaf} $\cosheaf{F}':=s^*\cosheaf{F}$.
\end{defn}

\begin{ex}
Let $X$ be the unit interval $[0,1]$ stratified with $x=0$, $y=1$ and $a=(0,1)$. 
Consider the barycentric subdivision of $X$, which produces a third vertex $\bar{a}$ and two edges $a_x$ and $a_y$. 
Given a cellular sheaf $F$, the subdivided sheaf has $F'(\bar{a})=F'(a_x)=F'(a_y)=F(a)$ where we use the identity map for the two new restriction maps. 
Observe that if $F$ is the elementary injective sheaf $[a]$, then $F'$ is \emph{not} an injective sheaf, yet nevertheless $F'$ and $F$ have isomorphic cohomology.
\end{ex}

To understand the behavior of any of the homology theories associated to cellular sheaves or cosheaves, one need only consider the following diagram of posets
\[
 \xymatrix{X' \ar[rr]^s \ar[rd]_{p_{X'}} & & X\ar[ld]^{p_X}\\
        &\star&}
\]
and the associated pushforward functors. 
For instance, we can prove that cellular sheaf cohomology is invariant under subdivision in a different way than Shepard, cf.~\cite[Thm. 1.5.2]{shepard}.

\begin{thm}\label{thm:subdivision}
 Suppose $F$ is a sheaf on $X$ and $X'$ is a subdivision of $X$, then
\[
 H^{\bullet}(X;F)\cong H^{\bullet}(X';F')
\]
The analogous result holds for cosheaves.
\end{thm}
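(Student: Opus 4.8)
The plan is to exploit the factorization $p_{X'} = p_X \circ s$ recorded in the displayed triangle of posets and to compute cohomology as a derived pushforward along $p_X$. First I would observe that, since $s^*$ is exact and is left adjoint to $s_*$ (Section~\ref{subsec:adjunctions}), the functor $s_*$ preserves injectives. Hence for an injective resolution $J^{\bullet}\to F' = s^*F$ in $\Shv(X')$ the complex $s_* J^{\bullet}$ is a complex of injective sheaves on $X$ computing $Rs_*\, s^*F$, and
\[
H^{\bullet}(X';F') = H^{\bullet}(p_{X'*}J^{\bullet}) = H^{\bullet}(p_{X*}\, s_*J^{\bullet}) = H^{\bullet}\big(Rp_{X*}(Rs_*\, s^*F)\big).
\]
Thus the theorem reduces to showing that the derived unit $F \to Rs_*\, s^*F$ of the adjunction $(s^*,s_*)$ is a quasi-isomorphism; granting this, $H^{\bullet}(X';F')\cong H^{\bullet}(Rp_{X*}F)=H^{\bullet}(X;F)$.

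Next I would verify $F\simeq Rs_*\,s^*F$ cellwise. By the pointwise formula for the pushforward, the stalk at a cell $\sigma\in X$ is the derived limit
\[
(Rs_*\, s^*F)(\sigma) = R\varprojlim_{\,\sigma' :\, s(\sigma')\geq\sigma} F(s(\sigma')),
\]
indexed by the up-set $s^{-1}(U_\sigma)=\{\sigma'\in X' : s(\sigma')\geq\sigma\}$, on which the diagram is the pullback of $F|_{U_\sigma}$ along $s$. Since $U_\sigma$ has $\sigma$ as an initial object, $R\varprojlim_{U_\sigma} F|_{U_\sigma}=F(\sigma)$ with no higher terms, so it suffices to show that the restricted map of posets $s : s^{-1}(U_\sigma)\to U_\sigma$ is homotopy initial, i.e.\ that it identifies the two derived limits. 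By the cofinality theorem for homotopy limits (the limit form of Quillen's Theorem A, applicable because the coefficient system upstairs is literally $s^*(F|_{U_\sigma})$) this holds once each comma poset
\[
\{\sigma'\in X' : \sigma\leq s(\sigma')\leq\tau\}, \qquad \tau\geq\sigma,
\]
has contractible nerve.

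The main obstacle is precisely this contractibility. Using axiom~(3) of a regular cell complex one checks that $s(\sigma')\leq\tau$ is equivalent to $|\sigma'|\subseteq|\bar\tau|$, so the comma poset is the face poset of those cells of the subdivision $X'$ that subdivide the closed cell $|\bar\tau|$ and lie in the open star of $\sigma$. Its nerve is the corresponding piece of the barycentric subdivision, homeomorphic to the open star of $\sigma$ inside the ball $|\bar\tau|$ --- a star-shaped, hence contractible, region. Making this geometric reduction precise (handling cells without compact closure via the regular one-point compactification, exactly as in Lemma~\ref{lem:2cells} and the acyclicity arguments of Section~\ref{subsec:cosheaf-homology-colimit}) is the technical heart; everything else is formal.

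Finally, the analogous statements for the remaining three theories follow by the evident dualizations. For cosheaf homology one replaces $p_*$, $s_*$ and $R\varprojlim$ by their left-derived colimit counterparts and shows the derived counit $Ls_*\, s^*\cosheaf{F}\to\cosheaf{F}$ is a quasi-isomorphism, now using the same comma categories to see that $s$ is homotopy final; for sheaf homology and cosheaf cohomology one argues identically with $p_{\dd}$, $s_{\dd}$ and the adjunction $(s_{\dd},s^*)$. In every case the single contractibility lemma supplies the required acyclicity, so all four theories are simultaneously shown to be subdivision-invariant.
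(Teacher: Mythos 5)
Your proposal is correct, and it shares the paper's skeleton---the factorization $p_{X'}=p_X\circ s$ and the reduction to understanding $s_*s^*F$ versus $F$---but the decisive step is handled by a genuinely different argument. The paper stays $1$-categorical: it computes the underived pushforward, $s_*s^*F(y)=\varprojlim\{F(s(x)) \mid s(x)\geq y\}\cong\varprojlim\{F(x)\mid x\geq y\}\cong F(y)$, using only surjectivity of $s$ and the sheaf axiom, and then concludes by ``taking the associated right derived functors.'' Your extra work lives exactly in that last step: an isomorphism of underived pushforwards $(p_{X'})_*F'\cong (p_X)_*F$ does not by itself identify the derived functors unless one also knows that $s^*F$ is $s_*$-acyclic, i.e.\ that the unit $F\to Rs_*s^*F$ is a quasi-isomorphism---which is precisely what you prove, via homotopy-initiality of $s:s^{-1}(U_\sigma)\to U_\sigma$ and contractibility of the comma posets $\{\sigma'\mid \sigma\leq s(\sigma')\leq\tau\}$ (your identification of these posets and of the region $\mathrm{star}(\sigma)\cap|\bar{\tau}|$, a ball minus a closed subcomplex of its boundary sphere, is correct, though the nerve comparison is a homotopy equivalence rather than a homeomorphism, and you rightly flag the remaining geometric details as the technical heart). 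What each approach buys: the paper's route is short and elementary, but as written it glosses the acyclicity that your comma-poset lemma supplies; your route costs heavier machinery never developed in the paper (homotopy limits, the cofinality form of Quillen's Theorem A, a McCord-type nerve comparison), but yields a complete derived-level justification and disposes of all four theories at once through the single contractibility lemma---substantiating the paper's closing remark that the other three invariance statements follow ``along similar lines.''
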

\begin{proof}
Observe that since $p_{X'}=p_X\circ s$, then $(p_{X'})_*=(p_X)_*\circ s_*$. Now recall $$(p_{X'})_*F'=(p_{X'})_*s^*F=(p_{X})_*\circ s_* s^* F.$$ 
The question then boils down to understanding the relationship between $s_*s^*F$ and $F$. Unraveling the definition reveals
\begin{eqnarray*}
 s_*s^*F(y) & = &\varprojlim \{s^*F(x) | s(x)\geq y\} \\
&=&\varprojlim \{F(s(x)) | s(x)\geq y\} \\
(\mathrm{surjectivity})&=&\varprojlim \{F(x) | x\geq y\} \\
(\mathrm{sheaf-axiom})&=& F(U_y) \\
&=& F(y)
\end{eqnarray*}
So we have that for the subdivision map $s_*s^*F\cong F$ and as a consequence
$$(p_{X'})F'\cong p_X F.$$
Now we can just take the associated right derived functors to obtain the result.
\end{proof}

\begin{rmk}
Following a proof along similar lines, one can show that all of the four theories introduced are invariant under subdivision~\cite{curry2014sheaves}.
\end{rmk}

\section{Derived Equivalence of Sheaves and Cosheaves}
\label{sec:derived-equivalence}

In the Spring of 2012, Robert MacPherson conjectured that the category of cellular sheaves and the category of cellular cosheaves are derived equivalent.
A few weeks later the author provided a proof.
Subsequently, the author learned that a similar result was obtained by Peter Schneider in~\cite{schneider-vd} who used the term ``coefficient system'' instead of cellular cosheaf.
However, the proof of the equivalence (Theorem~\ref{thm:equivalence}) given here is fundamentally different than Schneider's because of its use of elementary injective sheaves and skyscraper cosheaves.
 
As noted in the introduction, this equivalence associates to a sheaf $F$ the cosheaf of compactly supported cochains valued in $F$:
\[
\equivp(F): \qquad U \qquad \squigrightarrow \qquad C^{\bullet}_c (U;F)
\]
This is the perspective given in Definition~\ref{defn:equiv-fun-cptsupp}.
Alternatively, this equivalence associates to a cellular sheaf $F$ a complex of elementary projective cosheaves.
This is the perspective given in Definition~\ref{defn:equiv-fun-projectives}.
By applying linear duality, one can view these formulas as giving exactly the formula for Verdier duality discovered by Shepard~\cite{shepard}, who never published his result.
Finally, in Section~\ref{subsec:classical-duality}, we conclude by proving some original duality statements that use the theory of sheaf homology presented here.

\subsection{Definition in Terms of Compactly Supported Cochains}

In this section we describe a formula that provides an explicit derived equivalence between the category of cellular sheaves and the category of cellular cosheaves. Recall that if $X$ is the face-relation poset of a cell complex, then for every cell $\sigma$ the open star $U_{\sigma}=\{\tau\,|\, \sigma\leq\tau\}$ is the set of cells that are cofaces of $\sigma$. Moreover, this association is order-reversing, i.e.
\[
\lambda \leq \sigma \qquad \rightsquigarrow \qquad U_{\sigma} \subseteq U_{\lambda}.
\]

\begin{defn}\label{defn:equiv-fun-cptsupp}
For a cellular sheaf $\sheaf{F}:X\to\Vect$ we define the associated \define{dual complex} of cellular cosheaves to be the following cosheaf of chain complexes
\[
\equivp(\sheaf{F})(\sigma) := C^{\bullet}_c(U_{\sigma};F)= F(\sigma) \to \bigoplus_{\sigma\leq_1\tau} F(\tau) \to \bigoplus_{\sigma\leq_2\gamma} F(\gamma) \to \cdots
\]
where the complex is shifted so that $F(\sigma)$ is placed in cohomological degree $\dim|\sigma|$ or, equivalently, homological degree $-\dim|\sigma|$. 
If $\lambda\leq\sigma$, then the associated map of complexes is the natural inclusion:
\[
r^{\bullet}_{\lambda,\sigma}: C^{\bullet}_c(U_{\sigma};F) \to C^{\bullet}_c(U_{\lambda};F)
\] 
\end{defn}

\begin{rmk}
To better understand the extension maps of $\equivp(F)$, consider the case where $\lambda$ is a codimension one face of $\sigma$:
\[
  \xymatrix{0 \ar[r] \ar[d]_{r^{i-1}_{\lambda,\tau}} & F(\sigma) \ar[r]^-{\delta_c^i} \ar[d]_{r^{i}_{\lambda,\tau}} & \bigoplus\limits_{\sigma\leq_1\tau} F(\tau) \ar[r]^-{\delta_c^{i+1}} \ar[d]_{r^{i+1}_{\lambda,\tau}} & \bigoplus\limits_{\sigma\leq_2\gamma} F(\gamma) \ar[d]_{r^{i+2}_{\lambda,\tau}} \\
  F(\lambda) \ar[r]_-{\delta_c^{i-1}} & \bigoplus\limits_{\lambda\leq_1\sigma} F(\sigma) \ar[r]_-{\delta_c^i} & \bigoplus\limits_{\lambda\leq_2\tau} F(\tau) \ar[r]_-{\delta_c^{i+1}} & \bigoplus\limits_{\lambda\leq_3\gamma} F(\gamma) }.
\]
\end{rmk}

We illustrate this definition in a simple example.
 
\begin{ex}[Closed Interval]
Suppose we start with a cellular sheaf $F$ on the unit interval $|X|=[0,1]$ decomposed into cells $x=0$, $y=1$, and $a=(0,1)$. Such a cellular sheaf is just a diagram of vector spaces of the following form:
\[
  \xymatrix{& F(a) & \\ F(x) \ar[ur]^{\rho_{a,x}} & & \ar[lu]_{\rho_{a,y}} F(y)}
\]
The associated dual can be visualized as follows:
\[
\xymatrix{F(a) & \ar[l]_{\id} F(a) \ar[r]^{\id} & F(a) \\ F(x) \ar[u]^{\rho_{a,x}} & \ar[l] 0 \ar[u] \ar[r] & \ar[u]_{\rho_{a,y}} F(y) }
\]
The horizontal arrows are now rightly viewed as the extension maps of a cellular cosheaf of chain complexes:
\[
  \xymatrix{& \ar[ld]_{r^{\bullet}_{x,a}} \equivp(F)(a) \ar[rd]^{r^{\bullet}_{y,a}} & \\ \equivp(F)(x) & & \equivp(F)(y) }
\]
\end{ex}

\begin{prop}[Local Homology, cf.~\cite{schneider-vd}]
\label{prop:local-homology}
Let $X$ be a cell complex.
If we define $\partial U_{\sigma}$ to be set of cells in the closure of $U_{\sigma}$, but not in $U_{\sigma}$ itself, then the complex $\equivp(F)(\sigma)$ is the formula for computing relative cohomology of the pair, i.e.
\[
\equivp(F)(\sigma) \simeq H^{\ast}(U_{\sigma},\partial U_{\sigma};F)\cong H^{\ast}(U_{\sigma}/\partial U_{\sigma};F)
\]
\end{prop}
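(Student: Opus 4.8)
The plan is to identify the complex $\equivp(F)(\sigma)=C^{\bullet}_c(U_\sigma;F)$ with the \emph{relative} cellular cochain complex of the pair formed by the closed star $\overline{U_\sigma}=U_\sigma\cup\partial U_\sigma$ and its frontier $\partial U_\sigma$, and then to invoke the standard good-pair isomorphism to pass to the quotient. Here I read $H^{\ast}(U_\sigma,\partial U_\sigma;F)$ as shorthand for this relative cohomology, the first entry being the closed star rather than the open star itself (the two sets of cells $U_\sigma$ and $\partial U_\sigma$ are disjoint by definition).

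First I would compare the underlying graded groups. By Definition~\ref{defn:cpt-cohom} the degree-$n$ term of $\equivp(F)(\sigma)$ is $\bigoplus_{\sigma\leq\tau,\ \dim\tau=n}F(\tau)$, the sum over the $n$-cells of $U_\sigma$. On the other hand, the relative cochain group $C^n(\overline{U_\sigma},\partial U_\sigma;F)$ is by definition the kernel of the restriction $C^n(\overline{U_\sigma};F)\to C^n(\partial U_\sigma;F)$, namely the cochains supported on the cells of $\overline{U_\sigma}\setminus\partial U_\sigma=U_\sigma$. Hence the two graded groups coincide in every degree, with $F(\sigma)$ sitting in degree $\dim|\sigma|$ exactly as prescribed by the shift in Definition~\ref{defn:equiv-fun-cptsupp}.

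The crux is matching the differentials, and the key structural fact is that $U_\sigma$ is open in the Alexandrov topology, i.e.\ an up-set: if $\tau\in U_\sigma$ and $\tau\leq_1\gamma$, then $\sigma\leq\tau\leq\gamma$ forces $\gamma\in U_\sigma$. Consequently the signed coboundary of a cochain supported on $U_\sigma$ stays supported on $U_\sigma$---no component leaks onto $\partial U_\sigma$---so the compactly supported differential $\delta_c$ of Definition~\ref{defn:cpt-cohom} agrees with the differential that the ambient cellular coboundary of $\overline{U_\sigma}$ induces on the relative subcomplex, signs included (all signs coming from the single signed incidence relation of Definition~\ref{defn:signed-reln}). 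Thus $\equivp(F)(\sigma)$ is literally the relative cochain complex, and since the cellular complex computes the corresponding sheaf cohomology we obtain $\equivp(F)(\sigma)\simeq H^{\ast}(\overline{U_\sigma},\partial U_\sigma;F)$. The same up-set observation is what makes the extension maps $r^{\bullet}_{\lambda,\sigma}$ of Definition~\ref{defn:equiv-fun-cptsupp} the evident inclusions $C^{\bullet}_c(U_\sigma;F)\hookrightarrow C^{\bullet}_c(U_\lambda;F)$ coming from $U_\sigma\subseteq U_\lambda$.

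Finally I would pass to the quotient: since $\partial U_\sigma$ is a subcomplex of $\overline{U_\sigma}$, the pair is a good pair and the usual excision argument yields $H^{\ast}(\overline{U_\sigma},\partial U_\sigma;F)\cong H^{\ast}(\overline{U_\sigma}/\partial U_\sigma;F)$, the coefficient data descending to the quotient because $F$ is collapsed along $\partial U_\sigma$. I expect the main obstacle to be bookkeeping rather than conceptual: one must verify that the sign conventions in $\delta_c$ match those of the topological coboundary, and---more delicately---justify the good-pair isomorphism with \emph{sheaf} coefficients rather than constant ones, ensuring the coefficient system is carried consistently through the collapse of $\partial U_\sigma$.
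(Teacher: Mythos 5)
Your identification of $\equivp(F)(\sigma)$ with the relative cochain complex is correct, and the up-set argument (if $\tau\in U_\sigma$ and $\tau\leq\gamma$ then $\sigma\leq\gamma$, so no coboundary term leaks onto $\partial U_\sigma$, and dually $\partial U_\sigma$ is down-closed, hence a genuine subcomplex) is exactly the right mechanism; since both differentials are built from the one signed incidence relation of Definition~\ref{defn:signed-reln}, the complexes agree on the nose and there is in fact no sign bookkeeping left to do. But your route is genuinely different from the paper's. The paper never forms the pair $\bigl(\overline{U_\sigma},\partial U_\sigma\bigr)$ inside $X$: its one-line proof is ``check directly,'' plus the observation that the open star $U_\sigma$ with its induced cells need not be a cell complex in the sense of Definition~\ref{defn:cell-cmplx} (for the interval with cells $x,a,y$, the star $U_a=(0,1)$ with a single cell is the paper's own non-example), so one subdivides $U_\sigma$ into an honest cell complex and invokes subdivision invariance (Theorem~\ref{thm:subdivision}) to see the answer is unchanged; after subdivision, $C^{\bullet}_c(U_\sigma;F)$ is the compactly supported complex of a bona fide cell complex and $\overline{U_\sigma}/\partial U_\sigma$ is literally its one-point compactification. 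Your approach buys a cleaner, purely combinatorial proof of the first isomorphism with no subdivision machinery; the paper's subdivision buys the legitimacy of the topological reading, and in particular of the second isomorphism---which is the one weak spot in your write-up. Appealing to ``good pair plus excision'' is not available here: the quotient $\overline{U_\sigma}/\partial U_\sigma$ is typically not a regular cell complex (collapse $\partial U_x$ for a vertex $x$ of a circle with two vertices and two edges, and the attaching maps of the surviving edges are no longer embeddings), and the paper develops no excision or good-pair theorem for sheaf coefficients, so that step cannot be outsourced to the usual argument for constant coefficients. The honest cellular fix, which you nearly state, is definitional: read $H^{\ast}(U_\sigma/\partial U_\sigma;F)$ as the cohomology of the complex with $F$ extended by zero over the collapsed vertex---the same device as Definition~\ref{defn:BM_cosheaf_homology}---whereupon your kernel identification already \emph{is} the quotient statement and nothing further needs proving. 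You correctly flagged this as the delicate point; with that reading made explicit, your proof is complete and, for the chain-level identification, more self-contained than the paper's.
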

\begin{proof}
One can check directly and note that when necessary, e.g.~if $\sigma$ is not a vertex, the open star can be subdivided further to get a cell complex without affecting the answer, cf.~Theorem \ref{thm:subdivision}.
\end{proof}

\begin{ex}[Manifold]
Let  $\Bbbk_M$ be the constant sheaf on a cell complex $M$ that happens to be a manifold of dimension $n$ without boundary. Since $U_{\sigma}$ is homeomorphic to $\RR^n$, $\equivp(\Bbbk_M)(\sigma)$ is the complex that computes compactly supported cohomology of $\RR^n$. 
Alternatively, in view of Proposition~\ref{prop:local-homology}, the complex computes cohomology of the sphere, which when shifted to homological indexing gives the following quasi-isomorphism:
\[
\equivp(\Bbbk_M)(\sigma)\simeq \Bbbk[-n]
\]
\end{ex}

\begin{rmk}
The complex of cosheaves $\equivp(\Bbbk_X)$ can be used to identify singularities in a space. 
For example, one could study the complex when $X$ is a graph with nodes of valence three or higher.
\end{rmk}

\subsection{Definition in terms of Elementary Objects}

Now we recast this definition so that it is a functor that takes in a cellular sheaf and returns a complex of elementary projective cosheaves.
This will streamline the proof that this functor is indeed a derived equivalence.

\begin{defn}\label{defn:equiv-fun-projectives}
  Let $D^b(\Shv(X))$ and $D^b(\Coshv(X))$ denote the derived categories of cellular sheaves and cosheaves respectively. The functor 
  $$\equivp:D^b(\Shv(X))\to D^b(\Coshv(X))$$ assigns to a sheaf $F\in \Shv(X)$ the following complex of projective cosheaves
  \[
    \xymatrix{\cdots \ar[r] & \bigoplus\limits_{\sigma\in X^i} [\widehat{\sigma}]^{F(\sigma)} \ar[r] & \bigoplus\limits_{\gamma\in X^{i+1}}[\widehat{\gamma}]^{F(\gamma)} \ar[r] & \bigoplus\limits_{\tau\in X^{i+2}}[\widehat{\tau}]^{F(\tau)} \ar[r] & \cdots}
  \]
  where the cohomological degree (or negative homological degree) of each term corresponds to dimension of the cell.
  The maps in between are to be understood as the matrix $\oplus [\sigma:\gamma]\rho^F_{\sigma,\gamma}$.
For a complex of sheaves, we simply form a double complex by first applying the above formula to each sheaf and then pass to the totalization.
\end{defn}

\begin{prop}
Definitions~\ref{defn:equiv-fun-cptsupp} and~\ref{defn:equiv-fun-projectives} are equivalent.
\end{prop}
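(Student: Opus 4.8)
The plan is to show that both definitions produce the same complex of cellular cosheaves, by comparing the three pieces of data that determine such a complex: its value on each cell, its internal differential, and its cosheaf extension maps. Since $\Coshv(X)\cong[X^{\op},\Vect]$ by Corollary~\ref{cor:equivalence}, a complex of cosheaves is precisely an assignment of a cochain complex to each cell together with compatible extension maps, so it suffices to match these two collections of data. The strategy is to evaluate the complex of elementary projective cosheaves of Definition~\ref{defn:equiv-fun-projectives} at an arbitrary cell and recognize the result as the compactly supported cochain complex of Definition~\ref{defn:equiv-fun-cptsupp}.

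First I would evaluate the degree-$n$ term $\bigoplus_{\sigma\in X^n}[\widehat{\sigma}]^{F(\sigma)}$ at a cell $\tau$. Using the support formula $[\widehat{\sigma}]^{W}(\tau)=W$ when $\tau\leq\sigma$ and $0$ otherwise, this evaluation gives $\bigoplus_{\sigma\geq\tau,\ \dim\sigma=n}F(\sigma)$. On the other hand, the open star $U_{\tau}$ has as its $n$-cells exactly the cofaces of $\tau$ of dimension $n$, so by Definition~\ref{defn:cpt-cohom} the degree-$n$ term of $C^{\bullet}_c(U_{\tau};F)$ is the identical direct sum. This is compatible with the grading convention common to both definitions, under which $F(\tau)$ lands in cohomological degree $\dim|\tau|$, since the summand $\sigma=\tau$ is the lowest-dimensional coface.

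Next I would check that the differentials coincide: the map in Definition~\ref{defn:equiv-fun-projectives} is the matrix $\oplus[\sigma:\gamma]\rho^F_{\sigma,\gamma}$, and restricting its source and target to cofaces of $\tau$ recovers exactly the compactly supported differential $\delta^n_c$ of Definition~\ref{defn:cpt-cohom} for the open star $U_{\tau}$, since both are built from the same internal sheaf maps $F(\sigma)\to F(\gamma)$ weighted by the same signed incidences $[\sigma:\gamma]$. For the extension maps, given $\lambda\leq\tau$, each elementary projective $[\widehat{\sigma}]^{F(\sigma)}$ with $\sigma\geq\tau$ has the identity as its extension map from $\tau$ to $\lambda$, because $\lambda\leq\tau\leq\sigma$ places both cells in its support; the summands with $\sigma\not\geq\tau$ contribute $0$ to the source. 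Thus the induced map $\bigoplus_{\sigma\geq\tau}F(\sigma)\to\bigoplus_{\sigma\geq\lambda}F(\sigma)$ is exactly the inclusion of a subsum, which is the natural inclusion $r^{\bullet}_{\lambda,\tau}$ of Definition~\ref{defn:equiv-fun-cptsupp}. Finally, since both definitions extend to complexes of sheaves by forming the same totalization of the associated double complex, the identification is preserved in the derived setting.

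The sign bookkeeping and the passage to totalizations are routine; the one point demanding care is the identification of $C^{\bullet}_c(U_{\tau};F)$ with the evaluation at $\tau$. Here I would confirm that the cofaces of $\tau$ are precisely the cells of the open star $U_{\tau}$ and that the signed incidence relations induced on $U_{\tau}$ agree with the ambient ones, so that equality holds already at the level of the cochain complex and no subdivision (cf.\ Proposition~\ref{prop:local-homology}) is required.
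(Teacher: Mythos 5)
Your proposal is correct and follows the same route as the paper's (one-line) proof: evaluate the complex of Definition~\ref{defn:equiv-fun-projectives} at a cell and use the support formula for elementary projectives, $[\widehat{\sigma}]^{W}(\tau)=W$ iff $\tau\leq\sigma$, to recover $C^{\bullet}_c(U_{\tau};F)$. Your additional verifications of the differentials and extension maps simply spell out what the paper leaves implicit.
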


\begin{proof}
By evaluating the formula given in Definition~\ref{defn:equiv-fun-projectives} on a cell and using the fact that each elementary projective is supported on the closure of the cell, one obtains the formula given in Definition~\ref{defn:equiv-fun-cptsupp}.
\end{proof}

\subsection{The Equivalence}

\begin{thm}[Equivalence]\label{thm:equivalence}
  $\equivp:D^b(\Shv(X))\to D^b(\Coshv(X))$ is an equivalence.
\end{thm}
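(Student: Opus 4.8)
The plan is to exploit the fact that both triangulated categories are controlled by the elementary objects of Section~\ref{subsec:elem-inj-proj}. Since $X$ is a finite poset, $\Shv(X)$ and $\Coshv(X)$ have enough injectives and projectives, so by \cite[Thm 6.7]{aluffi} the category $D^b(\Shv(X))$ is the bounded homotopy category of complexes of injective sheaves and $D^b(\Coshv(X))$ that of projective cosheaves; by Lemma~\ref{lem:inj} these are generated, as triangulated categories, by the elementary injective sheaves $[\sigma]$ and the elementary projective cosheaves $[\widehat{\sigma}]$, respectively. The functor $\equivp$ of Definition~\ref{defn:equiv-fun-projectives} is additive, and since evaluation at a cell and the coefficient assignment $W\mapsto[\widehat{\sigma}]^W$ are exact, a short exact sequence of sheaves is carried degreewise to a short exact sequence of complexes of cosheaves; hence $\equivp$ is triangulated and descends to $D^b$. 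It therefore suffices to understand $\equivp$ on the generating subcategory spanned by the $[\sigma]$ and to verify full faithfulness there.

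First I would compute $\equivp$ on an elementary injective sheaf, and the claim is that $\equivp([\sigma]^W)$ is quasi-isomorphic to the skyscraper cosheaf $\skycshv_\sigma^W$ placed in cohomological degree $\dim\sigma$. To see this, evaluate the complex at a cell $\lambda$: by Definition~\ref{defn:equiv-fun-cptsupp} one obtains $C^{\bullet}_c(U_\lambda;[\sigma]^W)$, the compactly supported cochains on the open star of $\lambda$ with coefficients in the sheaf supported on $\bar\sigma$, and by Proposition~\ref{prop:local-homology} this computes the relative cohomology $H^\ast(U_\lambda,\partial U_\lambda;[\sigma]^W)$. Because $[\sigma]^W$ is the constant sheaf $W$ on the closed ball $\bar\sigma$ extended by zero, this relative cohomology vanishes whenever $\lambda\neq\sigma$ and equals $W$, concentrated in degree $\dim\sigma$, when $\lambda=\sigma$. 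The regular-cell structure of Definition~\ref{defn:reg-cell} is exactly what licenses these local computations: every closed interval $[\lambda,\sigma]$ is conical, and every open interval $(\lambda,\sigma)$ has the cohomology of a sphere $S^{\dim\sigma-\dim\lambda-2}$, with Lemma~\ref{lem:2cells} supplying the crucial codimension-two case.

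With the skyscraper computation in hand, essential surjectivity is nearly immediate: every cosheaf on a finite poset admits a finite composition series with skyscraper factors, so the shifted skyscrapers $\equivp([\sigma])$ generate $D^b(\Coshv(X))$. For full faithfulness it suffices, by dévissage, to compare graded Hom-spaces between generators. On the sheaf side, injectivity of $[\sigma]$ and $[\tau]$ together with the adjunction of Section~\ref{subsec:pushforward} gives $\Hom_{D^b(\Shv(X))}([\sigma],[\tau][n])\cong\Hom_{\Shv(X)}([\sigma],[\tau])$, which is $\Bbbk$ when $\tau\le\sigma$ and $n=0$ and zero otherwise. On the cosheaf side, the skyscraper description turns this into $\Ext^{\,n+\dim\sigma-\dim\tau}_{\Coshv(X)}(\skycshv_\sigma,\skycshv_\tau)$, and the standard incidence-algebra computation identifies $\Ext^{k}_{\Coshv(X)}(\skycshv_\sigma,\skycshv_\tau)$ with the reduced cohomology $\tilde H^{k-2}$ of the open interval $(\tau,\sigma)$. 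The sphericity from the previous paragraph makes this $\Bbbk$ precisely when $\tau\le\sigma$ and $k=\dim\sigma-\dim\tau$, i.e.\ exactly when $n=0$, so $\equivp$ induces isomorphisms on all graded Hom-spaces between generators. A fully faithful triangulated functor whose image generates the (finite-length) target is an equivalence, which finishes the argument.

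The hard part will be the bookkeeping that matches the two $\Ext$ computations: one must check that the cohomological shift built into Definition~\ref{defn:equiv-fun-cptsupp}, placing $F(\sigma)$ in degree $\dim\sigma$, together with the signs $[\sigma:\tau]$ in the differential, conspires so that the sphere dimension $\dim\sigma-\dim\tau-2$ lands the nonzero $\Ext$ in exactly the degree that cancels the shift. This is precisely the Verdier-duality content of the theorem, and it is where the regular-cell hypotheses are indispensable. An alternative that sidesteps the explicit $\Ext$ computation is to construct the dual functor $\equivq\colon D^b(\Coshv(X))\to D^b(\Shv(X))$ by the mirror-image formula, sending a cosheaf to a complex of elementary injective \emph{sheaves}, and to verify directly through the skyscraper intermediary that $\equivq\circ\equivp([\sigma])\simeq[\sigma]$ and $\equivp\circ\equivq([\widehat{\sigma}])\simeq[\widehat{\sigma}]$; naturality of the resulting unit and counit then upgrades this to an adjoint equivalence. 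I would ultimately present whichever route keeps the sign bookkeeping most transparent.
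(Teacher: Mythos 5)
Your proposal is correct in outline, and your \emph{main} route is genuinely different from the paper's; amusingly, the route you relegate to your final paragraph as an ``alternative'' is precisely the paper's actual proof. The paper never computes Hom- or Ext-spaces between generators. Instead, after checking that $\equivp$ is additive, preserves homotopies, and preserves quasi-isomorphisms (the latter via the acyclic assembly lemma applied to the double complex with exact rows), it makes exactly your key local computation---$\equivp([\sigma]^V)$ is the projective cosheaf resolution of the shifted skyscraper $\skycshv_{\sigma}^V[-\dim\sigma]$---and then constructs the mirror functor $\equivpc$ (your $\equivq$) sending cosheaves to complexes of elementary injective sheaves, applies $\equivpc$ to the quasi-isomorphism $q:\equivp([\sigma]^V)\to\skycshv_{\sigma}^V[-\dim\sigma]$ to obtain $\equivpc\equivp([\sigma]^V)\to[\sigma]^V$, extends this natural transformation to all injectives using the decomposition of Lemma~\ref{lem:inj}, then to complexes of injectives, and concludes $\equivpc\equivp\cong\id$ and symmetrically $\equivp\equivpc\cong\id$, yielding an adjoint equivalence. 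What the paper's route buys is that it sidesteps the incidence-algebra $\Ext$ computation and the sign bookkeeping you rightly identify as the hard part; what your main route buys is that it never needs an explicit quasi-inverse, reducing everything to generation plus graded Hom-spaces of generators. Note that both routes lean on the same local input---exactness of the interval complexes for $\lambda<\sigma$, i.e.\ the sphericity of open intervals in a regular cell complex---which the paper also asserts without detailed proof, so you are on equal footing there.

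One substantive caution about your main route: matching \emph{dimensions} of graded Hom-spaces between generators does not by itself give full faithfulness; you must check that the map induced by $\equivp$ itself is an isomorphism, i.e.\ that for $\tau\leq\sigma$ the generating morphism $\eta:[\sigma]\to[\tau]$ is sent to a chain map that is \emph{not} nullhomotopic. This is fillable: $\equivp(\eta)$ is the identity on the common summands $[\widehat{\mu}]$, $\mu\leq\tau$, and computing $\Ext^{\dim\sigma-\dim\tau}_{\Coshv(X)}(\skycshv_{\sigma},\skycshv_{\tau})$ from the projective resolution $\equivp([\sigma])$ one finds that the relevant cochain group $\Hom([\widehat{\tau}],\skycshv_{\tau})\cong\Bbbk$ is one-dimensional with no coboundaries, and the class of $\equivp(\eta)$ is the nonzero cochain; but this must be said, since your phrase ``so $\equivp$ induces isomorphisms on all graded Hom-spaces'' currently jumps over it. A second, minor point: your essential-surjectivity argument via finite composition series implicitly assumes finite-dimensional stalks; for arbitrary $\Vect$-valued cosheaves replace it with the finite filtration by support (finite because $X$ is finite), whose successive quotients are skyscrapers $\skycshv_{\sigma}^{W}$ with possibly infinite-dimensional $W$, which are still hit by $\equivp([\sigma]^{W})$ up to shift.
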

\begin{proof}
First let us point out that the functor $\equivp$ really is a functor. Indeed if $\alpha:F\to G$ is a map of sheaves then we have maps $\alpha(\sigma):F(\sigma)\to G(\sigma)$ that commute with the respective restriction maps $\rho^F$ and $\rho^G$. As a result, we get maps $[\widehat{\sigma}]^{F(\sigma)}\to [\widehat{\sigma}]^{G(\sigma)}$. Moreover, these maps respect the differentials in $\equivp(F)$ and $\equivp(G)$, so we get a chain map. It is clearly additive, i.e. for maps $\alpha,\beta:F\to G$ $\equivp(\alpha+\beta)=P(\alpha)+P(\beta)$. This implies that $\equivp$ preserves homotopies.

It is also clear that $\equivp$ preserves quasi-isomorphisms. Note that a sequence of cellular sheaves $A^{\bullet}$ is exact if and only if $A^{\bullet}(\sigma)$ is an exact sequence of vector spaces for every $\sigma\in X$. This implies that $\equivp(A^{\bullet})$ is a double-complex with exact rows. By the acyclic assembly lemma~\cite[Lem.~2.7.3]{weibel} we get that the totalization is exact.

Since the derived category is built out of elementary injective sheaves, let us understand what this functor does to an elementary the injective sheaf $[\sigma]^V$. 
Applying formula we get
\[
\xymatrix{
 \equivp: [\sigma]^V &\rightsquigarrow& \cdots \ar[r] & \bigoplus\limits_{\tau^i\subset\sigma}[\widehat{\tau}]^V \ar[r] &\cdots \ar[r] & [\widehat{\sigma}]^V,
}
\]
which is nothing other than the projective cosheaf resolution of the skyscraper (or stalk) cosheaf $\widehat{S}_{\sigma}^V$ supported on $\sigma$, i.e.
\[
  \widehat{S}_\sigma^V(\tau)=\left\{\begin{array}{ll} V &\sigma=\tau\\ 0 & \mathrm{o.w.}\end{array}\right.
\]
Consequently, there is a quasi-isomorphism $q:\equivp([\sigma]^V)\to \widehat{S}_{\sigma}^V[-\dim\sigma]$ where $\widehat{S}_{\sigma}^V$ is placed in degree equal to the dimension of $\sigma$ assuming that $[\sigma]^V$ is initially in degree 0. 
By using an analogous definition and letting $\equivpc$ send cosheaves to sheaves, we see that
\[
 \equivpc(q):\equivpc\equivp([\sigma]^V)\to \equivpc(S_{\sigma}^V)=[\sigma]^V
\]
and thus we can define partially a natural transformation from $\equivpc\equivp$ to $\id_{D^b(\Shv)}$ when restricted to elementary injectives. 
However, by Lemma~\ref{lem:inj} we know that every injective looks like a sum of elementary injectives, so this natural transformation is well-defined for injective sheaves concentrated in a single degree. 
Moreover, $\equivp$ sends a complex of injectives, before taking the totalization of the double complex, to the projective resolutions of a complex of skyscraper cosheaves. 
Applying $\equivp$ to the quasi-isomorphism relating the double complex of projective cosheaves to the complex of skyscrapers, extends the natural transformation to the whole derived category. However, since $\equivp$ preserves quasi-isomorphisms, this natural transformation is in fact an equivalence. 
This shows $\equivpc\equivp\cong\id_{D^b(\Shv)}$. 
Repeating the argument starting from cosheaves shows that
\[
 \equivp:D^b(\Shv(X))\leftrightarrow D^b(\Coshv(X)):\equivpc
\]
is an adjoint equivalence of categories.
\end{proof}

\subsection{Verdier Duality}

Theorem~\ref{thm:equivalence} should be taken as the primary duality result from which other dualities spring.
Verdier duality for cellular sheaves simply takes the equivalence described above and applies linear duality, as we now describe.

\begin{defn}[Dualizing Complex]
Let $X$ be a cell complex, the \define{dualizing complex}
$\omega_X^{\bullet}$ is a complex of elementary injective sheaves that in negative cohomological degree $i$ is sum over the elementary injectives concentrated on $i$-cells, i.e.
\[
\omega^{-i}_X:=\bigoplus\limits_{\gamma\in X^i}[\gamma].
\]
The maps between these terms use the orientations on cells to guarantee it is a complex, i.e.
\[
\xymatrix{\cdots \ar[r] & \bigoplus\limits_{\tau\in X^{i+1}}[\tau] \ar[r]^-{\oplus[\gamma:\tau]} & \bigoplus\limits_{\gamma\in X^i} [\gamma] \ar[r]^-{\oplus[\sigma:\gamma]} & \bigoplus\limits_{\sigma\in X^{i-1}} [\sigma] \ar[r] &\cdots }
\]
\end{defn}

\begin{defn}[Sheaf Hom]
Given two cellular sheaves $F$ and $G$, we get a third cellular sheaf $\mathcal{H}om(F,G)$ called \define{sheaf hom} whose value on a cell $\sigma$ is given by the space of natural transformations between the restrictions of $F$ and $G$ to the star of $\sigma$, i.e.
$$\mathcal{H}om(F,G)(\sigma):=\Hom(F|_{U_{\sigma}},G|_{U_{\sigma}}).$$
\end{defn}

Let $\Shv_f(X)$ denote the category of cellular sheaves valued in \emph{finite} dimensional vector spaces $\vect$.
For $\vect$ there is an obvious anti-autoequivalence that takes
a vector space $V$ to its linear dual $V^*$
and a map of vector spaces $\rho:V\to W$ to its adjoint $\rho^*:W^*\to V^*$.

\begin{defn}[Verdier Dual]\label{defn:verdier-dual}
We define the \define{Verdier dual functor} as follows:
\[
D:D(\Shv_f(X))\to D(\Shv_f(X))^{\op} \qquad F \squigrightarrow D^{\bullet}F:=\mathcal{H}om(F,\omega^{\bullet}_X)
\]
Written out explicitly, this associates to $F$ the following complex of injective sheaves:
\[
\xymatrix{\cdots \ar[r] & \bigoplus\limits_{\tau\in X^{i+1}} [\tau]^{F(\tau)^*} \ar[r]_-{\oplus[\gamma:\tau]\rho^*} & \bigoplus\limits_{\gamma\in X^i} [\gamma]^{F(\gamma)^*} \ar[r]_-{\oplus[\sigma:\gamma]\rho^*} & \bigoplus\limits_{\sigma\in X^{i-1}}[\sigma]^{F(\sigma)^*} \ar[r] &\cdots }
\]
\end{defn}

Linear duality thus takes a cellular sheaf $F$ to a cellular cosheaf $\cosheaf{F}$ and vice versa since the restriction maps get dualized into extension maps. 
Moreover, this functor is contravariant since a sheaf morphism $F\to G$ gets sent to a cosheaf morphism in the opposite direction $\cosheaf{F}\leftarrow \cosheaf{G}$ as one can easily check. 
We can promote this functor to the derived category by exchanging cohomological indexing with homological indexing with appropriate shifts.
Let $V:D^b(\Coshv_f(X))\to D^b(\Shv_f(X))^{\op}$ denote this operation.
The following proposition is immediate and makes clear that by thinking of Verdier duality as an exchange of sheaves and cosheaves, then one can avoid the linear duals in Definition~\ref{defn:verdier-dual}.

\begin{prop}
  The functor $\equivp:D^b(\Shv_f(X))\to D^b(\Coshv_f(X))$ composed with linear duality $V:D^b(\Coshv_f(X))\to D^b(\Shv_f(X))^{\op}$ gives the Verdier dual anti-equivalence, i.e. $D\cong V\equivp$.
\end{prop}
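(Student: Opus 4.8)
The plan is to establish the natural isomorphism $D \cong V\equivp$ by unwinding both functors term-by-term on a single cellular sheaf $F$ and checking that the underlying graded objects, the differentials, and the signs all coincide; naturality in $F$ and the passage to arbitrary complexes will then be formal. The first step is to record the effect of linear duality on elementary projectives. Because $V$ dualizes every stalk $W \rightsquigarrow W^{*}$ and turns each extension map into its adjoint restriction map, it carries the elementary projective cosheaf $[\widehat{\sigma}]^{W}$ (which assigns $W$ to every $\tau \leq \sigma$ and $0$ elsewhere) to the elementary injective sheaf $[\sigma]^{W^{*}}$. Hence $V$ sends the $i$-cell summand $[\widehat{\sigma}]^{F(\sigma)}$ of $\equivp(F)$ to $[\sigma]^{F(\sigma)^{*}}$.

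Next I would match degrees and differentials. The functor $\equivp$ places a cell of dimension $i$ in cohomological degree $i$, while $V$ exchanges homological and cohomological indexing and so carries degree $i$ to degree $-i$; on the other side $\omega_{X}^{-i} = \bigoplus_{\gamma \in X^{i}}[\gamma]$, so $D^{\bullet}F = \mathcal{H}om(F,\omega_{X}^{\bullet})$ places $[\gamma]^{F(\gamma)^{*}}$ with $\gamma \in X^{i}$ in cohomological degree $-i$. Thus both $V\equivp(F)$ and $D^{\bullet}F$ have $\bigoplus_{\gamma \in X^{i}}[\gamma]^{F(\gamma)^{*}}$ in cohomological degree $-i$. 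For the differentials, recall that in $\equivp(F)$ the map from the $i$-cell term to the $(i+1)$-cell term is $\oplus\,[\sigma:\gamma]\,\rho^{F}_{\sigma,\gamma}$, assembled from the sheaf structure maps $F(\sigma)\to F(\gamma)$ for $\sigma \leq_1 \gamma$. Applying the contravariant functor $V$ reverses this arrow and replaces each entry by its adjoint $[\sigma:\gamma]\,(\rho^{F}_{\sigma,\gamma})^{*}\colon F(\gamma)^{*}\to F(\sigma)^{*}$, which is precisely the differential recorded in Definition~\ref{defn:verdier-dual} for $D^{\bullet}F$ (the sign $[\sigma:\gamma]$ pairs the lower cell with the higher one in both formulas). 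Since summands, degrees, and differential matrices agree, one obtains an isomorphism of complexes $V\equivp(F) \cong D^{\bullet}F$.

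It remains to upgrade this to a natural isomorphism of functors on the whole derived category. A sheaf morphism $\alpha\colon F\to G$ induces the adjoint maps $\alpha(\sigma)^{*}\colon G(\sigma)^{*}\to F(\sigma)^{*}$, which are compatible with the isomorphisms above, so the identification is natural in $F$; moreover both $D$ and $V\equivp$ reverse the direction of $\alpha$, so the two contravariant functors are consistently compared as maps $D^{b}(\Shv_f(X))\to D^{b}(\Shv_f(X))^{\op}$. For a complex of sheaves, both $\equivp$ and $D$ are defined by forming a double complex and totalizing in the same way, so the term-wise isomorphism assembles into an isomorphism of totalizations. The only real obstacle I anticipate is bookkeeping: one must fix the sign conventions $[\sigma:\gamma]$ and, above all, normalize the shift built into $V$ (the homological-to-cohomological reindexing) so that no spurious sign or off-by-one shift appears. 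Once the conventions of Definitions~\ref{defn:equiv-fun-projectives} and~\ref{defn:verdier-dual} are pinned down, everything lines up on the nose, and we conclude $D \cong V\equivp$.
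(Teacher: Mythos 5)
Your proof is correct and follows exactly the route the paper has in mind: the paper declares the proposition ``immediate'' from the explicit formulas of Definitions~\ref{defn:equiv-fun-projectives} and~\ref{defn:verdier-dual}, and your term-by-term check---$V$ carrying $[\widehat{\sigma}]^{W}$ to $[\sigma]^{W^{*}}$, the degree reindexing $i\mapsto -i$ matching $\omega_X^{-i}=\bigoplus_{\gamma\in X^i}[\gamma]$, and the dualized matrix $\oplus\,[\sigma:\gamma]\rho^{*}$---is precisely that comparison spelled out. The naturality and totalization remarks supply the routine details the paper leaves implicit, so nothing is missing.
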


\subsection{Homological Duality Statements}
\label{subsec:classical-duality}

In this section, we prove some homological duality results that appear classical at first glance. 

\begin{thm}
  If $F$ is a cell sheaf on a cell complex $X$, then
  \[
    H^i_c(X;F)\cong H_{-i}(X;\equivp(F)).
  \]
\end{thm}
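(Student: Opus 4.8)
The statement relates compactly supported sheaf cohomology $H^i_c(X;F)$ to the cosheaf homology $H_{-i}(X;\equivp(F))$ of the dual complex. The plan is to unwind both sides into chain complexes and exhibit an explicit identification, leveraging the two results already in hand: Definition~\ref{defn:equiv-fun-cptsupp}/\ref{defn:equiv-fun-projectives} describing $\equivp(F)$ as a complex of elementary projective cosheaves, and the computation $H_i(X;\widehat{F})=H_i(p_*\widehat{P}_\bullet)$ together with the fact (from Section~\ref{subsec:cosheaf-homology-colimit}) that cosheaf homology is computed by the colimit, i.e.\ by $p_*$ applied to a projective resolution.

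\textbf{Main line of attack.}
First I would observe that $\equivp(F)$ is \emph{already} a complex of projective cosheaves, so no further projective resolution is needed: the cosheaf homology $H_\bullet(X;\equivp(F))$ is computed by applying the global sections functor $p_*=\colim$ degreewise to the complex $\equivp(F)$ and then taking homology of the resulting total complex. Next I would compute $p_*$ of each elementary projective summand. Since $p_*[\widehat{\sigma}]^{F(\sigma)}=\colim [\widehat{\sigma}]^{F(\sigma)}\cong F(\sigma)$ (the colimit of a cosheaf supported on the closure $\bar\sigma$ with constant value $F(\sigma)$ is just $F(\sigma)$, because $\bar\sigma$ has a terminal-style cofinal structure for the colimit), applying $p_*$ to the complex in Definition~\ref{defn:equiv-fun-projectives} yields exactly
\[
\cdots \to \bigoplus_{\sigma\in X^i} F(\sigma) \to \bigoplus_{\gamma\in X^{i+1}} F(\gamma) \to \cdots
\]
with the differential $\oplus[\sigma:\gamma]\rho^F_{\sigma,\gamma}$. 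This is precisely the compactly supported cochain complex $C^\bullet_c(X;F)$ of Definition~\ref{defn:cpt-cohom}.

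\textbf{Matching the gradings.}
The only remaining bookkeeping is the index conventions. In $\equivp(F)$ the cell $\sigma$ of dimension $i$ sits in cohomological degree $i$, equivalently homological degree $-i$; so taking homology in homological degree $-i$ of $p_*\equivp(F)$ is the same as taking cohomology in cohomological degree $i$ of $C^\bullet_c(X;F)$. This gives $H_{-i}(X;\equivp(F))\cong H^i_c(X;F)$ as desired. I would phrase the conclusion by noting that cosheaf homology agrees with the colimit-derived functor (the theorem of Section~\ref{subsec:cosheaf-homology-colimit}), so that $H_{-i}(X;\equivp(F))$ is literally the homology of the complex obtained by applying $\colim$, which we have identified with $C^\bullet_c(X;F)$.

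\textbf{The main obstacle.}
The one point requiring genuine care is the claim that $\colim[\widehat{\sigma}]^{F(\sigma)}\cong F(\sigma)$, and more broadly that applying $p_*$ to $\equivp(F)$ commutes correctly with forming homology — i.e.\ that because each term is projective, the naive degreewise colimit already computes the derived colimit, with no higher derived contributions cluttering the total complex. This is exactly where the projectivity of the elementary cosheaves $[\widehat{\sigma}]^{F(\sigma)}$ (established in Section~\ref{subsec:elem-inj-proj}) and the acyclicity results underlying the cosheaf-homology theorem do the work; I would cite the colimit computation of Section~\ref{subsec:cosheaf-homology-colimit} to ensure the complex $\equivp(F)$ of projectives may be used directly in place of an abstract projective resolution.
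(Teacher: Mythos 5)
Your argument is correct and is essentially the paper's own proof: apply $\equivp$ to get a complex of elementary projective cosheaves, observe that projectivity lets the degreewise pushforward $p_*=\colim$ compute the derived colimit, identify $\colim[\widehat{\sigma}]^{F(\sigma)}\cong F(\sigma)$, and recognize the resulting complex as $C^{\bullet}_c(X;F)$ with the cohomological degree $i$ matching homological degree $-i$. One cosmetic point: in the indexing category for the colimit the cell $\sigma$ is \emph{initial} rather than terminal, so the cleaner justification is that $[\widehat{\sigma}]^{F(\sigma)}$ is a constant diagram with identity extension maps over the connected poset $\bar{\sigma}$, whence its colimit is $F(\sigma)$.
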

\begin{proof}
First we apply the equivalence functor $\equivp$ to $F$
\[
  \xymatrix{0 \ar[r] & \bigoplus\limits_{v\in X}[\widehat{v}]^{F(v)} \ar[r] & \bigoplus\limits_{e\in X}[\widehat{e}]^{F(e)} \ar[r] & \bigoplus\limits_{\sigma\in X}[\widehat{\sigma}]^{F(\sigma)} \ar[r] & \cdots}
\]
Taking colimits (pushing forward to a point) term by term produces the complex of vector spaces
\[
  \xymatrix{0 \ar[r] & \bigoplus\limits_{v\in X}F(v) \ar[r] & \bigoplus\limits_{e\in X}F(e) \ar[r] & \bigoplus\limits_{\sigma\in X} F(\sigma) \ar[r] & \cdots}
\]
which the reader should recognize as being the computational formula for computing compactly supported sheaf cohomology.
\end{proof}

Now let us give a simple proof of the standard Poincar\'e duality statement on a manifold $X$ with coefficients in an arbitrary cell sheaf $F$, except this time the sheaf homology groups are used. 

\begin{thm}\label{thm:mfld_duality}\index{duality!over a manifold}
  Suppose $F$ is a cell sheaf on a cell complex $X$ that happens to be a compact manifold (so it has a dual cell structure $\widehat{X}$), then
  \[
    H^i(X;F)\cong H_{n-i}(X;F).
  \]
  Where the group on the right is not just notational, but it indicates the left-derived functors of $p_{\dd}$ on sheaves.
\end{thm}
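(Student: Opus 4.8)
The plan is to show that both sides are computed by \emph{the same} complex of vector spaces once we pass to the dual cell structure. The delicate point is that sheaf homology $H_{n-i}(X;F)=H_{n-i}(p_{\dd}P^{\bullet})$ is defined through a projective resolution and $p_{\dd}$, and is not directly presented by a hands-on chain complex; the only place a concrete complex is available is the cosheaf-homology formula of Section~\ref{subsec:cosheaf-homology-colimit}. So I would route the whole argument through the dual triangulation.

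First I would invoke Homological Poincar\'e Duality (Proposition~\ref{prop:homological-poincare-duality}) to identify the sheaf-homology group $H_{n-i}(X;F)$ with the cosheaf-homology group $H_{n-i}(\widehat{X};\cosheaf{F})$, where $\cosheaf{F}$ is the cellular cosheaf on the dual structure determined by $F$ as in Proposition~\ref{thm:mfld_sheaf_cosheaf}. Recall that this correspondence sets $\cosheaf{F}(\widehat{\sigma})=F(\sigma)$, reverses dimension via $\dim\widehat{\sigma}=n-\dim\sigma$, and turns each restriction map $\rho_{\tau,\sigma}$ into the extension map $r_{\widehat{\tau},\widehat{\sigma}}$. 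I would then replace each side by its computational complex: by \cite{shepard} (see Section~\ref{subsec:cell-sheaf-cohomology}) the sheaf cohomology $H^{i}(X;F)$ is the cohomology of the compactly supported cochain complex of Definition~\ref{defn:cpt-cohom}, while by the theorem of Section~\ref{subsec:cosheaf-homology-colimit} the cosheaf homology $H_{\bullet}(\widehat{X};\cosheaf{F})$ is the homology of the Borel--Moore complex of Definition~\ref{defn:BM-hom}. Since $X$ is compact there are no cells of non-compact closure, so the ordinary and compactly supported (respectively Borel--Moore) theories coincide and no cells are discarded.

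The heart of the matter is then a term-by-term comparison. In cohomological degree $i$ the cochain group is $\bigoplus_{\dim\sigma=i}F(\sigma)$, while in homological degree $n-i$ the Borel--Moore chain group of $\cosheaf{F}$ is $\bigoplus_{\dim\widehat{\sigma}=n-i}\cosheaf{F}(\widehat{\sigma})=\bigoplus_{\dim\sigma=i}F(\sigma)$; these are literally the same vector space. The cohomology differential $\delta^{i}_{c}$ raises $\dim\sigma$ by one through $\rho_{\tau,\sigma}$ for $\sigma\leq_1\tau$, and the Borel--Moore boundary lowers $\dim\widehat{\sigma}$ by one (hence raises $\dim\sigma$ by one) through the dual extension map $r_{\widehat{\tau},\widehat{\sigma}}=\rho_{\tau,\sigma}$, so the two differentials run in the same direction between identical groups and use the same linear maps. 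I would conclude that the cochain complex $(C^{\bullet}_{c}(X;F),\delta_{c})$ is precisely the relabeled Borel--Moore complex $(C^{BM}_{n-\bullet}(\widehat{X};\cosheaf{F}),\partial)$, whence $H^{i}(X;F)\cong H_{n-i}(\widehat{X};\cosheaf{F})\cong H_{n-i}(X;F)$.

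The step I expect to be the main obstacle is the bookkeeping of signs and orientations: matching the signed incidence numbers $[\sigma:\tau]$ on $X$ with the incidence numbers $[\widehat{\tau}:\widehat{\sigma}]$ on the dual structure, since Definition~\ref{defn:signed-reln} only fixes these up to choices. This is where the manifold hypothesis is genuinely used, because an orientation of $X$ is what induces compatible orientations of the dual cells so that the two signed incidence relations agree up to a uniform sign (in the unoriented case one twists by the orientation sheaf, but the underlying combinatorial identification of the complexes is unchanged). Everything else is formal: the identification of the two computational complexes is exactly the observation that ``the two diagrams of vector spaces are the same'' used in Proposition~\ref{prop:homological-poincare-duality}, now read one degree-reversal further so as to trade sheaf cohomology for sheaf homology.
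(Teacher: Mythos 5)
Your proposal is correct and is essentially the paper's own argument: both proofs identify the compactly supported cochain complex $C^{\bullet}_c(X;F)$ with the degree-reversed Borel--Moore complex $C^{BM}_{n-\bullet}(\tilde{X};\cosheaf{F})$ on the dual cell structure and then invoke the same-diagram observation of Proposition~\ref{thm:mfld_sheaf_cosheaf} (packaged as Proposition~\ref{prop:homological-poincare-duality}) to trade cosheaf homology on $\tilde{X}$ for sheaf homology on $X$, with compactness removing the distinction between ordinary and compactly supported (resp.\ Borel--Moore) theories. The differences are organizational rather than substantive---you bypass the paper's intermediate step $H^i_c(X;F)\cong H_{-i}(X;\equivp(F))$ by comparing the computational complexes directly, and your explicit flagging of the incidence-sign and orientation bookkeeping is, if anything, more careful than the paper's proof, which simply declares the two diagrams to be ``the same in every possible way.''
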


\begin{proof}
  We repeat the first step of the proof of the previous theorem.
  By applying $\equivp$ to $F$ we get a complex of cosheaves. 
  Pushing forward to a point yields a complex whose (co)homology is the compactly supported cohomology of the sheaf $F$. 
  Now we recognize the formula~\ref{defn:BM-hom} for the Borel-Moore homology of a cosheaf defined on the dual cell structure.
  \[
    \xymatrix{0 \ar[r] & \ar@{~>}[d] \bigoplus\limits_{v\in X}F(v) \ar[r] & \ar@{~>}[d] \bigoplus\limits_{e\in X}F(e) \ar[r] & \ar@{~>}[d] \bigoplus\limits_{\sigma\in X} F(\sigma) \ar[r] & \cdots \\
    0 \ar[r] & \bigoplus\limits_{\tilde{v}\in \tilde{X}}\cosheaf{F}(\tilde{v}) \ar[r] & \bigoplus\limits_{\tilde{e}\in \tilde{X}}\cosheaf{F}(\tilde{e}) \ar[r] & \bigoplus\limits_{\tilde{\sigma}\in \tilde{X}}\cosheaf{F}(\tilde{\sigma}) \ar[r] & \cdots}
  \]
  Taking the homology of the bottom row is the usual formula for the Borel-Moore homology of a cellular cosheaf except the top dimensional cells are place in degree 0, the $n-1$ cells in degree -1, and so on. Everything being shifted by $n=\dim X$ we get the isomorphism
  \[
    H_{-i}(X;\equivp(F))\cong H^{BM}_{n-i}(\tilde{X};\cosheaf{F}).
  \]
  However, we already observed in Theorem~\ref{thm:mfld_sheaf_cosheaf} that the diagrams $\cosheaf{F}$ on $\tilde{X}$ and $F$ on $X$ are the same in every possible way, so in particular sheaf homology of $F$ must coincide with cosheaf homology of $\cosheaf{F}$. Thus using compactness to drop the Borel-Moore label and chaining together the previous theorem we get
  \[
    H^i(X;F)\cong H_{-i}(X;\equivp(F))\cong H_{n-i}(\tilde{X};\cosheaf{F})\cong H_{n-i}(X;F).
  \]
\end{proof}

\section{Compactly Supported Cohomology as a Coend}
\label{sec:coend}

We conclude with a broader discussion of why Verdier duality might be rightly viewed as an exchange of sheaves and cosheaves.
The inspiration for this discussion stems from the following result of Pitts~\cite{pitts}.

\begin{thm}[Pitts' Theorem]
  Let $X$ be a topological space. Every colimit-preserving functor on sheaves arises by forming the coend with a cosheaf, i.e. 
  $$\Coshv(X;\Set)\cong\Fun^{\text{cocts}}(\Shv(X;\Set),\Set).$$
\end{thm}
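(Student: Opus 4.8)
The plan is to exhibit $\Shv(X;\Set)$ as a reflective localization of the presheaf topos and then chain two formal equivalences: the universal property of presheaves as a free cocompletion, and the identification of the cosheaf axiom with the condition of inverting covering morphisms. Throughout, write $C:=\Open(X)$, let $\Preshv(X;\Set):=[C^{\op},\Set]$ be the presheaf category with Yoneda embedding $y:C\to\Preshv(X;\Set)$, $y(U)=\Hom_C(-,U)$, and recall that a precosheaf is exactly a functor $C\to\Set$. Sheafification presents $\Shv(X;\Set)$ as a reflective localization
\[
L:\Preshv(X;\Set)\rightleftarrows\Shv(X;\Set):i,\qquad L\dashv i,
\]
with $i$ fully faithful and $L$ cocontinuous, obtained by inverting the set $J$ of covering-sieve inclusions $S_{\covU}\hookrightarrow y(U)$, one for each cover $\covU$ of each open $U$; indeed a presheaf is a sheaf precisely when it is right-orthogonal to $J$, since $\Hom_{\Preshv}(S_{\covU},G)$ is the set of matching families.

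First I would invoke the free cocompletion. Since $\Preshv(X;\Set)$ is the free cocompletion of $C$, restriction along $y$ is an equivalence
\[
\Fun^{\text{cocts}}(\Preshv(X;\Set),\Set)\ \xrightarrow{\ \simeq\ }\ [C,\Set],
\]
whose inverse is left Kan extension along $y$, sending a precosheaf $\widehat{F}:C\to\Set$ to the coend $G\mapsto\int^{U\in C}\widehat{F}(U)\times G(U)$ (here one uses $\Hom_{\Preshv}(yU,G)\cong G(U)$). This coend is cocontinuous in $G$, as coends and the functor $(-)\times S$ both preserve colimits in $\Set$, and it is precisely the tensor product of $\widehat{F}$ with $G$ named in the statement.

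Next I would use the localization. For any cocomplete target, precomposition with $L$ identifies the cocontinuous functors on $\Shv(X;\Set)$ with those cocontinuous functors on $\Preshv(X;\Set)$ that invert $J$:
\[
(-)\circ L:\ \Fun^{\text{cocts}}(\Shv(X;\Set),\Set)\ \xrightarrow{\ \simeq\ }\ \Fun^{\text{cocts}}_{J}(\Preshv(X;\Set),\Set).
\]
A cocontinuous $\Phi$ factors uniquely through $L$ exactly when it sends the localization unit maps to isomorphisms, and cocontinuity lets one test this on the generating family $J$. Combining this with the previous equivalence, a cocontinuous $\Phi$ corresponds to the precosheaf $\widehat{F}=\Phi\circ y$, and $\Phi$ inverts $J$ iff it inverts each $S_{\covU}\hookrightarrow y(U)$. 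Writing $S_{\covU}$ as the colimit $\colim(y\circ\iota_{\covU})$ of representables over the nerve and using cocontinuity of $\Phi$, the map $\Phi(S_{\covU})\to\Phi(y(U))$ becomes the canonical arrow $\widehat{F}[\covU]=\colim(\widehat{F}\circ\iota_{\covU})\to\widehat{F}(U)$ of the cosheaf axiom. Hence $\Phi$ inverts $J$ iff $\widehat{F}$ is a cosheaf, and assembling the two equivalences yields
\[
\Coshv(X;\Set)\ \simeq\ \Fun^{\text{cocts}}(\Shv(X;\Set),\Set),\qquad \widehat{F}\ \longmapsto\ \Big(G\mapsto\int^{U}\widehat{F}(U)\times G(U)\Big).
\]

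The hard part will be the interface between the last two steps: showing that, for cocontinuous functors, inverting the entire class of maps killed by sheafification reduces to inverting the single generating family $J$, and verifying that the covering sieve $S_{\covU}$ is genuinely the nerve-indexed colimit of representables matching $\iota_{\covU}$, so that $\Phi(S_{\covU})$ computes $\widehat{F}[\covU]$. This is the real content; the free-cocompletion equivalence is purely formal, and once the generating family is pinned down the cosheaf axiom falls out by inspection.
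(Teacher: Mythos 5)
First, a point of comparison that constrains this review: the paper does not prove this statement. It is imported verbatim from Pitts with a citation, and the surrounding text even remarks that the enriched analogue is absent from the literature. So there is no in-paper proof to match your argument against, and your plan must be judged on its own terms. Its architecture --- free cocompletion of $\Open(X)$ plus the universal property of a cocontinuous localization --- is the standard folklore route and is sound. However, the step you flag as ``to be verified'' is in fact false as stated: the covering sieve $S_{\covU}$ is \emph{not} the nerve-indexed colimit of representables. The canonical comparison $\colim (y\circ \iota_{\covU})\to S_{\covU}$ is bijective on sections over every nonempty $V$ (the subposet $\{I\in N(\covU): V\subseteq U_I\}$ is directed under union, hence connected), but fails over $V=\emptyset$: for $X=\{1,2\}$ discrete and the cover of $X$ by its two singletons, the nerve is the discrete two-object poset, so the colimit has two sections over $\emptyset$ while $S_{\covU}(\emptyset)$ is a singleton. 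The repair is easy and you should make it explicit: discard sieves and take the generating family $J$ to be the comparison maps $\colim(y\circ\iota_{\covU})\to yU$ themselves. Since
\[
\Hom_{\Preshv}\bigl(\colim(y\circ\iota_{\covU}),\,G\bigr)\;\cong\;\varprojlim\,(G\circ\iota^{\op}_{\covU})\;=\;G[\covU],
\]
orthogonality to this family is \emph{literally} the paper's sheaf axiom (the empty cover of $\emptyset$ forces $G(\emptyset)=\ast$, after which this orthogonality class coincides with the sieve-orthogonal presheaves), and your identification of $\Phi$ applied to the generator with the cosheaf comparison map $\hF[\covU]\to\hF(U)$ then goes through by cocontinuity exactly as you wrote it.

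Your other flagged gap --- that a cocontinuous functor inverting the generating set $J$ inverts every map killed by sheafification --- is the real content and remains unexecuted in your sketch. The standard completion: the class of maps inverted by a cocontinuous $\Phi$ is closed under pushout, coproducts, transfinite composition, and formation of codiagonals; the unit $P\to iLP$ of the reflection onto the $J$-orthogonal objects can be produced by the small object argument as a transfinite composite of pushouts of coproducts of maps in $J$ and their codiagonals; hence $\Phi$ inverts all units, which gives essential surjectivity of $(-)\circ L$ onto the $J$-inverting cocontinuous functors. Full faithfulness needs its own two lines: $Li\cong\id$ shows restriction along $L$ of transformations is split, and naturality along the (inverted) units shows a transformation $\Psi_1 L\Rightarrow \Psi_2 L$ is determined by its restriction to sheaves. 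With these two patches --- the corrected generating family and the saturation argument --- your proof is complete, and it is a legitimately self-contained alternative to citing Pitts, whose original argument lives at the level of topos theory rather than this elementary locally-presentable-localization calculus.
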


Although an enriched version of Pitts' theorem---one valid for sheaves and cosheaves of vector spaces---does not appear in the literature, the result should be valid for (co)sheaves valued in any monoidal category~$\dat$.
Let us assume for the moment that such a generalization holds.
Recall that the first formulation of Verdier duality was the statement that $Rf_!$ admits a right adjoint on the level of the derived category. 
This adjunction is sometimes called \define{global Verdier duality} and says that we have the following natural isomorphism:
\[
  \Hom(Rf_!F,G)\cong \Hom(F,f^!G)
\]
By applying the fact that left adjoints are cocontinuous one is led to believe, in light of Pitts' theorem, that there should be a cosheaf that realizes the operation of taking derived pushforward with compact supports.
The main result of this section is that we can use the derived equivalence formula~\ref{defn:equiv-fun-projectives} to affirm this belief directly in the cellular world.

In preparation for this result, we describe the coend in the setting of pre(co)sheaves of vector spaces and provide a few examples.

\begin{defn}[Tensoring Sheaves with Cosheaves]
  Let $X$ be a topological space. Given a precosheaf of vector spaces $\cosheaf{G}$ and a presheaf of vector spaces $F$ and a pair of open sets $U\subseteq V$ we can form the following diagram:
  \[
    \xymatrix{ & \hG(V)\otimes F(V) \\ \hG(U)\otimes F(V) \ar[ru]^{r^G_{V,U}\otimes \id} \ar[rd]_{\id\otimes\rho_{U,V}^F} & \\ & \hG(U)\otimes F(U)}
  \]
  By taking the coproduct of such diagrams over all pairs $U\subseteq V$, we define the \define{coend} or \define{tensor product over $X$} of $\cosheaf{G}$ with $F$ to be the following coequalizer:
  \[
  \bigsqcup_{U\subseteq V}\hG(U)\otimes F(V) \rightrightarrows \bigsqcup_W \hG(W)\otimes F(W) \rightarrow \int^{\Open(X)} \hG(W)\otimes F(W)=:\hG\otimes_X F
  \]
\end{defn}

\begin{ex}[Stalks and Skyscraper Cosheaf]
The \define{skyscraper cosheaf} at $x$ is the cosheaf
\[
  \skycshv_x(U)=\left\{ \begin{array}{ll} \Bbbk & \textrm{if $x\in U$}\\
  0 & \text{o.w.}\end{array} \right.
\]
With some thought one can show that the tensor product of any presheaf $F$ with the cosheaf $\skycshv_x$ yields
\[
  \skycshv_x\otimes_X F \cong F_x.
\]
By letting $F$ range over all sheaves one gets a functor
\[
  \skycshv_x\otimes_X - :\Shv(X) \to \Vect \qquad F \rightsquigarrow F_x.
\]
To summarize, we have observed that the operation of taking stalks is equivalent to the process of tensoring with the skyscraper cosheaf.
\end{ex}

\begin{prop}
Let $X$ be the face-relation poset of a cell complex. In the Alexandrov topology on $X$ taking stalks at a cell $\sigma$ is equivalent to tensoring with the elementary projective cosheaf $[\cosheaf{\sigma}]$, i.e.
\[
  [\cosheaf{\sigma}] \otimes_{X} - :\Shv(X) \to \Vect \qquad F \rightsquigarrow F(\sigma).
\]
\end{prop}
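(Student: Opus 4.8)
The plan is to reduce the coend---set up in the preceding definition over all of $\Open(X)$---to a coend over the face poset $X$ itself, and then to collapse that coend onto the single value $F(\sigma)$. By Corollary~\ref{cor:equivalence} the sheaf $F$ and the cosheaf $[\cosheaf{\sigma}]$ are determined by their values on the basic opens $U_\tau$, and every open set is the union of the basic opens it contains. Since a coend is a particular colimit, I would first argue that the basic opens are cofinal for it and hence that
\[
[\cosheaf{\sigma}]\otimes_X F \;\cong\; \int^{\tau\in X}[\cosheaf{\sigma}](\tau)\otimes F(\tau),
\]
the coequalizer of $\bigoplus_{\tau\leq\tau'}[\cosheaf{\sigma}](\tau')\otimes F(\tau)\rightrightarrows\bigoplus_{\tau}[\cosheaf{\sigma}](\tau)\otimes F(\tau)$ built from the extension maps of $[\cosheaf{\sigma}]$ on one side and the restriction maps of $F$ on the other.

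Next I would feed in the ``supported on the closure'' description: $[\cosheaf{\sigma}]$ is the constant cosheaf with value $\Bbbk$ on the closure $\bar{\sigma}=\{\tau\mid\tau\leq\sigma\}$ and $0$ elsewhere, all of whose extension maps between nonzero terms are the identity. Every summand indexed by a cell $\tau\not\leq\sigma$ then vanishes, and every relation involving such a cell is trivial, so the coend restricts to the full subposet $\bar{\sigma}$ and becomes $\Bbbk_{\bar{\sigma}}\otimes_{\bar{\sigma}}(F|_{\bar{\sigma}})$. Tensoring a covariant functor against the constant cosheaf $\Bbbk$ is exactly taking the colimit, so this is $\colim_{\bar{\sigma}}F|_{\bar{\sigma}}$; and since $\bar{\sigma}$ has $\sigma$ as a terminal object, the colimit of a diagram indexed by it is simply the value there, giving $\colim_{\bar{\sigma}}F|_{\bar{\sigma}}\cong F(\sigma)$.

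Finally I would note that $F(\sigma)$ is precisely the stalk of $F$ at $\sigma$ in the Alexandrov topology, since $U_\sigma$ is the smallest open containing $\sigma$ and hence $F_\sigma=\colim_{U\ni\sigma}F(U)=F(U_\sigma)=F(\sigma)$; indeed $[\cosheaf{\sigma}]$ agrees as a functor on $X^{\op}$ with the skyscraper cosheaf $\skycshv_\sigma$, so the statement is the cellular specialization of the Stalks-and-Skyscraper example. Functoriality is then immediate, as a sheaf map $F\to G$ induces a map of coends that the identifications above turn into the evaluation $F(\sigma)\to G(\sigma)$, yielding the functor $[\cosheaf{\sigma}]\otimes_X-\colon\Shv(X)\to\Vect$. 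The one genuinely delicate point is the opening reduction---that the coend over $\Open(X)$ may be computed over the basic opens alone---which rests on the cofinality of the basic opens implicit in Proposition~\ref{prop:equivalence} and Corollary~\ref{cor:equivalence}; once it is in place, the rest is the routine collapse of a colimit onto a terminal object.
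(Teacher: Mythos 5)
Your proof is correct and follows essentially the same route as the paper's: the paper's own argument is the one-line observation that the coend restricts to the closure $\bar{\sigma}$, where the diagram has terminal object $\sigma$ and hence collapses to $F(\sigma)$. Your additional steps---reducing the coend over $\Open(X)$ to the cell-level coend via the Kan-extension description of Proposition~\ref{prop:equivalence}, and identifying the tensor with the constant cosheaf on $\bar{\sigma}$ as a colimit---merely make explicit what the paper leaves implicit, and are carried out correctly.
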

\begin{proof}
Observe that the coend restricts the non-zero values of $F$ to the closure of the cell $\sigma$, but this restricted diagram has a terminal object given by $F(\sigma)$ and so the coend returns $F(\sigma)$.
\end{proof}

This allows us to state the main theorem of this section.

\begin{thm}
  Compactly supported cellular sheaf cohomology, as defined in~\ref{subsec:cell-sheaf-cohomology}, is equivalent to tensoring with the image of the constant sheaf through the derived equivalence, i.e.
  \[
    \equivp(k_X)= \bigoplus_{v\in X} [\widehat{v}] \to \bigoplus_{e\in X} [\widehat{e}] \to \bigoplus_{\sigma\in X} [\widehat{\sigma}] \to \cdots.
  \]
\end{thm}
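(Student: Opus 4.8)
The plan is to compute the complex $\equivp(k_X)\otimes_X F$ one cohomological degree at a time and recognize the result, differentials included, as the compactly supported cochain complex $C^{\bullet}_c(X;F)$ of Definition~\ref{defn:cpt-cohom}. Because the two complexes will be shown to agree on the nose, no exactness or spectral-sequence input is required: taking cohomology at the end yields $H^{\bullet}_c(X;F)$, which is the asserted equivalence.

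First I would record that the tensor product $-\otimes_X F$ is a coend, hence a colimit construction, and therefore commutes with the direct sums that appear in each degree of $\equivp(k_X)$. Using the displayed formula for $\equivp(k_X)$ from Definition~\ref{defn:equiv-fun-projectives}, the degree-$n$ term becomes
\[
\Bigl(\bigoplus_{\sigma\in X^n}[\widehat{\sigma}]\Bigr)\otimes_X F\;\cong\;\bigoplus_{\sigma\in X^n}\bigl([\widehat{\sigma}]\otimes_X F\bigr).
\]
Now I invoke the preceding proposition, which identifies $[\widehat{\sigma}]\otimes_X F\cong F(\sigma)$ via the observation that the coend restricts $F$ to the closure $\bar{\sigma}$, a poset with terminal object $\sigma$, so that $\colim_{\tau\leq\sigma}F=F(\sigma)$. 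Degreewise this produces $\bigoplus_{\sigma\in X^n}F(\sigma)=C^n_c(X;F)$, matching the grading (vertices in degree $0$, edges in degree $1$, and so on) exactly.

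Next I would match the differentials. A summand of the differential of $\equivp(k_X)$ is $[\sigma:\gamma]$ times the canonical morphism of elementary projective cosheaves $[\widehat{\sigma}]\to[\widehat{\gamma}]$ for $\sigma\leq_1\gamma$; since the restriction maps of the constant sheaf $k_X$ are identities, this is all the data. Applying $-\otimes_X F$ and using the terminal-object identification above, this morphism induces the canonical map $F(\sigma)=\colim_{\tau\leq\sigma}F\to\colim_{\tau\leq\gamma}F=F(\gamma)$, which is precisely the restriction map $\rho_{\gamma,\sigma}$. Hence the induced differential on $\bigoplus_{\sigma}F(\sigma)$ is $\sum_{\sigma\leq_1\gamma}[\sigma:\gamma]\rho_{\gamma,\sigma}=\delta^n_c$, exactly the compactly supported differential of Definition~\ref{defn:cpt-cohom}.

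The step I expect to require the most care is this last compatibility: one must check that the canonical cosheaf morphism between elementary projectives becomes, after forming the coend, the sheaf restriction map carrying the correct signed incidence coefficient. This amounts to upgrading the identification $[\widehat{\sigma}]\otimes_X F\cong F(\sigma)$ of the preceding proposition to a statement that is natural in the cosheaf argument, i.e. that the inclusion of closures $\bar{\sigma}\hookrightarrow\bar{\gamma}$ realizes $\rho_{\gamma,\sigma}$ on colimits. Once this naturality is in hand, $\equivp(k_X)\otimes_X F$ and $C^{\bullet}_c(X;F)$ coincide as cochain complexes, and taking cohomology gives $H^{\bullet}_c(X;F)\cong H^{\bullet}\bigl(\equivp(k_X)\otimes_X F\bigr)$.
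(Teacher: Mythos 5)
Your proposal is correct and follows essentially the same route as the paper: apply the preceding proposition identifying $[\widehat{\sigma}]\otimes_X F\cong F(\sigma)$ degreewise to obtain the complex $\bigoplus_{v} F(v)\to\bigoplus_{e} F(e)\to\cdots$, which is $C^{\bullet}_c(X;F)$. The only difference is that you spell out the naturality check that the induced differentials are $\sum[\sigma:\gamma]\rho_{\gamma,\sigma}$, a point the paper's proof leaves implicit in its ``is by definition'' step.
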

\begin{proof}
  The proof is immediate given the previous description of taking stalks. Observe that
  \[
    \equivp(k_X)\otimes_X F \cong \bigoplus_{v\in X} F(v) \to \bigoplus_{e\in X} F(e) \to \bigoplus_{\sigma\in X} F(\sigma) \to \cdots
  \]
  is by definition quasi-isomorphic to compactly supported cohomology of a cellular sheaf $F$.
\end{proof}

This perspective is especially satisfying for the following reason: it makes transparent how the underlying topology of the space $X$ is coupled with the cohomology of a sheaf $F$. Compactly supported sheaf cohomology arises by tensoring with the complex of cosheaves that computes the Borel-Moore homology of the underlying space.

\newpage

\bibliographystyle{plain}
\bibliography{duality-arxiv}

\begin{thebibliography}{10}

\bibitem{alex-dr}
P.~Alexandroff.
\newblock Diskrete r\"aume.
\newblock {\em Mat. Sb}, 2:501--518, 1937.

\bibitem{MO-proj-sheaves}
Valery Alexeev, David Treumann, and Jon Woolf.
\newblock ``when are there enough projective sheaves on a space x?''.
\newblock Accessed July 21, 2016
  \url{http://mathoverflow.net/questions/5378/when-are-there-enough-projective-sheaves-on-a-space-x}.

\bibitem{aluffi}
Paolo Aluffi.
\newblock {\em Algebra. Chapter 0}, volume 104.
\newblock Graduate Studies in Mathematics American Mathematical Society, 2009.

\bibitem{AFT}
D.~{Ayala}, J.~{Francis}, and H.~L. {Tanaka}.
\newblock {Local structures on stratified spaces}.
\newblock {\em ArXiv e-prints}, September 2014.

\bibitem{borel1960homology}
Armand Borel and James~C Moore.
\newblock Homology theory for locally compact spaces.
\newblock {\em The Michigan Mathematical Journal}, 7(2):137--159, 1960.

\bibitem{Bredon}
Glen Bredon.
\newblock {\em Sheaf Theory}, volume 170 of {\em Graduate Texts in
  Mathematics}.
\newblock Springer-Verlag, 2nd edition, 1997.

\bibitem{cooke1967homology}
George-E Cooke, Ross-L Finney, and Norman-E Steenrod.
\newblock {\em Homology of cell complexes.}
\newblock Princeton University Press, University of Tokyo Press, 1967.

\bibitem{curry-patel-CCC}
J.~{Curry} and A.~{Patel}.
\newblock {Classification of Constructible Cosheaves}.
\newblock {\em ArXiv e-prints}, March 2016.
\newblock \url{http://arxiv.org/abs/1603.01587}.

\bibitem{curry2014sheaves}
Justin~Michael Curry.
\newblock {\em SHEAVES, COSHEAVES AND APPLICATIONS}.
\newblock PhD thesis, The University of Pennsylvania, 2014.
\newblock \url{http://arxiv.org/abs/1303.3255}.

\bibitem{de2016categorified}
Vin De~Silva, Elizabeth Munch, and Amit Patel.
\newblock Categorified reeb graphs.
\newblock {\em Discrete \& Computational Geometry}, pages 1--53.

\bibitem{kash-rh}
Masaki Kashiwara.
\newblock The riemann-hilbert problem for holonomic systems.
\newblock {\em Publ. RIMS, Kyoto Univ.}, 20:319--365, 1984.

\bibitem{ladkani2008homological}
Sefi Ladkani.
\newblock {\em Homological Properties of Finite Partially Ordered Sets}.
\newblock PhD thesis, Hebrew University of Jerusalem, 2008.

\bibitem{ladkani2008derived}
Sefi Ladkani.
\newblock On derived equivalences of categories of sheaves over finite posets.
\newblock {\em Journal of Pure and Applied Algebra}, 212(2):435--451, 2008.

\bibitem{lundell1969topology}
Albert~T Lundell and Stephen Weingram.
\newblock {\em The Topology of CW Complexes, The University Series in Higher
  Mathematics}.
\newblock Van Nostrand Reinhold Company, New York, 1969.

\bibitem{lurie-VD}
Jacob Lurie.
\newblock Verdier duality (lecture 21).
\newblock Accessed July 22, 2016
  \url{http://www.math.harvard.edu/~lurie/282ynotes/LectureXXI-Verdier.pdf}.

\bibitem{dt-lag}
Davesh Maulik and David Treumann.
\newblock Constructible functions and lagrangian cycles on orbifolds.
\newblock \url{http://arxiv.org/abs/1110.3866}, 2012.

\bibitem{pitts}
A.~M. Pitts.
\newblock On product and change of base for toposes.
\newblock {\em Cahiers de Top et Geom. Diff. Categoriques}, 26(1):43--61, 1985.

\bibitem{schneider-vd}
Peter Schneider.
\newblock Verdier duality on the building.
\newblock {\em J. reine angew Math.}, 494:205--218, 1998.

\bibitem{shepard}
Allen Shepard.
\newblock {\em A Cellular Description of the Derived Category of a Stratified
  Space}.
\newblock PhD thesis, Brown University PhD Thesis, May 1985.

\bibitem{treumann-stacks}
David Treumann.
\newblock Exit paths and constructible stacks.
\newblock {\em Compositio Mathematica}, 145:1504--1532, 2009.
\newblock \url{http://arxiv.org/abs/0708.0659v1}.

\bibitem{vybornov-triang}
Maxim Vybornov.
\newblock {\em Sheaves on Triangulated Spaces and Koszul Duality}.
\newblock PhD thesis, Yale University, 1999.
\newblock condensed version available at
  \url{http://arxiv.org/abs/math/9910150}.

\bibitem{weibel}
Charles~A. Weibel.
\newblock {\em An Introduction to Homological Algebra}, volume~38 of {\em
  Cambridge studies in advanced mathematics}.
\newblock Cambridge University Press, 1994.

\bibitem{woolf}
Jonathan Woolf.
\newblock The fundamental category of a stratified space.
\newblock {\em Journal of Homotopy and Related Structures}, 4(1):359--387,
  2009.
\newblock arXiv:0811.2580.

\bibitem{dihom_1}
Christopher Zeeman.
\newblock Dihomology. i. relations between homology theories.
\newblock {\em Proc. London Math. Soc.}, 3(12):609--638, 1962.

\end{thebibliography}

\end{document}